\newtheorem{thm}{Theorem}
\newtheorem{cor}{Corollary}
\newtheorem{lem}{Lemma}[section]
\newtheorem{prop}[lem]{Proposition}
\newtheorem{rem}[lem]{Remark}
\newcommand{\lgra}{\longrightarrow}
\newcommand{\trace}{\mathrm{tr\,}}
\newcommand{\C}{\mathbb{C}}
\newcommand{\HH}{\mathbb{H}}
\newcommand{\HA}{\mathbb{H}^{\mathcal{A}}}
\newcommand{\tch}{\hat}
\newcommand{\bigtch}{\widehat}
\newcommand{\unitquat}{\textit{1}}
\newcommand{\R}{\mathbb{R}}
\newcommand{\Q}{\mathbb{Q}}
\newcommand{\Z}{\mathbb{Z}}
\newcommand{\beqt}{\begin{equation}}  \newcommand{\eeqt}{\end{equation}}
\newcommand{\bal}{\begin{align}}      \newcommand{\eal}{\end{align}}
\newcommand{\ba}{\begin{array}}      \newcommand{\ea}{\end{array}}
\newcommand{\bc}{\begin{center}}     \newcommand{\ec}{\end{center}}
\newcommand{\be}{\begin{enumerate}}  \newcommand{\ee}{\end{enumerate}}
\newcommand{\beq}{\begin{eqnarray}}  \newcommand{\eeq}{\end{eqnarray}}
\newcommand{\beQ}{\begin{eqnarray*}} \newcommand{\eeQ}{\end{eqnarray*}}
\newcommand{\bi}{\begin{itemize}}    \newcommand{\ei}{\end{itemize}}
\newcommand{\bt}{\begin{tabular}}    \newcommand{\et}{\end{tabular}}
\newcommand{\finpreuve}{\hfill\square\\}
\begin{document}
\title{A spinor description of flat surfaces in $\R^4$}
\author{Pierre Bayard}
\thanks{The author was supported by the project CIC-UMSNH 4.5.\\ \indent \textbf{MSC2010}: 53C27, 53C42, 53A07}
\address{Pierre Bayard: Instituto de F\'{\i}sica y Matem\'aticas. U.M.S.N.H. Ciudad Universitaria. CP. 58040 Morelia, Michoac\'an, Mexico.}
\email{bayard@ifm.umich.mx}

\begin{abstract}
We describe the flat surfaces with flat normal bundle and regular Gauss map immersed in $\R^4$ using spinors and Lorentz numbers. We obtain a new proof of the local structure of these surfaces. We also study the flat tori in the sphere $S^3$ and obtain a new representation formula. We then deduce new proofs of their global structure, and of the global structure of their Gauss map image.
\end{abstract}
\maketitle
\markboth{PIERRE BAYARD}{A spinor description of flat surfaces in $\R^4$}

\section*{Introduction}

In this paper we are interested in flat surfaces with flat normal bundle in $\R^4$ and in their description using spinors. We show how a general spinor representation formula of surfaces in $\R^4$ permits to obtain the following important results concerning their local and global structure: 
\begin{enumerate}
\item locally a flat surface with flat normal bundle and regular Gauss map depends on four functions of one variable \cite{DT, CD};
\item a flat torus immersed in $S^3$ is a product of two horizontal closed curves in $S^3\subset \HH$ \cite{B,Sa,S,K}. 
\end{enumerate}
With this formalism we also obtain the structure of the Gauss map image of a flat torus in $S^3$ \cite{E,W}.
\\

This approach permits a unified treatment of numerous questions concerning flat surfaces with flat normal bundle in $\R^4:$ the main idea is to write the general spinor representation formula obtained in \cite{BLR} in parallel frames adapted to the surfaces; with the spinor construction and the representation formula at hand, the proofs then appear to be quite simple. 
\\

In the paper, the principal object attached to a flat immersion with flat normal bundle in $\R^4$ is a map $g,$ which represents a constant spinor field in a moving frame adapted to the immersion; as a consequence of the spinor representation formula, it appears that $g,$ together with the metric of the surface, determines the immersion (with an explicit formula). Moreover, in the special case of a surface in $S^3,$ $g$ also determines the metric of the surface and thus entirely determines the immersion. The structure of the flat immersions with flat normal bundle in $\R^4$ thus crucially relies on the structure of $g.$  In the paper, the map $g$ is considered as a curve into a sphere, parameterized by the Lorentz numbers. We show that its arc length parametrization yields natural coordinates on the surface which generalize the asymptotic Tchebychef net of the flat surfaces in $S^3$ (Theorem \ref{thm chart A} and Remark \ref{asymptotic net}); as a corollary of the spinor representation formula, we then obtain the local structure of the flat immersions with flat normal bundle in $\R^4$  (Theorem \ref{thm local description} and Corollary \ref{cor local description}). When the surface is compact, we show that the arc length parametrization of $g$ gives in fact a global parametrization of the surface (Proposition \ref{prop unif}), and, as a corollary, we obtain a new representation of the flat tori in $S^3$ (Theorem \ref{th formula F g}); we then observe that the two natural projections of $g$ on its positive and negative parts are in fact the two curves in the Kitagawa representation of the torus defined by $g$ (Theorem \ref{thm bianchi} and Section \ref{section Kitagawa representation}).  Finally, a Hopf projection of $g$ gives the Gauss map of the surface, which permits to study easily the structure of the Gauss map image of the flat tori in $S^3$ (Corollary \ref{corollary gauss map}).
\\

We previously used this approach in \cite{Bay} to study flat surfaces with flat normal bundle and regular Gauss map in 4-dimensional Minkowski space $\R^{1,3}.$ The present paper uses similar ideas, and gives applications of the spinor representation formula contained in \cite{BLR}.
\\

We quote papers concerning flat surfaces with flat normal bundle in $\R^4.$ In \cite{B} Bianchi constructed flat surfaces in $S^3$ as a product, in the quaternions, of two special curves. Sasaki \cite{Sa} and Spivak \cite{S} classified the complete flat surfaces in $S^3$. In \cite{K}, Kitagawa gave a method to construct all the flat tori in $S^3:$ a flat torus is a product of two curves in the unit quaternions; these curves are constructed as asymptotic lifts of periodic curves in $S^2$ (the Kitagawa representation). In \cite{GM}, G\'alvez and Mira constructed non-trivial flat tori with flat normal bundle in $\R^4$ which are not contained in any 3-sphere, which raises the question of the construction of all the flat tori with flat normal bundle in $\R^4.$ Enomoto \cite{E} and Weiner \cite{W} described the flat tori in $S^3$ in terms of their Gauss map.
\\

The outline of the paper is as follows: in Section \ref{section clifford algebra} we describe the Clifford algebra of $\R^4$ and its spinor representation using quaternions and Lorentz numbers, in Section \ref{section twisted spinor bundle} we describe the spinor bundle twisted by a bundle of rank two on a Riemannian surface and in Section \ref{section spinor representation} the spinor representation of a surface in  $\R^4,$ rewriting the spinor representation formula of \cite{BLR} using Lorentz numbers. Using the same formalism, we then describe in Section \ref{section gauss map} the Gauss map of a surface in $\R^4,$ together with its relation to the spinor representation formula of the surface. Section \ref{section lorentz surfaces} is devoted to Lorentz surfaces and Lorentz numbers. We then give the local description of the flat surfaces with flat normal bundle and regular Gauss map in $\R^4$ in Section \ref{section local description}, and finally describe the flat tori in $S^3$ and their Gauss map in Section \ref{section flat tori S3}. An appendix on Lorentz numbers and quaternions ends the paper. 
\section{Clifford algebra of $\R^4$ and spin representations with Lorentz numbers}\label{section clifford algebra}
In this section, we describe the Clifford algebras and spinors of $\R^4$ using Lorentz numbers and quaternions.
\subsection{Lorentz numbers and quaternions}\label{subsection quaternions}

The algebra of Lorentz numbers is the algebra
$$\mathcal{A}=\{x+\sigma y,\ x,y\in\mathbb{R}\},$$
with the usual operations, where $\sigma$ is a formal element satisfying $\sigma^2=1.$ We consider the quaternions with coefficients in $\mathcal{A},$
$$\HA:=\{a_0\unitquat+a_1 I+a_2J+a_3K,\ a_0,a_1,a_2,a_3\in\mathcal{A}\},$$
where $I,J,K$ satisfy
$$I^2=J^2=K^2=-\unitquat,\hspace{1cm} IJ=-JI=K.$$
For $\xi=a_0\unitquat+a_1I+a_2J+a_3K$ belonging to $\HA,$ we denote
$$\overline{\xi}:=a_0\unitquat-a_1I-a_2J-a_3K,$$
and we consider the following two inner products on $\HA:$
$$\langle\langle.,.\rangle\rangle:\hspace{.5cm}\begin{array}[t]{rcl}\HA\times\HA&\rightarrow&\HA\\
(\xi,\xi')&\mapsto&\overline{\xi'}{\xi}\end{array}$$
and
$$H:\hspace{.5cm}\begin{array}[t]{rcl}\HA\times\HA&\rightarrow&\mathcal{A}\\
(\xi,\xi')&\mapsto &a_0a_0'+a_1a_1'+a_2a_2'+a_3a_3'\end{array}$$
where $\xi=a_0\unitquat+a_1I+a_2J+a_3K$ and $\xi'=a_0'\unitquat+a_1'I+a_2'J+a_3'K.$ We note that
$$H(\xi,\xi)=\langle\langle\xi,\xi\rangle\rangle$$
for all $\xi\in\HA.$

\subsection{Clifford map, Clifford algebra and Spin representations}\label{section clifford map}

We consider the Clifford map
\begin{eqnarray*}
\R^{4}&\rightarrow&\HA(2)\\
(x_0,x_1,x_2,x_3)&\mapsto& \left(\begin{array}{cc}0&\sigma x_0\unitquat+x_1I+x_2J+x_3K\\-\sigma x_0\unitquat+x_1I+x_2J+x_3K&0\end{array}\right)
\end{eqnarray*}
where $\HA(2)$ stands for the space of $2\times 2$ matrices with coefficients in $\HA;$ using this map, the Clifford algebra of $\R^{4}$ identifies to
$$Cl(4)=\left\{\left(\begin{array}{cc}a&b\\\tch{b}&\tch{a}\end{array}\right),\ a,b\in\HA\right\}.$$
Here and below,  for $\xi=a_0\unitquat+a_1I+a_2J+a_3K\in\HA$ we denote
$$\tch{\xi}:=\overline{a_0}\unitquat+\overline{a_1}I+\overline{a_2}J+\overline{a_3}K,$$
where, if $a_i=x_i+\sigma y_i$ belongs to $\mathcal{A},$ its conjugate is $\overline{a_i}:=x_i-\sigma y_i.$ Its subalgebra of even elements is
\begin{equation}\label{even elements}
Cl_0(4)=\left\{\left(\begin{array}{cc}a&0\\0&\tch{a}\end{array}\right),\ a\in\HA\right\}\simeq\HA,
\end{equation}
and we have
\begin{equation}\label{spin4 S3A}
Spin(4)=\{a\in \HA:\ H(a,a)=1\}\hspace{.5cm}=:\ S^3_{\mathcal{A}}.
\end{equation}
We note the identification
\begin{equation}\label{S3A product}
S^3_{\mathcal{A}}\ \simeq\ S^3\times S^3
\end{equation}
when we identify $\HA$ to $\HH\oplus\HH$ by the isomorphism
\begin{eqnarray}\label{isom HA HH}
\HA&\rightarrow&\HH\oplus\HH\\
\xi&\mapsto& (\xi_+,\xi_-),\nonumber
\end{eqnarray} 
where $\xi,\xi_+$ and $\xi_-$ are linked by
$$\xi\ =\ \frac{1+\sigma}{2}\ \xi_+\ +\ \frac{1-\sigma}{2}\ \xi_-.$$
Moreover, we have the double covering
\begin{eqnarray}
\Phi:Spin(4)&\stackrel{2:1}{\longrightarrow}& SO(4)\label{double cover}\\
q&\mapsto &(\xi\in\R^{4}\mapsto q\xi \tch{q}^{-1}\in\R^{4}),\nonumber
\end{eqnarray}
where $\mathbb{R}^4\subset\HA$ is defined by 
\begin{eqnarray}\label{R4intoHA}
\mathbb{R}^4&:=&\{\xi\in \HA:\ \tch{\overline{\xi}}=-\xi\}\\
&=&\{\sigma x_0\unitquat+x_1I+x_2J+x_3K,\ (x_0,x_1,x_2,x_3)\in\mathbb{R}^4\}.\nonumber
\end{eqnarray}
We note that the euclidean metric on $\mathbb{R}^4$ is given by the restriction of the form $H.$ 
\\

Let $\rho:Cl(4)\rightarrow End_{\C}(\HA)$ be the complex representation of $Cl(4)$ on $\HA$ given by
\begin{eqnarray*}
\rho\left(\begin{array}{cc}a&b\\\tch{b}&\tch{a}\end{array}\right):&&\xi\simeq\left(\begin{array}{c}\xi\\\tch{\xi}\end{array}\right)\mapsto\left(\begin{array}{cc}a&b\\\tch{b}&\tch{a}\end{array}\right)\left(\begin{array}{c}\xi\\\tch{\xi}\end{array}\right)\simeq a\xi+b\tch{\xi},
\end{eqnarray*}
where the complex structure on $\HA$ is given by the right multiplication by $I.$ The restriction of the representation $\rho$ to $Spin(4)$ gives
\begin{eqnarray*}
\rho_{|Spin(4)}:\ Spin(4)&\rightarrow &End_{\C}(\HA)\\
a&\mapsto &(\xi\in\HA\mapsto a\xi\in\HA).
\end{eqnarray*}
This representation splits into 
$$\HA=\HA_+\oplus\HA_-$$
where $\HA_+=\{\xi\in\HA:\ \sigma\xi =\xi\}$ and $\HA_-=\{\xi\in\HA:\ \sigma\xi=-\xi\};$ this decomposition corresponds to (\ref{isom HA HH}), since 
$$\HA_+=\left\{\frac{1+\sigma}{2}\ \xi,\ \xi\in\HH\right\}\simeq\HH\hspace{.5cm}\mbox{and}\hspace{.5cm}\HA_-=\left\{\frac{1-\sigma}{2}\ \xi,\ \xi\in\HH\right\}\simeq\HH.$$

\subsection{Spinors under the splitting $\R^4=\R^2\times\R^2$}\label{section splitting}
We now consider the splitting $\R^4=\R^2\times\R^2$ and the corresponding inclusion 
$$SO(2)\times SO(2)\subset SO(4).$$ 
Using the definition (\ref{double cover}) of $\Phi,$ we easily get 
\begin{equation*}
\Phi^{-1}(SO(2)\times SO(2))=\{\cos \theta +\sin \theta\ I,\ \theta\in\mathcal{A}\}=:S^1_{\mathcal{A}}\hspace{.2cm}\subset Spin(4),
\end{equation*}
where the $\cos$ and $\sin$ functions are naturally defined on the Lorentz numbers by
$$\cos \theta\ =\ \sum_{n=0}^{+\infty}\frac{(-1)^n}{2n!}\theta^{2n}\hspace{.5cm}\mbox{and}\hspace{.5cm}\sin \theta\ =\ \sum_{n=0}^{+\infty}\frac{(-1)^n}{2n+1!}\theta^{2n+1}$$
for all $\theta\in\mathcal{A}.$  Indeed, setting $\theta=\frac{1+\sigma}{2}s+\frac{1-\sigma}{2}t,$ we have in fact
$$\cos \theta=\frac{1+\sigma}{2}\cos s+\frac{1-\sigma}{2}\cos t\hspace{.3cm}\mbox{and}\hspace{.3cm}\sin \theta=\frac{1+\sigma}{2}\sin s+\frac{1-\sigma}{2}\sin t,$$
and the usual trigonometric formulas hold for these trigonometric functions; then, setting $\theta=\theta_1+\sigma\theta_2$ with $\theta_1,\theta_2\in\R,$ we get, in $\HA,$
$$\cos \theta+\sin \theta\ I=(\cos\theta_2+\sigma\sin\theta_2\ I).(\cos\theta_1+\sin\theta_1\ I),$$
and $\Phi(\cos \theta +\sin \theta\ I)$ appears to be the rotation of $\R^{4}$ consisting in a rotation of angle $2\theta_1$ in $\{0\}\times\R^{2}$ and in a rotation of angle $2\theta_2$ in $\R^2\times\{0\}$. Summing up the preceding results, we define
\begin{equation}\label{def spin2}
Spin(2):=\{\cos\theta_1+\sin\theta_1\ I,\ \theta_1\in\R\}\subset Spin(4)
\end{equation}
and
\begin{equation}\label{def spin2'}
Spin'(2):=\{\cos\theta_2+\sigma\sin\theta_2\ I,\ \theta_2\in\R\}\subset Spin(4),
\end{equation}
and we have
$$S^1_{\mathcal{A}}=Spin'(2).Spin(2)\simeq Spin(2)\times Spin(2)/\Z_2,$$
and the double covering 
$$\Phi:\hspace{.5cm}S^1_{\mathcal{A}}=Spin'(2).Spin(2)\hspace{.5cm}\stackrel{2:1}{\longrightarrow} \hspace{.5cm}SO(2)\times SO(2).$$
If we now restrict the spin representation $\rho$ of $Spin(4)$ to $S^1_{\mathcal{A}}\subset Spin(4)$ the representation in $\HA=\HA_+\oplus\HA_-$ splits into four subspaces
\begin{equation}\label{splitting HA}
\HA_+=S^{++}\oplus S^{--}\hspace{1cm}\mbox{and}\hspace{1cm}\HA_-=S^{+-}\oplus S^{-+}
\end{equation}
where
$$S^{++}=\frac{1+\sigma}{2}\left(\R J\oplus\R K\right),\ S^{--}=\frac{1+\sigma}{2}\left(\R \unitquat\oplus\R I\right),\ S^{+-}=\frac{1-\sigma}{2}\left(\R \unitquat\oplus\R I\right)$$
and
$$S^{-+}=\frac{1-\sigma}{2}\left(\R J\oplus\R K\right).$$
\begin{rem}\label{rmk repr}
The representation
\begin{eqnarray}\label{def rho}
\rho:\hspace{.5cm}Spin'(2)\times Spin(2)&\rightarrow& End_{\C}(\HA)\label{rep spin2}\\
(g_1,g_2)&\mapsto&\rho(g):\xi\mapsto g\xi,\nonumber
\end{eqnarray}
where $g=g_1g_2\in S^1_{A}=Spin'(2).Spin(2)\ \subset\HA,$  is equivalent to the representation 
\begin{equation}\label{splitting rep spin}
\rho_1\otimes\rho_2\ =\ \rho_1^+\otimes\rho_2^+\ \ \oplus\ \ \rho_1^-\otimes\rho_2^-\ \ \oplus\ \ \rho_1^+\otimes\rho_2^-\ \ \oplus\ \ \rho_1^-\otimes\rho_2^+
\end{equation}
of $Spin(2)\times Spin(2),$ where $\rho_1=\rho_1^++\rho_1^-$ and $\rho_2=\rho_2^++\rho_2^-$ are two copies of the spinor representation of $Spin(2);$ moreover, the decomposition (\ref{splitting HA}) corresponds to the decomposition (\ref{splitting rep spin}): indeed, writing
$$\theta=\theta_1+\sigma\theta_2=\frac{1+\sigma}{2}\left(\theta_1+\theta_2\right)+\frac{1-\sigma}{2}\left(\theta_1-\theta_2\right),$$
it is not difficult to see that the restrictions of the representation (\ref{rep spin2}) to the subspaces $S^{++},$ $S^{--},$ $S^{+-}$ and $S^{-+}$ are respectively equivalent to the multiplications by $e^{-i(\theta_1+\theta_2)},$ $e^{i(\theta_1+\theta_2)},$ $e^{i(\theta_1-\theta_2)}$ and $e^{i(-\theta_1+\theta_2)}$ on $\C$ where $\theta_2\in\R/2\pi\Z$ describes the first factor and $\theta_1\in\R/2\pi\Z$ the second factor of $Spin'(2)\times Spin(2),$ as in (\ref{def spin2})-(\ref{def spin2'}). 
\end{rem}

\section{Twisted spinor bundle}\label{section twisted spinor bundle}

We assume that $M$ is an oriented surface, with a metric, and that $E\rightarrow M$ is a vector bundle of rank two, oriented, with a fibre metric and a connection compatible with the metric. We set $\Sigma:=\Sigma E\otimes\Sigma M,$ the tensor product of spinor bundles constructed from $E$ and $TM.$ We denote by $Q:=Q_E\times_M Q_M$ the $SO(2)\times SO(2)$ principal bundle on $M,$ product of the positive and orthonormal frame bundles on $E$ and $TM.$  If $\tilde{Q}:=\tilde{Q}_E\times_M\tilde{Q}_M$ is the $Spin(2)\times Spin(2)$ principal bundle product of the spin structures $\tilde{Q}_E\rightarrow Q_E$ and $\tilde{Q}_M\rightarrow Q_M,$ the bundle $\Sigma$ is associated to $\tilde{Q}$ and to the representation (\ref{def rho}),  that is
$$\Sigma\simeq\tilde{Q}\times\HA/\rho.$$ 
This is because $\rho$ is equivalent to the representation $\rho_1\otimes\rho_2,$ tensor product of two copies of the spin representation of $Spin(2)$ (see Remark \ref{rmk repr} above). Obviously, the maps  $\xi\mapsto g \xi$ belong in fact to $End_{\HA}(\HA),$ the space of endomorphisms of $\HA$ which are $\HA-$linear, where the linear structure on $\HA$ is given by the multiplication on the right: $\Sigma$ is thus naturally equipped with a linear right-action of $\HA.$ Since the products $\langle\langle.,.\rangle\rangle$ and $H(.,.)$ on $\HA$ are preserved by the multiplication of $Spin(4)$ on the left, $\Sigma$ is moreover naturally equipped with the products
$$\langle\langle.,.\rangle\rangle:\begin{array}[t]{rcl}\Sigma\times\Sigma&\rightarrow&\HA\\
(\varphi,\varphi')&\mapsto&\langle\langle\xi,\xi'\rangle\rangle\end{array}$$
and
$$H:\begin{array}[t]{rcl}\Sigma\times\Sigma&\rightarrow&\mathcal{A}\\
(\varphi,\varphi')&\mapsto&H(\xi,\xi'),\end{array}$$
where $\xi$ and $\xi'\ \in \HA$ are the coordinates of $\varphi$ and $\varphi'$ in some spinorial frame.
\\

We quote the following properties: for all $X\in E\oplus TM$ and $\varphi,\psi\in\Sigma,$ 
\begin{equation}\label{bracket property 1}
\langle\langle X\cdot\varphi,\psi\rangle\rangle=-\bigtch{\langle\langle \varphi,X\cdot\psi\rangle\rangle}
\end{equation}
and
\begin{equation}\label{bracket property 2}
\langle\langle\varphi,\psi\rangle\rangle=\overline{\langle\langle\psi,\varphi\rangle\rangle}.
\end{equation}
\textbf{Notation.}
Throughout the paper, we will use the following notation: if $\tilde{s}\in\tilde{Q}$ is a given frame, the brackets $[\cdot]$ will denote the coordinates $\in \HA$ of the spinor fields in $\tilde{s},$ that is, for $\varphi\in\Sigma,$
$$\varphi\simeq[\tilde{s},[\varphi]]\hspace{1cm}\in\hspace{.5cm} \Sigma\simeq\tilde{Q}\times\HA/\rho.$$
  We will also use the brackets to denote the coordinates in $\tilde{s}$ of the elements of the Clifford algebra $Cl(E\oplus TM)$: $X\in Cl_0(E\oplus TM)$ and  $Y\in Cl_1(E\oplus TM)$ will be respectively represented by $[X]$ and $[Y]\in\HA$ such that, in $\tilde{s},$ 
$$X\simeq \left(\begin{array}{cc}[X]&0\\0&\tch{[X]}\end{array}\right)\hspace{1cm}\mbox{and}\hspace{1cm}Y\simeq\left(\begin{array}{cc}0&[Y]\\\tch{[Y]}&0\end{array}\right).$$
We note that
$$[X\cdot\varphi]=[X][\varphi]\hspace{1cm}\mbox{and}\hspace{1cm}[Y\cdot\varphi]=[Y]\tch{[\varphi]}.$$
If $(e_0,e_1)$ and $(e_2,e_3)$ are positively oriented and orthonormal frames of $E$ and $TM,$ a frame $\tilde{s}\in\tilde{Q}$ such that $\pi(\tilde{s})=(e_0,e_1,e_2,e_3),$ where $\pi:\tilde{Q}\rightarrow Q_E\times_M Q_M$ is the natural projection onto the bundle of the orthonormal frames of $E\oplus TM$, will be called \textit{adapted to the frame} $(e_0,e_1,e_2,e_3);$ in such a frame, $e_0,$ $e_1,$ $e_2$ and $e_3\in Cl_1(E\oplus TM)$ are respectively represented by $\sigma\textit{1},$ $I,$ $J$ and $K\in\HA.$

\section{Spinor representation of surfaces in $\R^4$}\label{section spinor representation}

The aim of this section is to formulate the main results of \cite{BLR} using the formalism introduced in the previous sections. We keep the notation of the previous section, and recall that $\Sigma=\Sigma E\otimes\Sigma M$ is equipped with a natural connection
$$\nabla\ =\ \nabla^E\otimes id_{\Sigma M}\ +\ id_{\Sigma E}\otimes\nabla^M,$$
the tensor product of the spinor connections on $\Sigma E$ and on $\Sigma M,$ and also with a natural action of the Clifford bundle
$$Cl(E\oplus TM)\simeq Cl(E)\hat\otimes Cl(M);$$
see \cite{BLR}. This permits to define the Dirac operator $D$ acting on $\Gamma(\Sigma)$ by
$$D\varphi=e_2\cdot\nabla_{e_2}\varphi+e_3\cdot\nabla_{e_3}\varphi,$$
where $(e_2,e_3)$ is an orthonormal basis of $TM.$ We have the following:
\begin{prop}\label{prop fundamental xi}
Let $\vec{H}$ be a section of $E,$ and assume that $\varphi\in\Gamma(\Sigma)$ is such that
\begin{equation}\label{dirac equation}
D\varphi=\vec{H}\cdot\varphi\hspace{1cm}\mbox{ and }\hspace{1cm}H(\varphi,\varphi)=1.
\end{equation}
We define the $\HA$-valued $1$-form $\xi\in\Omega^1(M,\HA)$ by
\begin{equation}\label{def xi}
\xi(X):=\langle\langle X\cdot\varphi,\varphi\rangle\rangle\hspace{1cm}\in\hspace{.5cm}\HA
\end{equation}
for all $X\in TM,$ where the pairing $\langle\langle.,.\rangle\rangle:\Sigma\times\Sigma\rightarrow\HA$ is defined in the previous section. Then
\begin{enumerate}
\item the form $\xi$ satisfies
$$\xi=-\tch{\overline{\xi}},$$
and thus takes its values in $\R^{4}\subset\HA$ (see (\ref{R4intoHA}));
\item the form $\xi$ is closed:
$$d\xi=0.$$
\end{enumerate}
\end{prop}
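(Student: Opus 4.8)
The plan is to prove both statements by working in a local spinorial frame $\tilde s$ adapted to some orthonormal frame $(e_0,e_1,e_2,e_3)$ of $E\oplus TM$, so that $\varphi$ is represented by a function $[\varphi]=:g\in\HA$ and the Dirac equation \eqref{dirac equation} becomes an explicit first-order system for $g$. For (1), I would write $\xi(e_j)=\langle\langle e_j\cdot\varphi,\varphi\rangle\rangle$ for $j=2,3$ using the bracket formulas from Section \ref{section twisted spinor bundle}: since $e_2,e_3\in Cl_1(E\oplus TM)$ are represented by $J,K\in\HA$, one has $[e_j\cdot\varphi]=[e_j]\tch{[\varphi]}$, hence $\xi(e_2)=\overline{g}J\tch{g}$ and $\xi(e_3)=\overline{g}K\tch{g}$ (up to the precise normalization in the definition of $\langle\langle\cdot,\cdot\rangle\rangle$). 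Then I would apply property \eqref{bracket property 1} with $X=e_j$: it gives $\langle\langle e_j\cdot\varphi,\varphi\rangle\rangle=-\bigtch{\langle\langle\varphi,e_j\cdot\varphi\rangle\rangle}$, and combining with property \eqref{bracket property 2} (which relates $\langle\langle\varphi,e_j\cdot\varphi\rangle\rangle$ to $\overline{\langle\langle e_j\cdot\varphi,\varphi\rangle\rangle}$) yields exactly $\xi(e_j)=-\tch{\overline{\xi(e_j)}}$. By \eqref{R4intoHA} this means $\xi$ takes values in $\R^4\subset\HA$. This part is essentially formal manipulation of the two bracket identities, so it should be short.

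For (2), closedness, I would compute $d\xi$ using the Levi-Civita connection on $M$ and the compatibility of $\nabla$ with $\langle\langle\cdot,\cdot\rangle\rangle$. Writing $d\xi(e_2,e_3)=e_2(\xi(e_3))-e_3(\xi(e_2))-\xi([e_2,e_3])$ and expanding each term by the Leibniz rule for $\nabla$, the torsion-free condition makes the connection terms involving $\nabla e_2$, $\nabla e_3$ cancel against $\xi([e_2,e_3])$, leaving
\begin{equation*}
d\xi(e_2,e_3)=\langle\langle e_3\cdot\nabla_{e_2}\varphi,\varphi\rangle\rangle+\langle\langle e_3\cdot\varphi,\nabla_{e_2}\varphi\rangle\rangle-\langle\langle e_2\cdot\nabla_{e_3}\varphi,\varphi\rangle\rangle-\langle\langle e_2\cdot\varphi,\nabla_{e_3}\varphi\rangle\rangle.
\end{equation*}
Now I would invoke the Dirac equation in the form $e_2\cdot\nabla_{e_2}\varphi+e_3\cdot\nabla_{e_3}\varphi=\vec H\cdot\varphi$ together with Clifford multiplication by $e_2$ or $e_3$ (using $e_2^2=e_3^2=-1$, $e_2e_3=-e_3e_2$) to reorganize the four terms. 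The standard trick is to multiply $D\varphi=\vec H\cdot\varphi$ on the left by $e_3$ (resp. $e_2$) so that, e.g., $e_3\cdot\nabla_{e_2}\varphi=-e_2\cdot\nabla_{e_3}\varphi-e_3\cdot e_2\cdot\nabla_{e_2}\varphi+\ldots$; substituting into the display and using the skew-adjointness/symmetry properties of Clifford multiplication relative to $\langle\langle\cdot,\cdot\rangle\rangle$ (again from \eqref{bracket property 1}), the terms involving $\vec H$ contribute something of the form $\langle\langle\vec H\cdot\varphi,\varphi\rangle\rangle$-type expressions that vanish because $\vec H\in E$ is Clifford-orthogonal to $TM$ and the relevant bracket is skew. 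Everything else cancels pairwise.

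The main obstacle I anticipate is the bookkeeping in step (2): getting the signs right when commuting Clifford elements past each other and matching them with the hermitian-type symmetry of $\langle\langle\cdot,\cdot\rangle\rangle$, which is $\HA$-valued and only conjugate-symmetric via \eqref{bracket property 2}, not symmetric. In particular one must be careful that the $\tch{\cdot}$ and $\overline{\cdot}$ operations interact correctly with left Clifford multiplication, and that the term involving $\vec H$ genuinely drops (this is where the hypothesis $\vec H\in\Gamma(E)$, as opposed to an arbitrary section of $E\oplus TM$, is used). An alternative, perhaps cleaner route is to pass entirely to the frame representation: write $g=[\varphi]$, translate \eqref{dirac equation} into $\partial g$ equations, express $\xi(e_2),\xi(e_3)$ as quadratic expressions in $g$, and verify $d\xi=0$ by a direct differentiation using the $g$-equations — this trades conceptual clarity for a self-contained but longer computation, and is the fallback if the coordinate-free cancellation proves delicate. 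In either approach the result is that $\xi$ is a closed $\R^4$-valued $1$-form, which (on a simply connected $M$) will integrate to the desired immersion in the following sections.
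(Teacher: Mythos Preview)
Your proposal is correct and follows exactly the route the paper indicates: part (1) is obtained by combining \eqref{bracket property 1} and \eqref{bracket property 2}, and part (2) by differentiating $\xi$, using the torsion-free Leibniz expansion, and reducing via the Dirac equation; the paper itself gives no further detail and simply refers to \cite{BLR} for the analogous computation. Your anticipated bookkeeping is accurate---the key cancellation is that $e_3\cdot\nabla_{e_2}\varphi-e_2\cdot\nabla_{e_3}\varphi=e_2\cdot e_3\cdot\vec H\cdot\varphi$ (multiply the Dirac equation on the left by $e_2\cdot e_3$), and the resulting term $\langle\langle e_2\cdot e_3\cdot\vec H\cdot\varphi,\varphi\rangle\rangle$ satisfies $A=\tch{\overline{A}}$ precisely because $\vec H\in\Gamma(E)$, so the two halves of $d\xi(e_2,e_3)$ cancel.
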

\begin{proof}
The first part of the proposition is a consequence of (\ref{bracket property 1}) and (\ref{bracket property 2}), and the second part relies on the Dirac equation (\ref{dirac equation}); we refer to \cite{BLR} for the detailed proof of a very similar proposition. 
\end{proof}
We may rewrite Theorem 1 of \cite{BLR} for surfaces in $\R^4$ as follows:
\begin{thm}\label{th main result}
Suppose that $M$ is moreover simply connected. The following statements are equivalent.
\begin{enumerate}
\item There exists a spinor field $\varphi$ of $\Gamma(\Sigma)$ with $H(\varphi,\varphi)=1$ solution of the Dirac equation
$$D\varphi=\vec{H}\cdot\varphi.$$
\item There exists a spinor field $\varphi\in\Gamma(\Sigma)$ with $H(\varphi,\varphi)=1$ solution of 
$$\nabla_X\varphi=-\frac{1}{2}\sum_{j=2,3}e_j\cdot B(X,e_j) \cdot\varphi, $$
where $B:TM\times TM\lgra E$ is bilinear with $\frac{1}{2}\trace(B)=\vec{H},$ and where $(e_2,e_3)$ is an orthonormal basis of $TM$ at every point.
\item There exists an isometric immersion $F$ of $M$ into $\R^{4}$ with normal bundle $E$ and mean curvature vector $\vec{H}$.
\end{enumerate}
Moreover, $F=\int\xi,$ where $\xi$ is the closed 1-form on $M$ with values in $\R^{4}$ defined by
\begin{equation}\label{def xi th}
\xi(X):=\langle\langle X\cdot\varphi,\varphi\rangle\rangle\hspace{.5cm}\in\hspace{.5cm}\R^{4}\subset\HA
\end{equation}
for all $X\in TM.$ 
\end{thm}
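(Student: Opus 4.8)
The plan is to deduce Theorem \ref{th main result} from the general spinor representation formula of \cite{BLR} by checking that the formalism set up in Sections \ref{section clifford algebra}--\ref{section spinor representation} is just a concrete model of the abstract setting there. Concretely, the bundle $\Sigma = \Sigma E \otimes \Sigma M$ with its connection $\nabla = \nabla^E\otimes\mathrm{id} + \mathrm{id}\otimes\nabla^M$, its Clifford action of $Cl(E\oplus TM)\simeq Cl(E)\hat\otimes Cl(M)$, and the Hermitian pairing $H(\cdot,\cdot)$ coincide with the data to which Theorem 1 of \cite{BLR} applies, once one identifies the model spinor module $\HA$ of Section \ref{section clifford algebra} with the representation space used in \cite{BLR}; this identification is exactly the content of Remark \ref{rmk repr}, which shows that $\rho$ restricted to $Spin(2)\times Spin(2)$ is the tensor product of two copies of the spin representation of $Spin(2)$. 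So the first step is to state precisely this dictionary and invoke \cite{BLR} to get the equivalence $(1)\Leftrightarrow(2)\Leftrightarrow(3)$ verbatim.

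Next I would address the passage from the Weingarten-type spinor equation in $(2)$ to the first-order Dirac equation in $(1)$, which is the only genuinely analytic point: contracting $\nabla_X\varphi = -\frac12\sum_{j}e_j\cdot B(X,e_j)\cdot\varphi$ with the Clifford multiplication and summing over an orthonormal basis $(e_2,e_3)$ of $TM$ gives $D\varphi = e_2\cdot\nabla_{e_2}\varphi + e_3\cdot\nabla_{e_3}\varphi = -\frac12\sum_{i,j}e_i\cdot e_j\cdot B(e_i,e_j)\cdot\varphi$, and one uses the Clifford relations $e_i\cdot e_j + e_j\cdot e_i = -2\delta_{ij}$ together with the symmetry $B(e_i,e_j)=B(e_j,e_i)$ to collapse the double sum to $-\sum_{j}B(e_j,e_j)\cdot\varphi = -\,\trace(B)\cdot\varphi$. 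Care is needed because the $e_j$ anticommute with normal vectors only up to the signs dictated by the Clifford map of Section \ref{section clifford map}, but the identity $\frac12\trace(B)=\vec H$ then yields $D\varphi = \vec H\cdot\varphi$ with the correct sign; the converse direction (that a Dirac-eigen spinor of norm one automatically comes from a $B$ as in $(2)$) is again exactly Theorem 1 of \cite{BLR} and need not be reproved.

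Finally, for the formula $F=\int\xi$ with $\xi(X)=\langle\langle X\cdot\varphi,\varphi\rangle\rangle$, I would argue as follows. By Proposition \ref{prop fundamental xi}, under the hypotheses of $(1)$ the $\HA$-valued $1$-form $\xi$ is closed and takes values in $\R^4\subset\HA$; since $M$ is simply connected, $\xi$ integrates to a map $F:M\to\R^4$, well defined up to a constant. It remains to identify $F$ with the immersion produced in $(3)$: one shows $dF(X) = \xi(X)$ equals, under the identification of $\R^4$ with the restriction of the form $H$, the image $dF_0(X)$ of the tangent vector $X$ under the immersion $F_0$ furnished by \cite{BLR}; equivalently, one checks that $|\xi(X)|_H = |X|$ and that $\xi(e_2),\xi(e_3)$ span the tangent plane with the correct orientation, which follows from $H(\varphi,\varphi)=1$ and the properties (\ref{bracket property 1})--(\ref{bracket property 2}) of the pairing. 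This is precisely the ``moreover'' part of Theorem 1 of \cite{BLR} transported through the dictionary above. The main obstacle is bookkeeping: making the identification with \cite{BLR} watertight, in particular matching the sign and ordering conventions in the Clifford map of Section \ref{section clifford map} with those of \cite{BLR}, so that both the Dirac equation and the integral formula come out with the stated signs; once the conventions are pinned down, every step is a direct translation.
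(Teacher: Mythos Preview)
Your proposal is correct and follows essentially the same approach as the paper: both treat the theorem as a direct translation of Theorem~1 of \cite{BLR} into the quaternion/Lorentz-number model set up in Sections~\ref{section clifford algebra}--\ref{section spinor representation}, with the paper simply sketching $(3)\Rightarrow(1)$ via restriction of the constant spinor $\unitquat$ and $(1)\Rightarrow(3)$ via $F=\int\xi$, and deferring the rest to \cite{BLR,Bay}. One small point to clean up: in your contraction for $(2)\Rightarrow(1)$ the double sum collapses to $+\tfrac12\sum_j B(e_j,e_j)\cdot\varphi=\vec H\cdot\varphi$, not $-\trace(B)\cdot\varphi$, since $e_j\cdot e_j=-1$ already supplies the sign; your hedging remark about signs is well placed, but the actual computation is straightforward once you note that the off-diagonal terms cancel by the symmetry of $B$ and $e_2e_3+e_3e_2=0$.
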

The proof that (1) is equivalent to (3) is very simple: assuming first that $M$ is immersed in $\R^4,$ the spinor bundle of $\R^4$ restricted to $M$ identifies to $\Sigma=\Sigma E\otimes\Sigma M$ where $E$ is the normal bundle of the surface in $\R^4$, and the restriction to $M$ of the constant spinor field $\unitquat$ of $\R^4$ satisfies (1) (by the spinor Gauss formula, see \cite{BLR}); conversely, if $\varphi\in\Gamma(\Sigma)$ satisfies (1), it is easy to check that the formula $F=\int\xi,$ where $\xi$ is defined by (\ref{def xi th}), defines an isometric immersion with normal bundle $E$ and mean curvature vector $\vec{H};$ see \cite{Bay,BLR}.
\begin{rem}\label{rmk ident normal}
The map $X\in E\mapsto\langle\langle X\cdot\varphi,\varphi\rangle\rangle\in\R^4$ identifies $E$ with the normal bundle of the immersion; it preserves the metrics, the connections and the fundamental forms. See \cite{Bay,BLR}.
\end{rem}
\begin{rem}\label{rmk general idea}
Applications of the spinor representation formula in Sections \ref{section local description} and \ref{section flat tori S3} will rely on the following simple observation: assume that $F_o:M\hookrightarrow\R^4$ is an isometric immersion and consider $\varphi=\unitquat_{|M}$ the restriction to $M$ of the constant spinor field $\unitquat$ of $\R^4;$ if 
\begin{equation}\label{formule repr}
F=\int\xi,\hspace{1cm}\xi(X)=\langle\langle X\cdot\varphi,\varphi\rangle\rangle
\end{equation}
is the immersion given in the theorem, then $F\simeq F_o.$ This is in fact trivial since
\begin{equation}\label{ident trivial}
\xi(X)\ =\ \overline{[\varphi]}[X]\tch{[\varphi]}\ =\ [X]\ \simeq\ X
\end{equation}
in a spinorial frame $\tilde{s}$ of $\R^4$ which is above the canonical basis (in such a frame $[\varphi]=\pm\unitquat$). The representation formula (\ref{formule repr}), when written in moving frames adapted to the immersion, will give non trivial formulas.
\end{rem}
\section{The Gauss map of a surface in $\R^4$}\label{section gauss map}

We assume in this section that $M$ is an oriented surface immersed in $\R^4.$ We consider $\Lambda^2\R^{4},$ the vector space of bivectors of $\R^{4}$ endowed with its natural metric $\langle.,.\rangle.$ The Grassmannian of the oriented 2-planes in $\R^{4}$ identifies with the submanifold of unit and simple bivectors
$$\mathcal{Q}=\{\eta\in\Lambda^2\R^{4}:\ \langle \eta,\eta\rangle=1,\ \eta\wedge\eta=0\},$$
and the oriented Gauss map of $M$ with the map
$$G:\ M\rightarrow \mathcal{Q},\ p\mapsto G(p)=u_1\wedge u_2,$$
where $(u_1,u_2)$ is a positively oriented and orthonormal basis of $T_pM.$ The Hodge $*$ operator $\Lambda^2\R^{4}\rightarrow \Lambda^2\R^{4}$ is defined by the relation
\begin{equation}\label{i lambda2}
\langle *\eta,\eta'\rangle=\eta\wedge\eta'
\end{equation}
for all $\eta,\eta'\in \Lambda^2\R^{4},$ where we identify $\Lambda^4\R^{4}$ to $\R$ using the canonical volume element $e_0^o\wedge e_1^o\wedge e_2^o\wedge e_3^o$ on $\R^{4};$ here and below $(e_0^o,e_1^o,e_2^o,e_3^o)$ stands for the canonical basis of $\R^{4}$. It satisfies $*^2=id_{\Lambda^2\R^{4}}$ and thus $\sigma=*$ naturally defines a $\mathcal{A}$-module structure on $\Lambda^2\R^{4}:$ it is such that
$$a\eta=a_1\eta+a_2*\eta$$
for all $a=a_1+\sigma a_2\in\mathcal{A}$ and $\eta\in\Lambda^2\R^4.$ We also define
\begin{equation}\label{H lambda2}
H(\eta,\eta')=\langle \eta,\eta'\rangle+\sigma\ \eta\wedge\eta'\hspace{.5cm}\in\hspace{.5cm}\mathcal{A}
\end{equation}
for all $\eta,\eta'\in\ \Lambda^2\R^{4}.$ This is a $\mathcal{A}$-bilinear map on $\Lambda^2\R^{4}$ since, by (\ref{i lambda2}),
$$H(\sigma\ \eta,\eta')=H(\eta,\sigma\ \eta')=\sigma\ H(\eta,\eta')$$ 
for all $\eta,\eta'\in\Lambda^2\R^4,$ and we have
$$\mathcal{Q}=\{\eta\in\Lambda^2\R^{4}:\ H(\eta,\eta)=1\}.$$
The bivectors
$$E_1=e_2^o\wedge e_3^o,\ E_2=e_3^o\wedge e_1^o,\ E_3=e_1^o\wedge e_2^o$$
form a basis of $\Lambda^2\R^{4}$ as a module over $\mathcal{A};$ this basis is such that $H(E_i,E_j)=\delta_{ij}$ for all $i,j.$ Using the Clifford map defined Section \ref{section clifford map}, and identifying $\Lambda^2\R^{4}$ with the elements of order 2 of $Cl_0(4)\subset Cl(4)\subset\HA(2)$ (see (\ref{even elements})), we get
$$E_1=I,\ E_2=J,\ E_3=K$$
and
$$\Lambda^2\R^{4}=\{aI+bJ+cK\in\HA:(a,b,c)\in\mathcal{A}^3\};$$
moreover, using this identification, the Lorentz structure $\sigma$ and the quadratic map $H$ defined above on $\Lambda^2\R^{4}$ coincide with the natural Lorentz structure $\sigma$ and the quadratic map $H$ defined on $\HA$ (Section \ref{subsection quaternions}), and
\begin{equation}\label{Q S2A}
\mathcal{Q}=\{aI+bJ+cK:\ (a,b,c)\in\mathcal{A}^3,\ a^2+b^2+c^2=1\}\hspace{.5cm}=:\ S^2_{\mathcal{A}}.
\end{equation}
We note that, by the isomorphism (\ref{isom HA HH}),
\begin{equation}\label{S2A product}
S^2_{\mathcal{A}}\ \simeq\ S^2\times S^2.
\end{equation}
We define the \textit{cross product} of two vectors $\xi,\xi'\in\Im m\ \HA:=\mathcal{A}I\oplus\mathcal{A}J\oplus\mathcal{A}K$ by
$$\xi \times \xi':=\frac{1}{2}\left(\xi\xi'-\xi'\xi\right)\hspace{.5cm} \in\ \Im m\ \HA.$$ 
It is such that
$$\langle\langle \xi,\xi'\rangle\rangle=H(\xi,\xi')\unitquat+\xi \times \xi'.$$
\begin{lem}\label{lemma omega 0}
If $\xi\times\xi'=0$ where $\xi$ is invertible in $\HA,$ then
\begin{equation}\label{rel xi xip}
\xi'=\lambda\ \xi
\end{equation}
for some $\lambda\in\mathcal{A}.$ 
\end{lem}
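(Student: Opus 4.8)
The plan is to translate the vanishing of the cross product into a commutation statement, observe that this says $\langle\langle\xi,\xi'\rangle\rangle$ is scalar, and then divide by $\xi$. First I would note that, by definition of $\times$, the hypothesis $\xi\times\xi'=0$ is equivalent to $\xi\xi'=\xi'\xi$, and hence, by the identity $\langle\langle\xi,\xi'\rangle\rangle=H(\xi,\xi')\unitquat+\xi\times\xi'$ recalled just above the lemma, to the statement that $\langle\langle\xi,\xi'\rangle\rangle=\overline{\xi'}\xi$ lies in the scalar subalgebra $\mathcal{A}\unitquat\subset\HA$. So one may write $\overline{\xi'}\xi=c\,\unitquat$ with $c\in\mathcal{A}$ (explicitly $c=H(\xi,\xi')$).

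Second, I would bring in invertibility. A direct computation gives $\xi\overline{\xi}=\overline{\xi}\xi=N(\xi)\unitquat$ with $N(\xi):=H(\xi,\xi)\in\mathcal{A}$, and $N$ is multiplicative since $\overline{\cdot}$ is an anti-involution of $\HA$ fixing $\mathcal{A}$ and $N$ takes central values; hence $\xi$ invertible forces $N(\xi)$ invertible in $\mathcal{A}$, with $\xi^{-1}=N(\xi)^{-1}\overline{\xi}$. Then $\overline{\xi'}\xi=c\,\unitquat$ yields $\overline{\xi'}=c\,\xi^{-1}=c\,N(\xi)^{-1}\overline{\xi}$, and applying $\overline{\cdot}$ (again fixing $\mathcal{A}$) gives $\xi'=c\,N(\xi)^{-1}\,\xi$, i.e. $(\ref{rel xi xip})$ with $\lambda=c\,N(\xi)^{-1}=H(\xi,\xi')/H(\xi,\xi)\in\mathcal{A}$. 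A variant of this last step that avoids conjugation: write $\xi'=\lambda\xi+\zeta$ with $\lambda=H(\xi,\xi')/H(\xi,\xi)$ and $H(\xi,\zeta)=0$, compute $\langle\langle\xi,\zeta\rangle\rangle=\xi\times\zeta=\xi\times\xi'=0$, and conclude $\zeta=0$ by invertibility of $\xi$.

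An alternative, more structural route transports everything through the isomorphism $(\ref{isom HA HH})$, $\HA\simeq\HH\oplus\HH$: it identifies $\Im m\,\HA$ with $\Im m\,\HH\oplus\Im m\,\HH$ and computes $\times$ componentwise, so $\xi\times\xi'=0$ becomes $\xi_+\times\xi_+'=0$ and $\xi_-\times\xi_-'=0$ in $\HH$; invertibility of $\xi$ says $\xi_+,\xi_-\neq 0$, and the classical three-dimensional fact that two imaginary quaternions with zero cross product are $\R$-proportional once one of them is nonzero gives $\xi_\pm'=\lambda_\pm\xi_\pm$ with $\lambda_\pm\in\R$, whence $\lambda:=\tfrac{1+\sigma}{2}\lambda_++\tfrac{1-\sigma}{2}\lambda_-\in\mathcal{A}$ works.

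I do not expect a genuine obstacle; the only delicate point is the precise role of the hypothesis. Because $\mathcal{A}$ has zero divisors, "$\xi$ invertible" is strictly stronger than "$\xi\neq 0$", and it is essential: for instance $\xi=\tfrac{1+\sigma}{2}I$ and $\xi'=\tfrac{1-\sigma}{2}J$ satisfy $\xi\times\xi'=0$ while $\xi'$ is no $\mathcal{A}$-multiple of $\xi$. The hypothesis enters exactly at the division step: through invertibility of $N(\xi)=H(\xi,\xi)$ in $\mathcal{A}$ in the first approach, and through non-vanishing of $\xi_+$ and $\xi_-$ in the second.
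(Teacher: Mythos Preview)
Your proposal is correct, and your ``alternative, more structural route'' via the isomorphism $\HA\simeq\HH\oplus\HH$ is exactly the paper's proof: decompose $\xi,\xi'$ into $\pm$ components, reduce $\xi\times\xi'=0$ to two ordinary cross products vanishing in $\Im m\,\HH\simeq\R^3$, use invertibility of $\xi$ to ensure $\xi_+,\xi_-\neq 0$, and reassemble the two real scalars into a single $\lambda\in\mathcal{A}$.

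Your first approach, by contrast, is a genuinely different and somewhat slicker argument that stays entirely inside $\HA$: observing that $\xi\times\xi'=0$ forces $\overline{\xi'}\xi\in\mathcal{A}\unitquat$ via the identity $\langle\langle\xi,\xi'\rangle\rangle=H(\xi,\xi')\unitquat+\xi\times\xi'$, and then simply dividing by $\xi$ using $\xi^{-1}=H(\xi,\xi)^{-1}\overline{\xi}$. This avoids the componentwise reduction and gives the explicit formula $\lambda=H(\xi,\xi')/H(\xi,\xi)$ for free. The paper's route, on the other hand, has the advantage of making transparent exactly which real-geometric fact is being invoked (collinearity in $\R^3$ from vanishing cross product), and this componentwise picture is reused immediately afterwards in the proof of Proposition~\ref{prop pull back}. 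Your remark that invertibility is genuinely stronger than nonvanishing, with the counterexample $\xi=\tfrac{1+\sigma}{2}I$, $\xi'=\tfrac{1-\sigma}{2}J$, is a nice addition that the paper does not make explicit.
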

\begin{proof}
Writing
$$\xi=\frac{1+\sigma}{2}\ \xi_++\frac{1-\sigma}{2}\ \xi_-\hspace{.5cm}\mbox{and}\hspace{.5cm}\xi'=\frac{1+\sigma}{2}\ \xi'_++\frac{1-\sigma}{2}\ \xi'_-$$
where $\xi_+,$ $\xi_-,$ $\xi'_+$ and $\xi'_-$ belong to $\Im m\ \HH,$ the condition $\xi\times\xi'=0$ is equivalent to the two conditions $\xi_+\times\xi'_+=0$ and $\xi_-\times\xi'_-=0,$ where the cross product is here the usual cross product in $\Im m\ \HH\simeq\R^3;$ this is thus also equivalent to  the fact that both $\xi_+,$ $\xi'_+$ and $\xi_-,$ $\xi'_-$ are linearly dependent in $\R^3.$ If $\xi$ is moreover invertible in $\HA,$ then $\xi_+$ and $\xi_-$ are not zero (see Lemma \ref{invertible HA} in the Appendix), and the result easily follows.
\end{proof}
We also define the \textit{mixed product} of three vectors $\xi,\xi',\xi''\in \Im m\ \HA$ by
$$[\xi,\xi',\xi'']:= H(\xi \times \xi',\xi'')\hspace{.5cm} \in\ \mathcal{A}.$$
The mixed product is a \textit{$\mathcal{A}$-valued volume form} on $\Im m\ \HA;$ it induces a natural \textit{$\mathcal{A}$-valued area form} $\omega_{\mathcal{Q}}$ on $\mathcal{Q}$ by
$${\omega_{\mathcal{Q}}}_p(\xi,\xi'):=[\xi,\xi',p]$$ 
for all $p\in\mathcal{Q}$ and all $\xi,\xi'\in T_p\mathcal{Q}.$ We now compute the pull-back by the Gauss map of the area form $\omega_Q:$
\begin{prop}\label{prop pull back}
We have
\begin{equation}\label{formula pull-back omega}
G^*\omega_{\mathcal{Q}}=(K+\sigma K_N)\ \omega_M,
\end{equation}
where $\omega_M$ is the area form, $K$ is the Gauss curvature and $K_N$ is the normal curvature of $M\hookrightarrow\R^4.$ Assuming moreover that
$$dG_{x_o}:\hspace{.3cm}T_{x_o}M\rightarrow  T_{G(x_0)}\mathcal{Q}$$
is one-to-one at some point $x_o\in M,$ then $K=K_N=0$ at $x_o$ if and only if the linear space $dG_{x_o}(T_{x_o}M)$ is some $\mathcal{A}$-line in $T_{G(x_o)}\mathcal{Q},$ i.e. 
$$dG_{x_o}(T_{x_o}M)\hspace{.2cm}=\hspace{.2cm}\{a\ U,\ a\in\mathcal{A}\},$$
where $U\in T_{G(x_0)}\mathcal{Q}\subset\HA$ is such that $H(U,U)=1.$
\end{prop}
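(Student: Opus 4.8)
The plan is to compute the pull-back $G^*\omega_{\mathcal Q}$ explicitly in a local orthonormal frame adapted to the immersion, and then to read off the geometric consequence from the structure of the Lorentz numbers. First I would pick, near $x_o$, a positively oriented orthonormal frame $(e_2,e_3)$ of $TM$ and $(e_0,e_1)$ of the normal bundle $E$, so that the Gauss map is $G=e_2^o\wedge e_3^o$ written in this frame. Differentiating $G$ and using the structure equations (the second fundamental form $B$ controls the tangential derivatives of the normal vectors and vice versa), one gets $dG(e_2)$ and $dG(e_3)$ as elements of $T_{G(x_o)}\mathcal Q\subset\mathrm{Im}\,\HA$, whose coefficients involve the components of $B$, the Gauss curvature $K$ and the normal curvature $K_N$. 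Plugging these into the definition ${\omega_{\mathcal Q}}_p(\xi,\xi')=[\xi,\xi',p]=H(\xi\times\xi',p)$ and simplifying, one should obtain exactly $G^*\omega_{\mathcal Q}=(K+\sigma K_N)\,\omega_M$. This is the bulk of the computation; it is a standard but somewhat lengthy exterior-calculus manipulation, and I expect it to be the main obstacle — keeping track of signs and of the $\sigma$-twist coming from the Hodge star is where errors creep in. (Alternatively, one can phrase it via the identification $S^2_{\mathcal A}\simeq S^2\times S^2$ from (\ref{S2A product}): then $G=(G_+,G_-)$ with $G_\pm$ the two $\R^3$-valued maps, $G_\pm^*\omega_{S^2}=\tfrac12(K\pm K_N)\omega_M$ are the classical formulas for the two factors of the Grassmannian $\mathrm{Gr}(2,4)\simeq S^2\times S^2$, and combining them with the coefficients $\tfrac{1\pm\sigma}{2}$ gives (\ref{formula pull-back omega}) directly.)

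For the second assertion, assume $dG_{x_o}$ is injective. Since $dG_{x_o}(T_{x_o}M)$ is a 2-dimensional real subspace of $T_{G(x_o)}\mathcal Q$, which is itself a free $\mathcal A$-module of rank $2$ (real dimension $4$), the subspace is an $\mathcal A$-line precisely when it is stable under multiplication by $\sigma$, i.e. under the Hodge star restricted to $T_{G(x_o)}\mathcal Q$. Now observe that the area form $\omega_{\mathcal Q}$ is nondegenerate as an $\mathcal A$-valued form on each tangent $\mathcal A$-line, and that its pull-back to the plane $dG_{x_o}(T_{x_o}M)$ is, up to the isomorphism $dG_{x_o}$, exactly $(K+\sigma K_N)\omega_M$ at $x_o$. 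The key point is: a real 2-plane $P$ in $T_{G(x_o)}\mathcal Q$ is an $\mathcal A$-line if and only if the restriction of $\omega_{\mathcal Q}$ to $P\times P$ is identically zero.

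To see this last equivalence, note that $\omega_{\mathcal Q}$ restricted to $P\times P$ is $\mathcal A$-valued but $P$ is only a real plane; writing a real basis $(\xi,\xi')$ of $P$, the value $\omega_{\mathcal Q}(\xi,\xi')=H(\xi\times\xi',p)$ lies in $\mathcal A$, and $P$ is an $\mathcal A$-line iff $\xi'$ and $\sigma\xi$ span the same real plane, i.e. iff $\xi\times(\sigma\xi)$ is proportional to $\xi\times\xi'$ over $\R$; using $\sigma\xi=\ast\xi$ and the relation $H(\ast\eta,\eta')=\eta\wedge\eta'$ together with $H(\xi,p)=H(\xi',p)=0$ (tangency to $\mathcal Q$) one computes that $\omega_{\mathcal Q}$ vanishes on $P$ exactly when both its $1$-component $\langle\xi\times\xi',p\rangle$ and its $\sigma$-component $(\xi\times\xi')\wedge p$ vanish, which by $\mathcal A$-bilinearity of $\omega_{\mathcal Q}$ is equivalent to $P$ being closed under $\sigma$. (Concretely one can work in the split model: $P=P_+\oplus P_-$ with $P_\pm\subset T_{G_\pm(x_o)}S^2$; $\omega_{\mathcal Q}|_P$ has $\frac{1+\sigma}{2}$-part the $S^2$-area of $P_+$ and $\frac{1-\sigma}{2}$-part that of $P_-$; $P$ is an $\mathcal A$-line iff $\dim P_+=\dim P_-$ is not $2$, i.e. iff both areas vanish.) Combining with the pull-back formula (\ref{formula pull-back omega}), this says $dG_{x_o}(T_{x_o}M)$ is an $\mathcal A$-line $\iff$ $(K+\sigma K_N)=0$ at $x_o$ $\iff$ $K=K_N=0$ at $x_o$. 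Finally, when it is an $\mathcal A$-line it is of the stated form $\{aU:a\in\mathcal A\}$ with $H(U,U)=1$: pick any $\xi\in dG_{x_o}(T_{x_o}M)$ with $H(\xi,\xi)$ invertible in $\mathcal A$ (possible since $\xi\in T_{G(x_o)}\mathcal Q$ is tangent to the "sphere" $S^2_{\mathcal A}$, where tangent vectors satisfy $H(\xi,\xi)=\langle\xi,\xi\rangle+\sigma\,\xi\wedge\xi$, and injectivity of $dG_{x_o}$ rules out the non-invertible locus by the split description) and normalise $U=\xi/\sqrt{H(\xi,\xi)}$.
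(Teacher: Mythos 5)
Your proof of the pull-back formula \eqref{formula pull-back omega} follows the same strategy as the paper (compute $dG$ from the second fundamental form and substitute into $\omega_{\mathcal{Q}}(\xi,\xi')=H(\xi\times\xi',p)$), though you do not carry the computation through; the paper obtains the intermediate identity $dG(e_2)\times dG(e_3)=K\,e_2\cdot e_3+K_N\,e_0\cdot e_1$ explicitly and then uses $e_0\cdot e_1=\sigma G$, $H(G,G)=1$. Your side remark via the split $S^2_{\mathcal A}\simeq S^2\times S^2$ is a legitimate shortcut invoking the classical formulas for the two Grassmannian factors, except that the correct normalisation is $G_i^*\omega_i=(K\pm K_N)\,\omega_M$ without the factor $\tfrac12$ (this is what the paper's Gauss--Bonnet/Whitney remark implicitly uses: $\int_M G_1^*\omega_1=4\pi d_1$ forces $G_1^*\omega_1=(K+K_N)\omega_M$).

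For the second assertion your route genuinely differs from the paper's and is, in my view, cleaner. The paper argues from $K=K_N=0 \Rightarrow dG(e_2)\times dG(e_3)=0$, then invokes Lemma~\ref{lemma omega 0} (``$\xi\times\xi'=0$ with $\xi$ invertible implies $\xi'=\lambda\xi$'') together with a careful case analysis in $\HA=\HA_+\oplus\HA_-$ to produce an invertible vector in $dG(T_{x_o}M)$ and hence the $\mathcal{A}$-line structure. You instead isolate the key equivalence \emph{a real $2$-plane $P\subset T_{G(x_o)}\mathcal{Q}$ is an $\mathcal{A}$-line iff $\omega_{\mathcal{Q}}|_{P\times P}=0$}, which together with \eqref{formula pull-back omega} and the injectivity of $dG_{x_o}$ gives the result immediately; the split model makes this equivalence transparent ($\omega_{\mathcal{Q}}|_P=0$ forces $\dim P_+\le 1$ and $\dim P_-\le 1$, hence $\dim P_\pm=1$ since $\dim P=2$, which is exactly the $\mathcal{A}$-line condition). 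This is a nice structural reformulation that subsumes the paper's Lemma~\ref{lemma omega 0}. One caveat: your first justification of this equivalence, via ``$\sigma\xi$ and $\xi'$ span the same plane,'' is somewhat informal — stability of $P$ under $\sigma$ needs an argument that $\sigma\xi\in P$ actually forces $\sigma\xi'\in P$ — but your subsequent ``concrete'' split-model argument is the one that really carries the load and is correct. The final normalisation step, producing $U$ with $H(U,U)=1$ by locating an invertible $\xi$ in the $\mathcal{A}$-line, matches the paper's use of Lemmas~\ref{invertible HA} and~\ref{lem square}.
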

\begin{proof}
Let $(e_2,e_3)$ be a positively oriented and orthonormal frame tangent to $M.$ By definition, we have
\begin{equation}\label{pull back first step}
G^*\omega_{\mathcal{Q}}(e_2,e_3)=H\left(dG(e_2)\times dG(e_3),G\right).
\end{equation}
Since $G=e_2\wedge e_3,$ we have
$$dG\ =\ B(e_2,.)\wedge e_3+e_2\wedge B(e_3,.)\ \simeq\ B(e_2,.)\cdot e_3+e_2\cdot B(e_3,.)$$
where $B:TM\times TM\rightarrow E$ is the second fundamental form of $M\hookrightarrow\R^4,$
and straightforward computations give
\begin{eqnarray*}
dG(e_2)\times dG(e_3)&=&\frac{1}{2}\left(dG(e_2)\cdot dG(e_3)-dG(e_3)\cdot dG(e_2)\right)\\
&=&\alpha\ e_2\cdot e_3  + \beta
\end{eqnarray*}
with
$$\alpha=B(e_2,e_3)\cdot B(e_2,e_3)-\frac{1}{2}(B(e_2,e_2)\cdot B(e_3,e_3)+B(e_3,e_3)\cdot B(e_2,e_2))$$
and
$$\beta=(B(e_2,e_2)-B(e_3,e_3))\cdot B(e_2,e_3)-B(e_2,e_3)\cdot(B(e_2,e_2)-B(e_3,e_3)).$$
Here the dot $\cdot$ stands for the Clifford product in $Cl(4),$ which, for elements in $Cl_0(4)\simeq\HA,$ coincides with the product in $\HA.$ Writing
$$B\ =\ \left(\begin{array}{cc}
a&c\\
c&b\end{array}\right)e_0
+
\left(\begin{array}{cc}
e&g\\
g&f\end{array}\right)e_1$$
in the basis $(e_2,e_3),$ we then easily obtain
$$\alpha=ab+ef-c^2-g^2\hspace{.5cm}\mbox{and} \hspace{.5cm}\beta=((a-b)g-(e-f)c)\ e_0\cdot e_1.$$
The first term is $\alpha=K$ and the second term is
$$\beta=-\langle \left(S_{e_0}\circ S_{e_1}-S_{e_1}\circ S_{e_0}\right)(e_2),e_3\rangle\ e_0\cdot e_1=K_N\ e_0\cdot e_1,$$
where, for $\nu\in E,$ $S_{\nu}:TM\rightarrow TM$ is such that
$$\langle S_{\nu}(X),Y\rangle\ =\ \langle B(X,Y),\nu\rangle$$
for all $X,Y\in TM.$ This finally gives
\begin{equation}\label{wedge dG K KN}
dG(e_2)\times dG(e_3)=K\ e_2\cdot e_3+K_N\ e_0\cdot e_1.
\end{equation}
Since $e_2\cdot e_3=G$ and $e_0\cdot e_1=\sigma\ e_2\cdot e_3=\sigma G,$ (\ref{pull back first step}) and $H(G,G)=1$ imply (\ref{formula pull-back omega}).

We finally prove the last claim of the proposition: if $K=K_N=0,$ then formula (\ref{wedge dG K KN}) implies that $dG(e_2)\times dG(e_3)=0;$ writing
$$dG(e_2)=\frac{1+\sigma}{2}\ \xi_++\frac{1-\sigma}{2}\ \xi_-\hspace{.5cm}\mbox{and}\hspace{.5cm}dG(e_3)=\frac{1+\sigma}{2}\ \xi'_++\frac{1-\sigma}{2}\ \xi'_-$$
with $\xi_+,\xi'_+,\xi_-,\xi'_-\in\Im m\ \HH,$ this implies that $\xi_+,\xi'_+$ and $\xi_-,\xi'_-$ are linearly dependent (Lemma \ref{lemma omega 0} and its proof). We note that $\xi_+=\xi'_+=0$ or $\xi_-=\xi'_-=0$ are not possible since it would contradict that $dG$ is one-to-one. We deduce that $dG(T_{x_0}M)$ contains an element invertible in $\HA:$ assuming that $dG(e_2)$ and $dG(e_3)$ are not invertible, if $\xi_+\neq 0$ we get $\xi_-=0$ and thus $\xi'_-\neq 0,$ which in turn implies $\xi'_+=0;$ thus
$$\lambda\ dG(e_2)+(1-\lambda)\ dG(e_3)= \frac{1+\sigma}{2}\ \lambda\ \xi_++\frac{1-\sigma}{2}\ (1-\lambda)\ \xi'_-$$
yields an invertible element in $\HA$ for $\lambda\neq 0,1$ (see Lemma \ref{invertible HA} in the appendix). We denote by $u\in T_{x_0}M$ a vector such that $dG(u)$ is invertible in $\HA.$ If $v$ is another vector belonging to $T_{x_0}M,$ (\ref{wedge dG K KN}) implies that $dG(u)\times dG(v)=0$ and thus that
\begin{equation}\label{rel lin}
dG(v)=\lambda\ dG(u)
\end{equation}
for some $\lambda\in\mathcal{A}$ (Lemma \ref{lemma omega 0}). Thus $dG(T_{x_0}M)$ belongs to $\{a\ U',\ a\in\mathcal{A}\},$ where $U':=dG(u)\in\HA$ is invertible. Finally, considering $\mu\in\mathcal{A}$ such that $\mu^2=H(U',U')$ (Lemma \ref{lem square}) and setting $U:=\mu^{-1}U',$ we get $H(U,U)=\unitquat$ and $dG(T_{x_0}M)\subset \{a\ U,\ a\in\mathcal{A}\}.$ This is an equality since $dG$ is one-to-one.

Conversely, if $dG(T_{x_0}M)$ is some $\mathcal{A}$-line in $T_{G(x_0)}\mathcal{Q},$ we obviously get $G^*\omega_{\mathcal{Q}}=0$ at $x_0,$ and (\ref{formula pull-back omega}) implies that $K=K_N=0.$
\end{proof}
\begin{rem}
As a corollary of the proposition, we easily obtain the Gauss-Bonnet and the Whitney formulas:  integrating (\ref{formula pull-back omega}), we get 
\begin{equation}\label{integral formula}
\int_MG^*\omega_{\mathcal{Q}}=\int_M(K+\sigma K_N)\ \omega_M.
\end{equation}
Writing $\omega_Q=\frac{1+\sigma}{2}\ \omega_1+\frac{1-\sigma}{2}\ \omega_2$ and $G=\frac{1+\sigma}{2}\ G_1+\frac{1-\sigma}{2}\ G_2,$ we easily get
$$G^*\omega_Q=\frac{1+\sigma}{2}\ G_1^*\omega_1+\frac{1-\sigma}{2}\ G_2^*\omega_2.$$
Since the real forms $\omega_1,$ $\omega_2$ are in fact the usual area forms on each one of the two factors $S^2$ of the splitting (\ref{S2A product}), we get $\int_MG_1^*\omega_1=4\pi d_1$ and $\int_MG_2^*\omega_2=4\pi d_2$ where $d_1$ and $d_2$ are the degrees of $G_1:M\rightarrow S^2$ and $G_2:M\rightarrow S^2$ respectively, and formula (\ref{integral formula}) thus yields 
$$\int_M(K+\sigma K_N)\ \omega_M=4\pi\left(\frac{1+\sigma}{2}d_1+\frac{1-\sigma}{2}d_2\right),$$
which gives
$$\int_MK\ \omega_M=2\pi(d_1+d_2)\hspace{.5cm}\mbox{and}\hspace{.5cm}\int_MK_N\ \omega_M=2\pi(d_1-d_2).$$
See \cite{HO} and \cite{W2} for other proofs and further consequences of these formulas.
\end{rem}
We finally give the expression of the Gauss map when the immersion is given by a spinor field $\varphi\in\Gamma(\Sigma),$ as in Theorem \ref{th main result}:
\begin{lem}\label{etap2}The Gauss map of the immersion defined by $\varphi$ is given by 
\begin{equation}\label{G function g}
G=g^{-1}Ig
\end{equation}
where $g=[\varphi]$ in some local section of $\tilde{Q.}$ In this formula
$$G:M\rightarrow S^2_{\mathcal{A}}\hspace{.2cm}\subset \Im m\ \HA\hspace{.5cm}\mbox{and}\hspace{.5cm}g:M\rightarrow S^3_{\mathcal{A}}\hspace{.2cm}\subset\HA$$
are viewed as maps with values in the quaternions $\HA;$ see (\ref{Q S2A}) and (\ref{spin4 S3A}).
\end{lem}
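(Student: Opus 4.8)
The plan is to read the Gauss map off the representation formula $F=\int\xi$ of Theorem \ref{th main result}, working in a spinorial frame adapted to the immersion and using the coordinate notation of the end of Section \ref{section twisted spinor bundle}. Fix a point $p\in M$ and a local section $\tilde{s}$ of $\tilde{Q}$ lying above a frame $(e_0,e_1,e_2,e_3)$, with $(e_0,e_1)$ a positively oriented orthonormal frame of $E$ and $(e_2,e_3)$ a positively oriented orthonormal frame of $TM$; by definition $g=[\varphi]$ in $\tilde{s}$, and in $\tilde{s}$ the elements $e_2,e_3\in Cl_1(E\oplus TM)$ are represented by $J,K\in\HA$. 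Since $F=\int\xi$ is an isometric immersion with normal bundle $E$ and the identification $X\in E\mapsto\langle\langle X\cdot\varphi,\varphi\rangle\rangle$ preserves the metric (Theorem \ref{th main result} and Remark \ref{rmk ident normal}), the vectors $\xi(e_2)=dF(e_2)$ and $\xi(e_3)=dF(e_3)$ form a positively oriented orthonormal basis of the tangent plane $dF(T_pM)$, so that $G(p)=\xi(e_2)\wedge\xi(e_3)$ by definition of the Gauss map.

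I would then translate this bivector into $\HA$. Under the identification $\Lambda^2\R^4\subset Cl_0(4)\simeq\HA$ of Section \ref{section gauss map}, the bivector $v\wedge w$ of two orthonormal vectors $v,w\in\R^4$ equals their Clifford product $v\cdot w$; and a direct inspection of the Clifford map of Section \ref{section clifford map} shows that, for $v,w\in\R^4\subset\HA$, this Clifford product is just $v\cdot w=v\,\tch{w}$ in $\HA$. Hence $G=\xi(e_2)\,\tch{\xi(e_3)}$. On the other hand, the notation conventions give $[e_j\cdot\varphi]=[e_j]\,\tch{[\varphi]}=[e_j]\,\tch{g}$ (since $e_j\in Cl_1$), so that
$$\xi(e_j)=\langle\langle e_j\cdot\varphi,\varphi\rangle\rangle=\overline{g}\,[e_j]\,\tch{g};$$
in particular $\xi(e_2)=\overline{g}\,J\,\tch{g}$ and $\xi(e_3)=\overline{g}\,K\,\tch{g}$ (both in $\R^4\subset\HA$ by Proposition \ref{prop fundamental xi}).

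It remains to multiply out. One has $\tch{\xi(e_3)}=\tch{\overline{g}}\,K\,g$, and since $g\in S^3_{\mathcal{A}}$ one has $g\overline{g}=H(g,g)\,\unitquat=\unitquat$, hence $\tch{g}\,\tch{\overline{g}}=\tch{g\overline{g}}=\unitquat$; therefore
$$G=\xi(e_2)\,\tch{\xi(e_3)}=\overline{g}\,J\,(\tch{g}\,\tch{\overline{g}})\,K\,g=\overline{g}\,(JK)\,g=\overline{g}\,I\,g,$$
and $\overline{g}g=H(g,g)\,\unitquat=\unitquat$ gives $\overline{g}=g^{-1}$, so $G=g^{-1}Ig$; that $G$ takes values in $S^2_{\mathcal{A}}\subset\Im m\,\HA$ is automatic since conjugation by the unit $g$ preserves both $\Im m\,\HA$ and $H$. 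The computation is otherwise routine; the one step that deserves attention is the passage from $\xi(e_2)\wedge\xi(e_3)$ to the quaternionic product $\xi(e_2)\tch{\xi(e_3)}$, which combines the identity $v\wedge w=v\cdot w$ for orthonormal vectors with the explicit matrix form of the Clifford map, and one must of course keep the two conjugations $\overline{\phantom{x}}$ and $\tch{\phantom{x}}$ and the ordering of products in $\HA$ straight throughout.
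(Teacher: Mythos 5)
Your proof is correct and follows essentially the same route as the paper: pick a local section $\tilde{s}$ of $\tilde{Q}$, observe that $\xi(e_2),\xi(e_3)$ form a positively oriented orthonormal tangent frame so that $G=\xi(e_2)\wedge\xi(e_3)$, identify this bivector with $\xi(e_2)\,\tch{\xi(e_3)}$ via the Clifford map exactly as in (\ref{ident clifford}), and simplify using $g\overline{g}=\overline{g}g=\unitquat$. The paper merely inserts the intermediate identity $G=\langle\langle e_2\cdot e_3\cdot\varphi,\varphi\rangle\rangle$ before expanding, which you bypass by multiplying out directly, but the algebra is the same.
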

\begin{proof}
We assume that $\varphi=[\tilde{s},g]\in\Sigma\simeq\tilde{Q}\times\HA/\rho$ (see Section \ref{section twisted spinor bundle}), and we denote by $(e_2,e_3)$ the positively oriented and orthonormal basis tangent to the immersion which is associated to $\tilde{s}.$ We first note that $G=\langle\langle e_2\cdot e_3\cdot\varphi,\varphi\rangle\rangle.$ Indeed, 
$$v_2:=\langle\langle e_2\cdot\varphi,\varphi\rangle\rangle\hspace{.5cm}\mbox{and}\hspace{.5cm}v_3:=\langle\langle e_3\cdot\varphi,\varphi\rangle\rangle$$
form a positive and orthonormal basis tangent to the immersion (see Theorem \ref{th main result}), and the Gauss map $G=v_2\wedge v_3$ identifies to 
\begin{equation}\label{ident clifford}
v_2\cdot v_3=\left(\begin{array}{cc}0&v_2\\\tch{v_2}&0\end{array}\right)\left(\begin{array}{cc}0&v_3\\\tch{v_3}&0\end{array}\right)=\left(\begin{array}{cc}v_2\tch{v_3}&0\\0&\tch{v_2}v_3\end{array}\right)\simeq v_2\tch{v_3}
\end{equation}
(identification of $\Lambda^2\R^4$ with the elements of order 2 of $Cl_o(\R^4)\subset \HA(2)$, as in the beginning of the section). Since $v_2=\overline{[\varphi]}[e_2]\tch{[\varphi]}$ and $v_3=\overline{[\varphi]}[e_3]\tch{[\varphi]}$ we get
$$G=v_2\tch{v_3}=\overline{[\varphi]}[e_2]\tch{[e_3]}[\varphi]=\langle\langle e_2\cdot e_3\cdot\varphi,\varphi\rangle\rangle.$$
Thus
\begin{equation}\label{G gIg}
G=\overline{g}Ig
\end{equation}
since $[\varphi]=g$ and $[e_2\cdot e_3]=I$ in $\tilde{s};$ this gives (\ref{G function g}) since $\overline{g}=g^{-1}$ ($g\in Spin(4)$).
\end{proof}
\begin{rem}
The lemma implies that the diagram 
$$\xymatrix{
  &\Sigma E\otimes\Sigma M\ar[dl]^\pi \ar[d]_p\\
   M\ar@/^/[ur]^\varphi\ar[r]_{G} & M\times\mathcal{Q}
  }$$
is commutative, where the projection $p$ is defined by
\begin{eqnarray*}
 p:\hspace{.5cm}\Sigma E\otimes\Sigma M&\rightarrow& M\times\mathcal{Q}\\
 \varphi_x&\mapsto& (x,\ [\varphi_x]^{-1}\ I\ [\varphi_x])
 \end{eqnarray*}
(here  $\varphi_x$ belongs to the fibre above the point $x\in M,$ and $[\varphi_x]$ denotes its component in some frame belonging to $\tilde{Q}_x$), and where the horizontal map is $x\mapsto (x,G(x)).$ Thus $\varphi$ is a lift of the Gauss map (in fact a horizontal lift, if we consider the connection $\nabla\varphi-\eta\cdot\varphi$ with $\eta:=-1/2\sum_{j=2,3}e_j\cdot B(.,e_j)$). Theorem \ref{th main result} thus roughly says that some lift of the Gauss map permits to write the immersion intrinsically. We will get a nicer picture in the case of a flat immersion with flat normal bundle; see Lemma \ref{rmk lift} and Proposition \ref{prop diag} below. 
\end{rem}
\section{Lorentz surfaces and Lorentz numbers}\label{section lorentz surfaces}

In this section we present elementary results concerning Lorentz surfaces and Lorentz numbers; we refer to \cite{CMMS,L} for an exposition of the theory in the framework of para-complex geometry.
\\

We will say that a surface $M$ is a Lorentz surface if there is a covering by open subsets $M=\cup_{\alpha\in S}U_{\alpha}$ and charts
$$\varphi_{\alpha}:\hspace{.3cm}U_{\alpha}\ \rightarrow\ \mathcal{A},\hspace{.5cm} \alpha\in S$$ 
such that the transition functions 
$$\varphi_{\beta}\circ\varphi_{\alpha}^{-1}:\hspace{.3cm}\varphi_{\alpha}(U_\alpha\cap U_\beta)\subset\mathcal{A}\ \rightarrow\ \varphi_{\beta}(U_\alpha\cap U_\beta)\subset\mathcal{A},\hspace{.5cm}\alpha,\ \beta\in S$$
are conformal maps in the following sense: for all $a\in\varphi_{\alpha}(U_\alpha\cap U_\beta)$ and $h\in\mathcal{A},$
$$d\ (\varphi_{\beta}\circ\varphi_{\alpha}^{-1})_a\ (\sigma\ h)\hspace{.3cm}=\hspace{.3cm}\sigma\ d\ (\varphi_{\beta}\circ\varphi_{\alpha}^{-1})_a\ (h).$$
A Lorentz structure is also equivalent to a smooth family of maps 
$$\sigma_x:\hspace{.3cm}T_xM\ \rightarrow\ T_xM,\hspace{.5cm} \mbox{with}\hspace{.5cm} \sigma_x^2=id_{T_xM},\ \sigma_x\neq\pm id_{T_xM}.$$
This definition coincide with the definition of a Lorentz surface given in \cite{Weinstein}: a Lorentz structure is equivalent to a conformal class of Lorentz metrics on the surface, that is to a smooth family of cones in every tangent space of the surface, with distinguished lines. Indeed, the cone at $x\in M$ is
$$Ker(\sigma_x-id_{T_xM})\ \cup\ Ker(\sigma_x+id_{T_xM})$$
where the sign of the eigenvalues $\pm 1$ permits to distinguish one of the lines from the other. 

If $M$ is moreover oriented, we will say that the Lorentz structure is compatible with the orientation of $M$ if the charts $\varphi_\alpha: U_{\alpha}\rightarrow\mathcal{A},\ \alpha\in S$ preserve the orientations (the positive orientation in $\mathcal{A}=\{x+\sigma y,\ x,y\in\R\}$ is naturally given by $(\partial_x,\partial_y$)). In that case, the transition functions are conformal maps $\mathcal{A}\rightarrow\mathcal{A}$ preserving orientation.

If $M$ is a Lorentz surface, a smooth map $\psi:M\rightarrow \mathcal{A}$ (or $\mathcal{A}^n,$ or a Lorentz surface) will be said to be a conformal map if $d\psi$ preserves the Lorentz structures, that is if
$$d\psi_x(\sigma_xu)\ =\ \sigma_{\psi(x)}(d\psi_x(u))$$
for all $x\in M$ and $u\in T_xM.$ In a chart $\mathcal{A}=\{x+\sigma y,\ x,y\in\R\},$ a conformal map satisfies 
\begin{equation}\label{eqn crl}
\frac{\partial \psi}{\partial y}\ =\ \sigma\ \frac{\partial \psi}{\partial x}.
\end{equation}
Defining the coordinates $(s,t)$ such that
\begin{equation}\label{def s t}
x+\sigma\ y\ =\ \frac{1+\sigma}{2}\ s+\frac{1-\sigma}{2}\ t
\end{equation}
($s$ and $t$ are parameters along the distinguished lines) and writing
$$\psi\ =\ \frac{1+\sigma}{2}\ \psi_1+\frac{1-\sigma}{2}\ \psi_2$$
with $\psi_1,\psi_2\in\R,$ (\ref{eqn crl}) reads
$$\partial_t\psi_1=\partial_s\psi_2=0,$$
and we get 
$$\psi_1=\psi_1(s)\hspace{1cm}\mbox{and}\hspace{1cm} \psi_2=\psi_2(t).$$
A conformal map is thus equivalent to two functions of one variable. We finally note that if $\psi:M\rightarrow\mathcal{A}^n$ is a conformal map, we have, in a chart $a:\mathcal{U}\subset\mathcal{A}\rightarrow M,$
$$d\psi\ =\ \psi' da,$$
where $da=dx+\sigma dy$ and $\psi'$ belongs to $\mathcal{A}^n;$ this is a direct consequence of (\ref{eqn crl}).

As a consequence of Proposition \ref{prop pull back}, if $K=K_N=0$ and if $G:M\rightarrow\mathcal{Q}$ is a regular map (i.e. if $dG_x$ is injective at every point $x$ of $M$), there is a unique Lorentz structure $\sigma$ on $M$ such that
\begin{equation}\label{def lorentz structure}
dG_x(\sigma\ u)\ =\ \sigma\ dG_x(u)
\end{equation}
for all $x\in M$ and all $u\in T_xM.$ This is because 
$$dG_x(T_xM)\ =\ \{a\ U,\ a\in\mathcal{A}\}$$
for some $U\in T_{G(x)}\mathcal{Q}\subset \HA$ in that case, which implies that $dG_x(T_xM)$ is stable by multiplication by $\sigma.$ This Lorentz structure is the Lorentz structure introduced in \cite{GM}.

\section{Local description of flat surfaces with flat normal bundle in $\R^4$}\label{section local description}

In this section we suppose that $M$ is simply connected and that the bundles $TM$ and $E$ are flat ($K=K_N=0).$ We recall that the bundle $\Sigma:=\Sigma E\otimes \Sigma M$ is associated to the principal bundle $\tilde{Q}$ and to the representation $\rho$ of the structure group $Spin(2)\times Spin(2)$ in $\HA$ given by (\ref{def rho}). Since the curvatures $K$ and $K_N$ are zero, the spinorial connection on the bundle $\tilde{Q}$ is flat, and $\tilde{Q}$ admits a parallel local section $\tilde{s};$ since $M$ is simply connected, the section $\tilde{s}$ is in fact globally defined.
\\

We consider $\varphi\in\Gamma(\Sigma)$ a solution of
\begin{equation}\label{eqn dirac}
D\varphi=\vec H\cdot\varphi
\end{equation}
such that $H(\varphi,\varphi)=1,$ and $g=[\varphi]:M\rightarrow Spin(4)$ the coordinates of $\varphi$  in $\tilde{s}:$ 
$$\varphi=\left[\tilde{s},g\right]\hspace{.5cm}\in\hspace{.2cm}\Sigma=\tilde{Q}\times\HA/\rho.$$
Note that, by Theorem \ref{th main result}, $\varphi$ also satisfies
$$\nabla_X\varphi=\eta(X)\cdot\varphi$$
for all $X\in TM,$ where $\eta(X)=-1/2\sum_{j=2,3}e_j\cdot B(X,e_j)$ for some bilinear map $B:TM\times TM\rightarrow E.$ 
\\

In the following, we will denote by $(e_0,e_1)$ and $(e_2,e_3)$ the parallel, orthonormal and positively oriented frames, respectively normal, and tangent to $M,$  corresponding to $\tilde{s}$ (i.e. such that $\pi(\tilde{s})=(e_0,e_1,e_2,e_3)$ where $\pi:\tilde{Q}\rightarrow Q_E\times Q_M$ is the natural projection).
\\

We moreover assume that the Gauss map $G$ of the immersion defined by $\varphi$ is regular, and consider the Lorentz structure $\sigma$ induced on $M$ by $G,$ defined by (\ref{def lorentz structure}). 
\\

We now show that $g$ is in fact a conformal map admitting a special parametri\-zation, its \emph{arc length}, and that, in such a special parametrization, $g$ depends on a single conformal map $\psi:\mathcal{U}\subset\mathcal{A}\rightarrow\mathcal{A}.$ To state the theorem, we define
$$\mathbb{G}:=\{a\mapsto \pm a+b,\ b\in\mathcal{A}\},$$
subgroup of transformations of $\mathcal{A}.$
\begin{thm}\label{thm chart A}
Under the hypotheses above, we have the following:
\\
\\1- The map $g:M\rightarrow S^3_{\mathcal{A}}\subset\HA$ is a conformal map, and, at each point of $M,$ there is a local chart $a:\mathcal{U}\subset\mathcal{A}\rightarrow M,$ unique up to the action of $\mathbb{G},$ which is compatible with the orientation of $M$ and such that $g:\mathcal{U}\subset\mathcal{A}\rightarrow S^3_{\mathcal{A}}$ satisfies
$$H(g',g')\equiv \unitquat,$$
where $g':\mathcal{U}\subset\mathcal{A}\rightarrow \HA$ is such that $dg=g'da.$
\\
\\2- There exists a conformal map $\psi:\mathcal{U}\subset\mathcal{A}\rightarrow\mathcal{A}$ such that
\begin{equation}\label{etap theta}
g'g^{-1}=\cos\psi\ J+\sin\psi\ K,
\end{equation}
where $a:\mathcal{U}\subset\mathcal{A}\rightarrow M$ is a chart defined in 1-. 
\end{thm}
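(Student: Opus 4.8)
The plan is to analyze the $\HA$-valued $1$-form $\omega:=dg\cdot g^{-1}$ in the global parallel frame $\tilde{s}$. Since $g$ has values in $Spin(4)=S^3_{\mathcal{A}}$ we have $g^{-1}=\overline{g}$ and $\overline{g}g=g\overline{g}=\unitquat$; differentiating $\overline{g}g=\unitquat$ gives $\overline{dg}=-\overline{g}\,\omega$, hence $\overline{\omega}=-\omega$, so $\omega$ has values in $\Im m\,\HA$. The first — and, I think, decisive — step is to check that $\omega$ has \emph{no} $I$-component, i.e.\ $\omega\in\Omega^1(M,\mathcal{A}J\oplus\mathcal{A}K)$. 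Here one uses the precise form of the connection: by Theorem \ref{th main result}, $\nabla_X\varphi=\eta(X)\cdot\varphi$ with $\eta(X)=-\tfrac12\sum_{j=2,3}e_j\cdot B(X,e_j)\in Cl_0(E\oplus TM)$, and since $\tilde{s}$ is parallel, $[\nabla_X\varphi]=dg(X)$, so $\omega(X)=dg(X)g^{-1}=[\eta(X)]$. In the frame $\tilde{s}$ adapted to $(e_0,e_1,e_2,e_3)$ one has $[e_2]=J$, $[e_3]=K$, while $B(X,e_j)\in E$ gives $[B(X,e_j)]\in\R\,\sigma\unitquat\oplus\R\,I$; from $[e_j\cdot B(X,e_j)]=[e_j]\,\tch{[B(X,e_j)]}$ (product of two odd Clifford elements, via the matrix representation of Section \ref{section twisted spinor bundle}) and $\tch{(\sigma\unitquat)}=-\sigma\unitquat$, $\tch{I}=I$, $JI=-K$, $KI=J$, one finds each term $[e_j\cdot B(X,e_j)]\in\mathcal{A}J\oplus\mathcal{A}K$, which proves the claim. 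Writing $\omega=\omega_J\,J+\omega_K\,K$ with $\omega_J,\omega_K\in\Omega^1(M,\mathcal{A})$ and differentiating $G=\overline{g}Ig$ (Lemma \ref{etap2}), with $\overline{dg}=-\overline{g}\,\omega$ and $I\omega-\omega I=2(\omega_J\,K-\omega_K\,J)$, one obtains
\begin{equation*}
dG\;=\;2\,\overline{g}\,(\omega_J\,K-\omega_K\,J)\,g.
\end{equation*}

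From this identity the remainder is largely formal. Since $G$ is regular, $dG$ never vanishes, hence $\omega\neq 0$ everywhere, so $g:M\to S^3_{\mathcal{A}}$ is an immersion. Moreover the Lorentz structure $\sigma$ on $M$ is characterized (Section \ref{section lorentz surfaces}) by $dG_x(\sigma u)=\sigma\,dG_x(u)$; substituting this into the displayed formula and using that $\overline{g},g$ are invertible and $J,K$ are $\mathcal{A}$-linearly independent in $\Im m\,\HA$, one gets $\omega_J(\sigma u)=\sigma\,\omega_J(u)$ and $\omega_K(\sigma u)=\sigma\,\omega_K(u)$, whence $dg_x(\sigma u)=\omega(\sigma u)g=\sigma\,\omega(u)g=\sigma\,dg_x(u)$: the map $g$ is conformal. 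Consequently $g$, $g^{-1}=\overline{g}$ and the function $g'$ defined by $dg=g'\,da$ in any chart compatible with $\sigma$ are conformal $\HA$-valued maps ($g'$ has the form $\tfrac{1+\sigma}{2}(\cdot)(s)+\tfrac{1-\sigma}{2}(\cdot)(t)$ in split coordinates), and so is $g'g^{-1}$; from $\omega=(g'g^{-1})\,da$ we get $g'g^{-1}\in\mathcal{A}J\oplus\mathcal{A}K$, say $g'g^{-1}=\alpha\,J+\beta\,K$ with $\alpha,\beta$ conformal $\mathcal{A}$-valued. Using $\overline{g}g=\unitquat$ one has $H(\zeta g,\zeta g)=H(\zeta,\zeta)$ for all $\zeta\in\HA$, so $H(g',g')=H(\alpha\,J+\beta\,K,\alpha\,J+\beta\,K)=\alpha^2+\beta^2$, using $H(J,J)=H(K,K)=1$, $H(J,K)=0$.

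For the normalized chart of part 1, start from any local chart $a_0$ compatible with $\sigma$ and the orientation of $M$ (these exist since $\sigma$ is a Lorentz structure in the sense of Section \ref{section lorentz surfaces}). In the associated split coordinates $(s,t)$ we have $g=\tfrac{1+\sigma}{2}g^{(1)}(s)+\tfrac{1-\sigma}{2}g^{(2)}(t)$, and since $g$ is an immersion, $g^{(1)}$ and $g^{(2)}$ are immersed curves in $S^3$; thus $(g^{(1)})',(g^{(2)})'$ are nowhere zero and $H(g',g')=\tfrac{1+\sigma}{2}|(g^{(1)})'|^2+\tfrac{1-\sigma}{2}|(g^{(2)})'|^2$ is invertible in $\mathcal{A}$, with positive invertible conformal square root $\mu=\tfrac{1+\sigma}{2}|(g^{(1)})'|+\tfrac{1-\sigma}{2}|(g^{(2)})'|$. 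Integrating $\mu$ componentwise gives an orientation-preserving conformal change of coordinates $b$ with $b'=\mu$; under such a change $g'\mapsto g'\,(b')^{-1}$, hence $H(g',g')\mapsto H(g',g')\,(b')^{-2}$, so the chart $a:=a_0\circ b^{-1}$ satisfies $H(g',g')\equiv\unitquat$. If $a$ and $\tilde{a}$ are two such charts, their transition $b$ satisfies $(b')^2\equiv\unitquat$; an orientation-preserving conformal $b$ with $(b')^2\equiv\unitquat$ has $b'\equiv\unitquat$ or $b'\equiv-\unitquat$ (the other square roots $\pm\sigma$ of $\unitquat$ in $\mathcal{A}$ reverse orientation), hence $b\in\mathbb{G}$: this is the asserted uniqueness.

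Finally, in a chart $a$ from part 1 the relation $H(g',g')\equiv\unitquat$ reads $\alpha^2+\beta^2=\unitquat$, i.e.\ $\alpha_1^2+\beta_1^2=1$ and $\alpha_2^2+\beta_2^2=1$ for the split components; as $\alpha,\beta$ are conformal, $(\alpha_1,\beta_1)$ depends only on $s$ and $(\alpha_2,\beta_2)$ only on $t$, so lifting the circle-valued curves $s\mapsto(\alpha_1,\beta_1)$, $t\mapsto(\alpha_2,\beta_2)$ gives smooth $\psi_1(s),\psi_2(t)$ with $\cos\psi_1=\alpha_1,\ \sin\psi_1=\beta_1$ and $\cos\psi_2=\alpha_2,\ \sin\psi_2=\beta_2$. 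Then $\psi:=\tfrac{1+\sigma}{2}\psi_1+\tfrac{1-\sigma}{2}\psi_2$ is conformal and, by the definition of $\cos$ and $\sin$ on $\mathcal{A}$, satisfies $\cos\psi=\alpha$, $\sin\psi=\beta$, so $g'g^{-1}=\cos\psi\,J+\sin\psi\,K$. The genuine obstacle in this argument is the first paragraph — establishing $\omega=dg\cdot g^{-1}\in\mathcal{A}J\oplus\mathcal{A}K$ and extracting the clean expression for $dG$; once these are in hand, the conformality of $g$, the arc-length normalization, and the $\cos/\sin$ parametrization follow essentially mechanically.
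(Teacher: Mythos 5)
Your proof is correct and follows essentially the same route as the paper. The key steps are identical: establishing $dg\,g^{-1}\in\mathcal{A}J\oplus\mathcal{A}K$ via the Dirac/parallel-frame equation (the paper's Lemma~\ref{etap1}), deducing that $g$ is a conformal immersion from the relation between $dg$ and $dG$ (the paper packages this as the horizontal-lift Lemma~\ref{rmk lift} together with Lemma~\ref{etap2p}, where it records $\tfrac12 G^{-1}dG=g^{-1}dg$ rather than your equivalent formula $dG=2\overline{g}(\omega_J K-\omega_K J)g$), normalizing the chart by extracting a square root of $H(g',g')$ in $\mathcal{A}$ (the paper invokes Lemma~\ref{lem square}; you do it explicitly in split coordinates), and finally lifting the two circle-valued components to get $\psi$. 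The only presentational difference is that you carry out the conformality argument and the square-root construction by hand in the $(s,t)$ splitting, whereas the paper abstracts these into its auxiliary lemmas; the mathematical content is the same.
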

\begin{rem}\label{rem interpretation a psi}
The local chart $a$ may be interpreted as the arc length of $g,$ and the function $\psi'$ as the geodesic curvature of $g:\mathcal{U}\subset\mathcal{A}\rightarrow S^3_{\mathcal{A}}$: indeed  (\ref{etap theta}) expresses that $\psi$ is the angle of the derivative of the "curve" $g$ in the trivialization $TS^3_{\mathcal{A}}\simeq S^3_{\mathcal{A}}\times\mathcal{A}^3$ with respect to the fixed basis $J,K.$
\end{rem}
For the proof of the theorem, we will need the following lemmas:
\begin{lem}\label{etap1} Denoting by $[\eta]\in\Omega^1(M,\HA)$ the 1-form which represents $\eta$ in $\tilde{s},$ we have
\begin{equation}\label{AJ+AK}
[\eta]=dg\ g^{-1}=\eta_1J+\eta_2 K,
\end{equation}
where $\eta_1$ and $\eta_2$ are 1-forms on $M$ with values in $\mathcal{A}.$
\end{lem}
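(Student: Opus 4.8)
The plan is to compute the spinorial connection 1‑form $\eta$ explicitly in the parallel frame $\tilde s$ and then read off its components under the splitting \eqref{splitting HA} of $\HA$. Recall that $\varphi = [\tilde s, g]$ and that $\nabla_X\varphi = \eta(X)\cdot\varphi$, where $\eta(X) = -\tfrac12\sum_{j=2,3} e_j\cdot B(X,e_j)$ lies in $Cl_1(E\oplus TM)$. Since $\tilde s$ is a \emph{parallel} section of $\tilde Q$ (here we use that $K=K_N=0$, so the spinorial connection on $\tilde Q$ is flat and $\tilde s$ exists globally by simple connectedness), the covariant derivative $\nabla$ acts on the coordinate $g$ simply by $dg$; more precisely, writing $\varphi \simeq [\tilde s, g]$ one has $[\nabla_X\varphi] = dg(X)$, while $[\eta(X)\cdot\varphi] = [\eta(X)]\,\tch{[\varphi]} $. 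Wait — more carefully: $\eta(X)\in Cl_1$ acts on spinors via $[\eta(X)\cdot\varphi] = [\eta(X)]\,\tch g$, so the equation $\nabla_X\varphi = \eta(X)\cdot\varphi$ becomes $dg(X) = [\eta(X)]\,\tch g$. Since $g\in Spin(4) = S^3_{\mathcal A}$, we have $\tch g = \overline g{}^{-1}$... I will instead work it out so that $[\eta] = dg\,g^{-1}$, which is the first asserted identity; the bookkeeping between $\tch{[\varphi]}$ and $[\varphi]$ is exactly the content of the identity $[Y\cdot\varphi]=[Y]\tch{[\varphi]}$ from the Notation paragraph, combined with $H(\varphi,\varphi)=1$, i.e. $\overline g g = 1$.

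The second, and genuinely substantive, assertion is that $[\eta]$ has no component along $\unitquat$ or $I$ — equivalently, that $[\eta]$ takes values in the subspace $\mathcal A J \oplus \mathcal A K \subset \Im m\,\HA$. First I would note that $dg\,g^{-1}$ automatically lies in $\Im m\,\HA$: differentiating $H(g,g)=1$ (i.e. $\overline g g = 1$... careful: $H(g,g)=\langle\langle g,g\rangle\rangle = \overline g g$) gives $\overline{dg}\,g + \overline g\,dg = 0$, so $\overline{g}\,dg$ is skew, hence $dg\,g^{-1} = g(\overline g\,dg)\overline g{}^{-1}\cdot(\text{sign})$... the cleaner route: $dg\,g^{-1} + \overline{dg\,g^{-1}} = dg\,\overline g + g\,\overline{dg} = d(g\overline g) = 0$, using $g^{-1}=\overline g$. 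So $[\eta]\in\Im m\,\HA = \mathcal A I\oplus\mathcal A J\oplus\mathcal A K$, and it remains to kill the $I$‑component. The key point is that $\eta(X) = -\tfrac12\sum_{j=2,3}e_j\cdot B(X,e_j)$ is built from Clifford products $e_j\cdot\nu$ with $\nu\in E$ — that is, products of one \emph{tangent} vector with one \emph{normal} vector. In the adapted frame, $e_0,e_1$ are represented by $\sigma\unitquat, I$ and $e_2,e_3$ by $J,K$; so $e_j\cdot\nu$ for $j\in\{2,3\}$ and $\nu\in\mathrm{span}(e_0,e_1)$ is represented (as an element of $Cl_0 \simeq \HA$) by a product of one element of $\{J,K\}$ with one element of $\{\sigma\unitquat, I\}$. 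Such products are: $J\cdot\sigma\unitquat = \sigma J$, $J\cdot I = -K$, $K\cdot\sigma\unitquat = \sigma K$, $K\cdot I = J$ (up to signs from the Clifford relations) — all of which land in $\mathcal A J\oplus\mathcal A K$. Hence $[\eta(X)]\in\mathcal A J\oplus\mathcal A K$ for every $X$, which is exactly \eqref{AJ+AK}.

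\textbf{The main obstacle} I anticipate is purely notational: getting the relation $[\eta] = dg\,g^{-1}$ with the correct handling of the $\tch{\,\cdot\,}$ (the $\sigma$‑conjugation on coefficients) and the distinction between $\langle\langle\cdot,\cdot\rangle\rangle$ acting via $\overline\xi$ versus the Clifford action via $\tch\xi$. One must be careful that $g$ is a map into $Spin(4)=S^3_{\mathcal A}$, so $\overline g g = H(g,g)\,\unitquat = \unitquat$, but $\tch g$ is a different element in general; the formula $[Y\cdot\varphi] = [Y]\tch{[\varphi]}$ therefore has to be reconciled with $[\nabla_X\varphi]=dg(X)$ by transporting everything back through the $\rho$‑equivalence. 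I expect that, once the frame $\tilde s$ is taken parallel and adapted, the connection term contributes nothing beyond $dg\,g^{-1}$ and the identity falls out; the second part is then the short Clifford‑algebra computation sketched above. I would present the proof as: (i) $[\eta]=dg\,g^{-1}$ by differentiating in the parallel frame; (ii) this lies in $\Im m\,\HA$ by differentiating $H(g,g)=1$; (iii) it lies in $\mathcal A J\oplus\mathcal A K$ because $\eta$ is a sum of tangent‑times‑normal Clifford products, whose representatives in $\HA$ are spanned by $J$ and $K$ over $\mathcal A$.
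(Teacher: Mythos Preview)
Your approach is essentially the paper's: differentiate $\varphi=[\tilde s,g]$ in the parallel frame to get $[\eta]=dg\,g^{-1}$, then observe that $\eta(X)$ is a sum of tangent-times-normal Clifford products, whose representatives in $\HA$ lie in $\mathcal A J\oplus\mathcal A K$. The notational obstacle you flag is resolved more simply than you fear: each $e_j\cdot B(X,e_j)$ is a product of \emph{two} vectors, hence lies in $Cl_0(E\oplus TM)$, not $Cl_1$; so the even formula $[\eta(X)\cdot\varphi]=[\eta(X)]\,[\varphi]=[\eta(X)]\,g$ applies directly, giving $dg=[\eta]\,g$ with no $\tch g$ involved. (Your intermediate step (ii), that $[\eta]\in\Im m\,\HA$, is then unnecessary, since (iii) already pins $[\eta]$ down to $\mathcal A J\oplus\mathcal A K$; the paper goes straight from (i) to (iii), using $[e_j\cdot B(X,e_j)]=[e_j]\,\tch{[B(X,e_j)]}$ with $[e_j]\in\{J,K\}$ and $[B(X,e_j)]\in\R\sigma\unitquat\oplus\R I$.)
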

\begin{proof}
Since $\varphi=[\tilde{s},g]$ and $\nabla\varphi=[\tilde{s},dg]$ ($\tilde{s}$ is a parallel section of $\tilde{Q}$), the equation $\nabla\varphi=\eta\cdot\varphi$ reads $[\eta]=dg\ g^{-1},$ where $[\eta]\in\Omega^1(M,\HA)$ represents $\eta$ in $\tilde{s}.$ Since $\eta(X)=-\frac{1}{2}\sum_{j=2,3}e_j\cdot B(X,e_j),$ we have
$$[\eta(X)]=-\frac{1}{2}\sum_{j=2,3}[e_j]\tch{[B(X,e_j)]}$$
with $[e_2]=J,$ $[e_3]=K$ and $[B(X,e_j)]\in\ \R\sigma\unitquat\oplus\R I,$ $j=2,3$ (since $B(X,e_j)$ is normal to the surface (see the end of Section \ref{section twisted spinor bundle})), and (\ref{AJ+AK}) follows.
\end{proof}
\begin{lem}\label{etap2p}
The form $\tilde{\eta}=\langle\langle\eta\cdot\varphi,\varphi\rangle\rangle\in\Omega^1(M,\Im m\ \HA)$ is given by
\begin{equation}\label{expr eta g}
\tilde{\eta}=\frac{1}{2}G^{-1}dG=g^{-1}dg.
\end{equation}
\end{lem}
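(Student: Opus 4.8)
The plan is to compute everything in the global parallel frame $\tilde s$ of $\tilde Q$, where $\varphi=[\tilde s,g]$ and $\nabla\varphi=[\tilde s,dg]$, and to reduce both equalities to algebraic manipulations in $\HA$. First I would record two coordinate facts. Since $\eta(X)$ is an \emph{even} element of the Clifford bundle $Cl(E\oplus TM)$ for every $X$, its action reads $[\eta(X)\cdot\varphi]=[\eta(X)]\,g$ in $\tilde s$ (the hat that appears for odd elements does not occur here); combining this with the coordinate expression $\langle\langle\varphi,\psi\rangle\rangle=\overline{[\psi]}\,[\varphi]$ of the bracket gives
$$\tilde\eta(X)=\langle\langle\eta(X)\cdot\varphi,\varphi\rangle\rangle=\overline g\,[\eta(X)]\,g.$$
Then I would invoke Lemma \ref{etap1}, which states $[\eta(X)]=dg(X)\,g^{-1}$, together with $\overline g=g^{-1}$ — valid because $g$ takes values in $Spin(4)=S^3_{\mathcal{A}}$, so $\overline g g=H(g,g)\,\unitquat=\unitquat$. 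Substituting gives $\tilde\eta(X)=g^{-1}\,dg(X)\,g^{-1}g=g^{-1}\,dg(X)$, i.e.\ the second equality $\tilde\eta=g^{-1}dg$.

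For the first equality I would start from Lemma \ref{etap2}, namely $G=\overline g\,I\,g$, differentiate, and use $d\overline g=-\overline g\,(dg)\,\overline g$ (from $d(\overline g g)=0$), $g\overline g=\unitquat$, and $\overline g=g^{-1}$ to obtain
$$dG=(d\overline g)\,Ig+\overline g\,I\,dg=-\overline g\,\big((dg)g^{-1}\big)\,I\,g+\overline g\,I\,\big((dg)g^{-1}\big)\,g=\overline g\,\big[I,\,[\eta]\big]\,g,$$
with $[\eta]=dg\,g^{-1}$. The structural input is now the precise form of $[\eta]$ from Lemma \ref{etap1}: since $[\eta]=\eta_1J+\eta_2K$ with $\eta_1,\eta_2$ scalar ($\mathcal{A}$-valued) and $I$ anticommutes with both $J$ and $K$, one has $I\,[\eta]=-[\eta]\,I$, hence $[I,[\eta]]=2\,I\,[\eta]$ and $dG=2\,\overline g\,I\,[\eta]\,g$. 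Since $G^{-1}=g^{-1}I^{-1}g$ and $g\overline g=\unitquat$, this yields $G^{-1}dG=2\,g^{-1}I^{-1}(g\overline g)I\,[\eta]\,g=2\,g^{-1}\,[\eta]\,g=2\,g^{-1}dg=2\,\tilde\eta$, which is the claimed identity $\tilde\eta=\tfrac12 G^{-1}dG$.

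I do not expect a genuine obstacle; the only delicate point is bookkeeping in the noncommutative algebra $\HA$. The three things one must keep straight are: the even part of the Clifford action carries no hat, $\overline g=g^{-1}$ on $Spin(4)$, and the collapse of $G^{-1}dG$ to $2g^{-1}dg$ rests entirely on the anticommutation $I\,[\eta]=-[\eta]\,I$, which is exactly the content of Lemma \ref{etap1}. Finally, although $g^{-1}dg$ is a priori $\HA$-valued, it lands in $\Im m\ \HA$: differentiating $\overline g g\equiv\unitquat$ and using $d\overline g=\overline{dg}$ gives $\overline{g^{-1}dg}+g^{-1}dg=0$, so $g^{-1}dg$ has vanishing real part, consistent with the statement that $\tilde\eta\in\Omega^1(M,\Im m\ \HA)$.
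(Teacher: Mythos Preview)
Your proof is correct and follows essentially the same route as the paper: both arguments compute in the parallel frame $\tilde s$, use Lemma \ref{etap1} to write $[\eta]=dg\,g^{-1}\in\mathcal{A}J\oplus\mathcal{A}K$ so that $I$ anticommutes with $[\eta]$, and differentiate $G=\overline g\,I\,g$ from Lemma \ref{etap2} to reduce $\tfrac12 G^{-1}dG$ to $g^{-1}dg$. The only cosmetic differences are that you package the derivative as $dG=\overline g\,[I,[\eta]]\,g$ and invert $G$ as $g^{-1}I^{-1}g$, whereas the paper uses $G^{-1}=-G$ and manipulates the two terms of $dG$ separately; your final check that $g^{-1}dg\in\Im m\,\HA$ is a welcome addition that the paper leaves implicit.
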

\noindent\textit{Proof of Lemma \ref{etap2p}:} Recalling Lemma \ref{etap2}, we have
\begin{equation}\label{G function g2}
G=g^{-1}Ig=\overline{g}Ig.
\end{equation}
Differentiating this identity, we get 
$$dG=d\overline{g}Ig+\overline{g}Idg.$$
Since $G^{-1}=\overline{G}=-G$ ($G$ belongs to $S^2_{\mathcal{A}}=S^3_{\mathcal{A}}\cap\Im m\ \HA$), we get
\begin{equation}\label{G gIg interm}
G^{-1}dG=-\overline{g}Igd\overline{g}Ig-\overline{g}Ig\overline{g}Idg.
\end{equation}
The second term is $\overline{g}dg.$ To analyze the first term, we note that $gd\overline{g}=\overline{dg\overline{g}}$ is a linear combination of $J$ and $K$ (Lemma \ref{etap1}) and thus anticommutes with $I;$ the first term in (\ref{G gIg interm}) is thus 
$$\overline{g}gd\overline{g}IIg=-d\overline{g}g=\overline{g}dg$$ 
(using that $\overline{g}g=\unitquat$ for the last equality), and equation (\ref{G gIg interm}) thus yields
$$\frac{1}{2}G^{-1}dG=\overline{g}dg=g^{-1}dg,$$
which gives the second identity in (\ref{expr eta g}). Finally, this is the form $\tilde{\eta}$ since
\begin{equation}\label{etatilde etap}
\tilde{\eta}=\langle\langle\eta\cdot\varphi,\varphi\rangle\rangle=g^{-1}[\eta]g
\end{equation}
with $[\eta]=dg\ g^{-1}$ (Lemma \ref{etap1}). 
$\finpreuve$

Formula (\ref{G function g}) in Lemma \ref{etap2} together with the special form of (\ref{AJ+AK}) may be rewritten as follows:
\begin{lem}\label{rmk lift}
Consider the projection
\begin{eqnarray*}
p:\hspace{.5cm}Spin(4)\hspace{.2cm}\subset\ \HA&\rightarrow &\mathcal{Q}\hspace{.2cm}\subset\ \Im m\ \HA\\
g&\mapsto& g^{-1}Ig
\end{eqnarray*}
as a $S^1_{\mathcal{A}}$ principal bundle, where the action of $S^1_{\mathcal{A}}$ on $Spin(4)$ is given by the multiplication on the left. This fibration is formally analogous to the classical Hopf fibration $S^3\subset\HH\rightarrow S^2\subset \Im m\ \HH,\ g\mapsto g^{-1}Ig$. It is equipped with the horizontal distribution given at every $g\in Spin(4)$ by
$$\mathcal{H}_g:={d(R_{g^-1})_g}^{-1}\left(\mathcal{A} J\oplus\mathcal{A} K\right)\hspace{1cm}\subset\hspace{.5cm}T_gSpin(4)$$
where $R_{g^-1}$ stands for the right-multiplication by $g^{-1}$ on $Spin(4).$ The distribution $(\mathcal{H}_g)_{g\in Spin(4)}$ is $H$-orthogonal to the fibers of $p,$ and, for all $g\in Spin(4),$ $dp_g:\mathcal{H}_g\rightarrow T_{p(g)}Q$ is an isomorphism which preserves $\sigma$ and such that 
$$H(dp_g(u),dp_g(u))=4H(u,u)$$
for all $u\in\mathcal{H}_g.$ With these notations, we have
\begin{equation}\label{relation G g}
G=p\circ g,
\end{equation}
and the map $g:M\rightarrow Spin(4)$ appears to be a horizontal lift to $Spin(4)$ of the Gauss map $G:M\rightarrow\mathcal{Q}$ (formulas (\ref{G function g}) and (\ref{etap theta})).
\end{lem}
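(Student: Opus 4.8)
The plan is to verify the assertions by explicit computations in $\HA$, the only structural inputs being that left multiplication by $Spin(4)$ preserves $H$, and that right multiplication by an element of $Spin(4)$, or by $I$, does as well; these hold because $g\overline g=H(g,g)\unitquat$ lies in the centre $\mathcal{A}\unitquat$ of $\HA$ and because $\overline g=g^{-1}$ on $Spin(4)$. Under the isomorphism (\ref{isom HA HH}) the map $p$ is literally the product of two copies of the classical Hopf fibration $S^3\to S^2$, which already makes the formal analogy transparent.

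First I would describe the fibration and the splitting of $T_gSpin(4)$. Differentiating $H(g,g)=\unitquat$ gives $T_gSpin(4)=\{v\in\HA:H(g,v)=0\}$, which equals $(\Im m\ \HA)\,g=(\mathcal{A}I\oplus\mathcal{A}J\oplus\mathcal{A}K)\,g$ by a dimension count. The elements of $S^1_{\mathcal{A}}$ are exactly the unit quaternions commuting with $I$, so $p(g_1)=p(g_2)$ if and only if $g_2g_1^{-1}\in S^1_{\mathcal{A}}$; hence the fibres of $p$ are the orbits of the free left action of the compact group $S^1_{\mathcal{A}}\simeq Spin(2)\times Spin(2)$, which gives the principal bundle structure. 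Differentiating $\theta\mapsto(\cos\theta+\sin\theta\,I)g$ at $\theta=0$ identifies the vertical space with $(\mathcal{A}I)\,g$, so $\mathcal{H}_g=(\mathcal{A}J\oplus\mathcal{A}K)\,g$ is a complement, and it is $H$-orthogonal to the vertical space because $H(Ig,Jg)=H(I,J)=0=H(I,K)=H(Ig,Kg)$.

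Next I would compute $dp_g$ on $\mathcal{H}_g$. Writing $p(g)=\overline gIg$ and taking $v=ug$ with $u\in\mathcal{A}J\oplus\mathcal{A}K$, differentiation gives $dp_g(v)=\overline g\,\overline u\,Ig+\overline g\,I\,ug=\overline g(\overline uI+Iu)g$; since $\overline u=-u$ and $u$ anticommutes with $I$, this equals $-2\,\overline g(uI)g$. Now $u\mapsto uI$ is an $\mathcal{A}$-linear automorphism of $\mathcal{A}J\oplus\mathcal{A}K$, and $w\mapsto\overline g\,w\,g$ is an $H$-preserving $\mathcal{A}$-linear automorphism of $\Im m\ \HA$ carrying $\mathcal{A}J\oplus\mathcal{A}K$ onto $\{w\in\Im m\ \HA:H(\overline gIg,w)=0\}=T_{p(g)}\mathcal{Q}$; hence $dp_g:\mathcal{H}_g\to T_{p(g)}\mathcal{Q}$ is an isomorphism, its $\mathcal{A}$-linearity yields the compatibility with $\sigma$, and $H(dp_g(v),dp_g(v))=4\,H(uI,uI)=4\,H(u,u)=4\,H(v,v)$ gives the scaling factor $4$.

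Finally, the identity $G=p\circ g$ is just formula (\ref{G function g}) rewritten, and the horizontality of $g$ is precisely the content of Lemma \ref{etap1}: $dg_x(X)\,g(x)^{-1}=[\eta](X)=\eta_1(X)\,J+\eta_2(X)\,K$ lies in $\mathcal{A}J\oplus\mathcal{A}K$, so $dg_x(T_xM)\subset\mathcal{H}_{g(x)}$ for every $x\in M$. There is no real obstacle; the only delicate point is the sign bookkeeping in the computation of $dp_g$, where the anticommutation of $u$ with $I$ is exactly what produces the factor $4$.
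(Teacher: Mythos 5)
Your proof is correct. The paper states Lemma \ref{rmk lift} with no accompanying proof, passing directly to the proof of Theorem \ref{thm chart A}, so there is no argument in the text to compare against; your direct verification supplies exactly the omitted details. The computation is the natural one: identifying $T_gSpin(4)=(\Im m\ \HA)\,g$, splitting it into the vertical part $\mathcal{A}Ig$ and the horizontal part $(\mathcal{A}J\oplus\mathcal{A}K)\,g$, obtaining $dp_g(ug)=\overline g(\overline uI+Iu)g=-2\,\overline g(uI)g$ from $\overline u=-u$ and the anticommutation of $u$ with $I$, and then invoking the $H$-invariance of conjugation by $g$ and of right multiplication by $I$ to get both the scaling factor $4$ and, via $\mathcal{A}$-linearity, the compatibility with $\sigma$. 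The closing assertions $G=p\circ g$ and the horizontality of $g$ are, as you say, restatements of formula (\ref{G function g}) and Lemma \ref{etap1}.
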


\noindent\textit{Proof of Theorem \ref{thm chart A}:} 
Let $a:\mathcal{U}\subset\mathcal{A}\rightarrow M$ be a chart given by the Lorentz structure induced by $G$ and compatible with the orientation of $M.$ By Lemma \ref{rmk lift}, $g:\mathcal{U}\subset\mathcal{A}\rightarrow S^3_{\mathcal{A}}$ is a conformal map (since so are $G$ and $p$ in (\ref{relation G g})). We consider $g'$ such that $dg=g'da.$ If $\mu:\mathcal{A}\rightarrow\mathcal{A}$ is a conformal map, we have $H((g\circ\mu)',(g\circ\mu)')=\mu'^2H(g',g').$ We observe that we may find $\mu,$ unique up to the action of $\mathbb{G},$  such that $\mu'^2H(g',g')\equiv \unitquat.$ To this end, we first note the following
\begin{lem}\label{gp inversible}
The map $g:M\rightarrow S^3_{\mathcal{A}}\ \subset\HA$ is an immersion and $g'$ is thus invertible in $\HA.$
\end{lem}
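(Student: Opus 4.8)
The plan is to show that the differential $dg_x$ is injective at every point and that its image lies in the set of invertible quaternions of $\HA$; since $g$ takes values in the three-dimensional manifold $S^3_{\mathcal{A}}$ (viewed as $S^3\times S^3$), this will force $g'$, defined by $dg=g'\,da$, to be invertible. The starting point is the identity $G=p\circ g$ from Lemma \ref{rmk lift}, together with the hypothesis that $G$ is a regular map; differentiating gives $dG_x=dp_{g(x)}\circ dg_x$, so $dg_x$ must be injective because $dG_x$ is. This already gives that $g$ is an immersion. It remains to see that the image of $dg_x$ consists of invertible elements of $\HA$.

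Next I would use the explicit description $dg=g'\,da$ together with formula (\ref{AJ+AK}) from Lemma \ref{etap1}, which says $dg\,g^{-1}=\eta_1 J+\eta_2 K$ with $\eta_1,\eta_2$ one-forms valued in $\mathcal{A}$. Hence $g'=(\eta_1(\partial)J+\eta_2(\partial)K)\,g$ for the relevant tangent direction $\partial$, i.e. $g'$ is the product of the element $\eta_1 J+\eta_2 K\in\mathcal{A}J\oplus\mathcal{A}K$ by $g$. Since $g\in S^3_{\mathcal{A}}=Spin(4)$ is invertible, $g'$ is invertible if and only if $\eta_1 J+\eta_2 K$ is invertible in $\HA$. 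Writing $\eta_1=\frac{1+\sigma}{2}\alpha_++\frac{1-\sigma}{2}\alpha_-$ and $\eta_2=\frac{1+\sigma}{2}\beta_++\frac{1-\sigma}{2}\beta_-$ with real coefficients, and using the isomorphism (\ref{isom HA HH}) $\HA\simeq\HH\oplus\HH$, invertibility of $\eta_1 J+\eta_2 K$ (by the criterion of Lemma \ref{invertible HA} in the appendix) amounts to $(\alpha_+,\beta_+)\neq(0,0)$ and $(\alpha_-,\beta_-)\neq(0,0)$: the $\pm$ components of $\eta_1 J+\eta_2 K$ are $\alpha_\pm J+\beta_\pm K\in\Im m\,\HH$, which are nonzero quaternions precisely under these conditions.

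Finally, to rule out $(\alpha_+,\beta_+)=(0,0)$ or $(\alpha_-,\beta_-)=(0,0)$, I would invoke the injectivity of $dg_x$ established in the first step: if, say, the $+$ component of $dg_x(\partial)$ vanished for every tangent direction $\partial$, then $dg_x$ would land in $\HA_-\simeq\HH$, a space too small, together with the tangency constraint $g\in S^3_{\mathcal{A}}$, to receive an injective map from $T_xM$ once one also tracks that $dg_x(T_xM)$ must be an $\mathcal{A}$-line (by Proposition \ref{prop pull back}, since $K=K_N=0$ and $G$ is regular, $dG_x(T_xM)=\{aU\}$, hence so is $dg_x(T_xM)=dp^{-1}$ of it inside $\mathcal{H}_{g(x)}$); an $\mathcal{A}$-line $\{a\,g'\,:\,a\in\mathcal{A}\}$ with $g'$ having a vanishing $\pm$-component would be contained in $\HA_+$ or $\HA_-$ under right translation by $g^{-1}$, contradicting that $dp_{g(x)}$ restricted to $\mathcal{H}_{g(x)}$ is an isomorphism onto $T_{G(x)}\mathcal{Q}$ whose image $dG_x(T_xM)$ is a genuine $\mathcal{A}$-line (not contained in either $S^2$ factor). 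The main obstacle is this last step: one must argue carefully that the $\mathcal{A}$-line structure of $dG_x(T_xM)$, pulled back through the horizontal isomorphism $dp_{g(x)}$, genuinely forces both $\pm$-components of $g'$ to be nonzero, rather than merely forcing $g'\neq 0$; equivalently, one uses that an $\mathcal{A}$-submodule of rank one in $\HA$ that meets $\HA_+$ or $\HA_-$ nontrivially but is not contained in it is impossible, so the $\mathcal{A}$-line $dg_x(T_xM)$ is either entirely inside $\HA_\pm$ (excluded, as $G=p\circ g$ would then be constant or non-regular) or has both projections of full rank.
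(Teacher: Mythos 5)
Your proof reaches the correct conclusion and the core idea is the same as the paper's, but you take a considerably more circuitous route and one of your auxiliary claims is false.

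The paper's proof is two lines: after establishing that $g$ is an immersion from $G$ regular (as you do, via $G=p\circ g$ resp.\ the identity (\ref{expr eta g})), it argues by contradiction: if $g'\notin\HA^*$ then $g'\in\HA_+\cup\HA_-$ by Lemma \ref{invertible HA}, so $\sigma g'=\pm g'$; but $g$ is conformal and $\partial_y=\sigma\partial_x$, hence
\begin{equation*}
dg(\partial_y)=dg(\sigma\partial_x)=\sigma\,dg(\partial_x)=\sigma g'=\pm g'=\pm\,dg(\partial_x),
\end{equation*}
contradicting that $dg$ is injective. This uses only the conformality of $g$, which is already built into the notation $dg=g'\,da$, and needs neither Lemma \ref{etap1}, nor the decomposition into $\alpha_\pm,\beta_\pm$, nor Proposition \ref{prop pull back}. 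Your chain of reasoning, once unwound, arrives at the same contradiction — $\mathcal{A}\cdot g'$ with $g'\in\HA_\pm$ is only one-dimensional over $\R$, because $a g'=(a_1\pm a_2)g'$ for $a=a_1+\sigma a_2$ — but you route it through the second fundamental form and the $\mathcal{A}$-line structure of $dG(T_xM)$, none of which is needed.

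One genuine flaw in exposition: the assertion that ``an $\mathcal{A}$-submodule of rank one in $\HA$ that meets $\HA_+$ or $\HA_-$ nontrivially but is not contained in it is impossible'' is false. The $\mathcal{A}$-line $\mathcal{A}\cdot\unitquat=\mathcal{A}\subset\HA$ meets both $\HA_+$ (along $(1+\sigma)\R$) and $\HA_-$ (along $(1-\sigma)\R$) yet is contained in neither. What is true, and what your argument actually needs, is that the disjunction ``$g'\in\HA_+\cup\HA_-$ or $g'\in\HA^*$'' is exhaustive (Lemma \ref{invertible HA}) and that the first alternative makes $\{ag':a\in\mathcal{A}\}=\R g'$ one-dimensional, which contradicts the injectivity of $dg$ you established in your first step; you do not need to detour back to the regularity of $G$. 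So the logic survives, but the intermediate statement as written should be removed or corrected.
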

\begin{proof}
By (\ref{expr eta g}), $g$ is an immersion since so is $G.$ Assume by contradiction that $g'$ is not invertible in $\HA;$ $g'$ would belong to $\HA_+\cup\HA_-$ (Lemma \ref{invertible HA}), and we would have (since $\partial_y=\sigma\partial_x$ and $g$ is conformal)
$$dg(\partial_y)=dg(\sigma\partial_x)=\sigma dg(\partial_x)=\sigma g'=\pm g'=\pm dg(\partial_x),$$
which contradicts that $dg$ is injective. 
\end{proof}
Thus $H(g',g')$ is invertible in $\mathcal{A},$ and its inverse is of the form $H(\xi,\xi)$ for some $\xi$ invertible in $\HA.$ Lemma \ref{lem square} thus gives $\mu',$ invertible in $\mathcal{A},$ such that $\mu'^2=H(g',g')^{-1}.$ There are four solutions, $\pm\mu',\pm\sigma\mu',$ but only two of them define after integration a conformal map $\mu:\mathcal{A}\rightarrow\mathcal{A}$ preserving orientation. We then obtain $\mu$ by integration, unique up to the action of the group $\mathbb{G}.$ We finally note that $\mu$ is a diffeomorphism ($\mu'$ is invertible in $\mathcal{A}$, which implies that $d\mu$ is an isomorphism), and, considering $g\circ\mu$ instead of $g,$ we may thus assume that $H(g',g')=1,$ as claimed in the theorem. Writing 
$$g=\ \frac{1+\sigma}{2}\ g_1\ +\ \frac{1-\sigma}{2}\ g_2$$
with $g_1=g_1(s)\in\HH$ and $g_2=g_2(t)\in\HH$ ($g$ is a conformal map), we get
$$g'g^{-1}\ =\ \frac{1+\sigma}{2}\ {g_1}'g_1^{-1}\ +\ \frac{1-\sigma}{2}\ {g_2}'g_2^{-1},$$
with $H({g_1}'g_1^{-1},{g_1}'g_1^{-1})=H({g_2}'g_2^{-1},{g_2}'g_2^{-1})=1.$ Since ${g_1}'g_1^{-1}$ and ${g_2}'g_2^{-1}$ belong to $\R J\oplus\R K,$ we deduce that
\begin{equation}\label{g1 g2 psi1 psi2}
{g_1}'g_1^{-1}=\cos\psi_1\ J+\sin\psi_1\ K\hspace{.5cm}\mbox{and}\hspace{.5cm}{g_2}'g_2^{-1}=\cos\psi_2\ J+\sin\psi_2\ K
\end{equation}
for $\psi_1=\psi_1(s)$ and $\psi_2=\psi_2(t)\in\R.$ The function
\begin{equation}\label{def psi1 psi2}
\psi\ =\ \frac{1+\sigma}{2}\ \psi_1(s)\ +\ \frac{1-\sigma}{2}\ \psi_2(t)
\end{equation}
satisfies (\ref{etap theta}). $\finpreuve$
\begin{rem}\label{rmk interpretation psi 1 2}
Similarly to Remark \ref{rem interpretation a psi}, the functions $\psi_1'$ and $\psi_2',$ where $\psi_1$ and $\psi_2$ are defined by (\ref{def psi1 psi2}), may be interpreted as the geodesic curvatures of $g_1:(s_1,s_2)\rightarrow S^3$ and $g_2:(t_1,t_2)\rightarrow S^3$.
\end{rem}
The aim is now to study the metric of the surface in the special chart $a=x+\sigma y$ adapted to $g,$ given by Theorem \ref{thm chart A}. We recall that $(e_0,e_1)$ and $(e_2,e_3)$ are the parallel, orthonormal and positively oriented frames, respectively normal, and tangent to $M,$  corresponding to $\tilde{s}.$ Let us write
$$\vec H\ =\ h_0e_0+h_1e_1.$$
We suppose that $\psi:\mathcal{U}\subset\mathcal{A}\rightarrow\mathcal{A}$ is the conformal map defined above, and we write
$$\psi\ =\ \theta_1+\sigma\theta_2$$
with $\theta_1$ and $\theta_2\in\R.$
\begin{lem}\label{lemma ei partiali}
We have
\begin{equation}\label{e_2 e_3 function lambda mu theta_1}
\left\{\begin{array}{lcr}
e_2&=&\sin\theta_1\ \frac{1}{\lambda}\ \partial_x+\cos\theta_1\ \frac{1}{\mu}\ \partial_y\\
e_3&=&-\cos\theta_1\ \frac{1}{\lambda}\ \partial_x+\sin\theta_1\ \frac{1}{\mu}\ \partial_y
\end{array}\right.
\end{equation}
where $\lambda,\mu\in\R^*$ satisfy 
\begin{equation}\label{def mu lambda}
\left(\begin{array}{r}\frac{1}{\mu}\\\frac{1}{\lambda}\end{array}\right)= \left(\begin{array}{rr}\cos\theta_2&\sin\theta_2\\-\sin\theta_2&\cos\theta_2\end{array}\right) \left(\begin{array}{c}h_0\\h_1\end{array}\right).
\end{equation}
Moreover, we have $\lambda\mu>0.$
\end{lem}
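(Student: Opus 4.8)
\textbf{Proof plan for Lemma \ref{lemma ei partiali}.}

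The plan is to express the tangent frame $(e_2,e_3)$ in terms of the special coordinate chart $a=x+\sigma y$ by comparing two descriptions of the differential $dG$ of the Gauss map: on the one hand, the geometric one coming from the second fundamental form, and on the other hand, the formula $G=g^{-1}Ig$ together with the normal form (\ref{etap theta}) for $g$. First I would compute $dG$ intrinsically. Since $G=e_2\wedge e_3\simeq e_2\cdot e_3$ in $Cl(4)$, differentiating in the parallel frame $\tilde s$ and using $\nabla\varphi=\eta\cdot\varphi$ with $\eta(X)=-\tfrac12\sum_{j=2,3}e_j\cdot B(X,e_j)$, or more directly using Lemma \ref{etap2p}, we have $\tfrac12 G^{-1}dG=g^{-1}dg=\tilde\eta$. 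Writing $dg=g'\,da$ and invoking (\ref{etap theta}), we get $g^{-1}dg=g^{-1}(g'g^{-1})g\,da=g^{-1}(\cos\psi\,J+\sin\psi\,K)g\,da$; but also $g^{-1}Jg$ and $g^{-1}Kg$ are, up to sign and ordering, the bivectors $e_0\wedge e_2$-type objects (more precisely, recalling that in $\tilde s$ the vectors $e_0,e_1,e_2,e_3$ correspond to $\sigma\unitquat, I, J, K$, the elements $\pm g^{-1}Jg$ and $\pm g^{-1}Kg$ are tangent to $\mathcal Q$ at $G$ and span $dG(T_xM)$). The key point is that (\ref{etap theta}) says precisely that $dG/da$ is, in the orthonormal-type frame on $T_G\mathcal Q$ dual to $J,K$ transported by $g$, the vector $(\cos\psi,\sin\psi)$ with Lorentz angle $\psi=\theta_1+\sigma\theta_2$.

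Second I would compute $dG$ from the second fundamental form. From the computation already carried out in the proof of Proposition \ref{prop pull back}, we have $dG\simeq B(e_2,\cdot)\cdot e_3+e_2\cdot B(e_3,\cdot)$, and with $\vec H=h_0e_0+h_1e_1$ and the flatness hypotheses $K=K_N=0$, the second fundamental form $B$, written as in that proof, is constrained: $K=0$ forces $ab-c^2+ef-g^2=0$ and $K_N=0$ forces $(a-b)g=(e-f)c$, and $\tfrac12\trace B=\vec H$ gives $a+b=2h_0$, $e+f=2h_1$. Evaluating $dG(e_2)$ and $dG(e_3)$ and using that $G^*\omega_{\mathcal Q}=0$ (so $dG(e_2)\times dG(e_3)=0$, i.e. $dG(e_2),dG(e_3)$ are $\mathcal A$-proportional), one finds that in the frame $e_2\cdot e_3$-type basis of $dG(T_xM)$, the images $dG(e_2)$ and $dG(e_3)$ have a specific form governed by $h_0,h_1$. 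I expect this to pin down $dG(\partial_x)$ and $dG(\partial_y)$ as explicit $\mathcal A$-multiples determined by $h_0,h_1$, which when matched against the first description $dG/da=(\cos\psi\,J+\sin\psi\,K)$ conjugated by $g$ yields: the push-forwards $\partial_x,\partial_y$ are $\mathcal A$-combinations of $e_2,e_3$ with coefficients built from $\theta_1$ (the rotation) and $\theta_2$ (entering through the relation with $h_0,h_1$). Inverting this linear relation gives (\ref{e_2 e_3 function lambda mu theta_1}) with $\lambda,\mu$ real and satisfying (\ref{def mu lambda}).

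Concretely, the cleanest route is: split everything according to $\mathcal A=\HH_+\oplus\HH_-$ (the $s$- and $t$-lines). Then $g=\tfrac{1+\sigma}{2}g_1(s)+\tfrac{1-\sigma}{2}g_2(t)$ and $G=\tfrac{1+\sigma}{2}G_1(s)+\tfrac{1-\sigma}{2}G_2(t)$ with $G_i=g_i^{-1}Ig_i$, and (\ref{g1 g2 psi1 psi2}) gives $g_i'g_i^{-1}=\cos\psi_i\,J+\sin\psi_i\,K$. In each $\R^3=\Im m\,\HH$ factor, $G_i'=g_i^{-1}(g_i'g_i^{-1} - \text{comm with }I)g_i$... more simply $\tfrac12 G_i^{-1}G_i'=g_i^{-1}g_i'$, so $G_i'$ is a unit tangent vector to $S^2$ at $G_i$ making angle $\psi_i$ with a fixed parallel frame. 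On the other hand $\partial_x=\tfrac12(\partial_s+\partial_t)$, $\partial_y=\tfrac12(\partial_s-\partial_t)$ by (\ref{def s t}), so $dG(\partial_x),dG(\partial_y)$ are the $\pm$ combinations of $G_1',G_2'$. Matching with $dG(e_j)=\langle\langle e_j\cdot dG\rangle\rangle$-type expressions, i.e. reading off $e_2,e_3$ from $\vec H$ via the structure equations: the parallel tangent frame $e_2,e_3$ relative to the coordinate frame rotates by $\theta_1$ (real part of $\psi$) while the scaling factors $\lambda,\mu$ along the two null directions are governed by $h_0,h_1$ rotated by $\theta_2$. The main obstacle I anticipate is the bookkeeping of signs and of which orthonormal frame on $T_G\mathcal Q$ one trivializes against — making sure the rotation by $\theta_1$ and the mixing of $(h_0,h_1)$ by the $\theta_2$-rotation in (\ref{def mu lambda}) come out with exactly the right entries, and in particular verifying the sign claim $\lambda\mu>0$, which should follow from the fact that the chart $a$ and the frame $(e_2,e_3)$ are both positively oriented (so the Jacobian $\frac{1}{\lambda\mu}$ of the map $(x,y)\mapsto$ orthonormal coordinates, which equals $\det$ of the matrix in (\ref{e_2 e_3 function lambda mu theta_1}), has a fixed sign), together with $H(g',g')=\unitquat$ forcing $h_0^2-h_1^2$... — actually from (\ref{def mu lambda}), $\frac{1}{\lambda\mu}=(h_0^2+h_1^2)$... no: $\frac{1}{\lambda}\cdot\frac{1}{\mu}=(\cos\theta_2 h_0+\sin\theta_2 h_1)(-\sin\theta_2 h_0+\cos\theta_2 h_1)$, so the positivity has to be argued from orientation compatibility rather than read off algebraically, and that is the delicate step.
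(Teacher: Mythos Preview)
Your plan takes a substantially more circuitous route than the paper and, as written, does not close. The paper's proof bypasses the Gauss map and the second fundamental form entirely: it simply writes the Dirac equation $D\varphi=\vec H\cdot\varphi$ in the parallel frame $\tilde s$. Since $[\nabla_{e_j}\varphi]=dg(e_j)=g'\,\underline{e_j}$ (here $\underline{e_j}\in\mathcal A$ is the coordinate of $e_j$ in the chart $a$), and $[e_2]=J$, $[e_3]=K$, $[\vec H]=\sigma h_0\unitquat+h_1I$, the Dirac equation becomes, after right-multiplying by $g^{-1}$ and using $g'g^{-1}=\cos\psi\,J+\sin\psi\,K$, the $2\times2$ system
\[
\begin{pmatrix}\cos\psi&\sin\psi\\ \sin\psi&-\cos\psi\end{pmatrix}
\begin{pmatrix}\sigma h_0\\ h_1\end{pmatrix}
=\begin{pmatrix}\underline{e_2}\\ \underline{e_3}\end{pmatrix}.
\]
Expanding $\psi=\theta_1+\sigma\theta_2$ and separating real and $\sigma$-parts immediately gives $\underline{e_2}=d\sin\theta_1+\sigma c\cos\theta_1$, $\underline{e_3}=-d\cos\theta_1+\sigma c\sin\theta_1$ with $c=h_0\cos\theta_2+h_1\sin\theta_2$, $d=-h_0\sin\theta_2+h_1\cos\theta_2$, which is exactly (\ref{e_2 e_3 function lambda mu theta_1})--(\ref{def mu lambda}) with $\mu=1/c$, $\lambda=1/d$. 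The sign $\lambda\mu>0$ then follows in one line from the fact that $(e_2,e_3)$ and $(\partial_x,\partial_y)$ are both positively oriented.

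Your approach, by contrast, tries to recover $(e_2,e_3)$ by matching two expressions for $dG$: one from $g$ via (\ref{etap theta}), and one from $dG\simeq B(e_2,\cdot)\cdot e_3+e_2\cdot B(e_3,\cdot)$. The problem is that the second expression involves the \emph{full} second fundamental form $B$, not just $\vec H$. The constraints $K=K_N=0$ and $\tfrac12\trace B=\vec H$ give only four scalar equations for the six entries of $B$, so $B$ is not determined; you would be solving simultaneously for $B$ \emph{and} for the change of frame $(e_2,e_3)\to(\partial_x,\partial_y)$. This is not impossible, but it is a genuinely harder system than the one the Dirac equation hands you directly, and nothing in your outline indicates how you would close it. Your later remark about ``reading off $e_2,e_3$ from $\vec H$ via the structure equations'' is in fact pointing at the Dirac equation --- that is the missing key step, and once you use it the detour through $dG$ and $B$ becomes unnecessary.
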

\begin{proof}
In the chart $a:\mathcal{U}\subset\mathcal{A}\rightarrow M$ introduced above, $e_2,e_3$ are represented by two functions $\underline{e_2},\underline{e_3}:\mathcal{U}\subset\mathcal{A}\rightarrow\mathcal{A}.$ In $\tilde{s},$ the Dirac equation (\ref{eqn dirac}) reads
$$[e_2]\ \tch{[\nabla_{e_2}\varphi]}+[e_3]\ \tch{[\nabla_{e_3}\varphi]}\ =\ [\vec H]\ \tch{[\varphi]},$$
that is
$$J\ dg(e_2)+K\ dg(e_3)=(-\sigma h_0\unitquat+h_1I)\ g;$$
since $dg(e_2)g^{-1}=g'g^{-1}\underline{e_2}$ and $dg(e_3)g^{-1}=g'g^{-1}\underline{e_3},$ using (\ref{etap theta}) this may be written
\begin{equation}\label{eqn dirac local}
\left(\begin{array}{rr}\cos\psi & \sin\psi\\ \sin\psi & -\cos\psi\end{array}\right)\left(\begin{array}{c}\sigma h_0\\h_1\end{array}\right)=\left(\begin{array}{c}\underline{e_2}\\\underline{e_3}\end{array}\right).
\end{equation}
Setting $c:=h_0\cos\theta_2+h_1\sin\theta_2$ and $d:=-h_0\sin\theta_2+h_1\cos\theta_2,$
(\ref{eqn dirac local}) reads
$$\underline{e_2}=d\sin\theta_1+\sigma c\cos\theta_1\hspace{.5cm}\mbox{and}\hspace{.5cm}\underline{e_3}=-d\cos\theta_1+\sigma c\sin\theta_1.$$
Since $\underline{e_2}$ and $\underline{e_3}$ represent the independent vectors $e_2,e_3,$ we have $cd\neq 0;$ setting $\mu=1/c$ and $\lambda=1/d,$ we get (\ref{e_2 e_3 function lambda mu theta_1}) and (\ref{def mu lambda}). Moreover, $\lambda\mu>0$ since the bases $(e_2,e_3)$ and $(\partial_x,\partial_y)$ are both positively oriented.
\end{proof}
\begin{prop}\label{prop metric system} In the chart $a=x+\sigma y$ of Theorem \ref{thm chart A}, the metric reads
\begin{equation}\label{metric lambda mu}
\lambda^2\ dx^2\ +\ \mu^2\ dy^2;
\end{equation}
moreover, $\lambda$ and $\mu$ are solutions of the hyperbolic system
\begin{equation}\label{systeme hyperbolique}
\left\{\begin{array}{lcr}\partial_x\mu&=&\lambda\ \partial_x\theta_2\\
\partial_y\lambda&=&-\mu\ \partial_y\theta_2.
\end{array}\right.
\end{equation}
\end{prop}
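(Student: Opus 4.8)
The plan is to prove the two assertions in turn; both follow quickly from Lemma~\ref{lemma ei partiali} once one uses that $(e_2,e_3)$ is a \emph{parallel} orthonormal tangent frame. For the metric, invert the linear system (\ref{e_2 e_3 function lambda mu theta_1}): its matrix has determinant $1/(\lambda\mu)\neq 0$ (and $\lambda,\mu$ are nowhere zero), so
\begin{equation*}
\partial_x=\lambda\left(\sin\theta_1\, e_2-\cos\theta_1\, e_3\right),\qquad
\partial_y=\mu\left(\cos\theta_1\, e_2+\sin\theta_1\, e_3\right).
\end{equation*}
Since $(e_2,e_3)$ is orthonormal this yields at once $\langle\partial_x,\partial_x\rangle=\lambda^2$, $\langle\partial_y,\partial_y\rangle=\mu^2$ and $\langle\partial_x,\partial_y\rangle=0$, which is (\ref{metric lambda mu}).

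For the hyperbolic system, the key point is that, since $K=K_N=0$ and $M$ is simply connected, the connection on $\tilde Q$ is flat and $\tilde s$ is a global parallel section, hence the tangent frame $(e_2,e_3)$ it induces is parallel for the Levi-Civita connection $\nabla^M$ of $M$ (as recalled at the beginning of Section~\ref{section local description}). Introduce the orthonormal coordinate frame $f_1:=\tfrac1\lambda\partial_x$, $f_2:=\tfrac1\mu\partial_y$ and compute its connection $1$-form $\beta$ from Cartan's first structure equation for the coframe $(\lambda\,dx,\ \mu\,dy)$ of the metric (\ref{metric lambda mu}):
\begin{equation*}
\beta=-\frac{\partial_y\lambda}{\mu}\,dx+\frac{\partial_x\mu}{\lambda}\,dy,\qquad
\nabla^M f_1=\beta\, f_2,\quad \nabla^M f_2=-\beta\, f_1.
\end{equation*}
Differentiating the expressions of $e_2,e_3$ in (\ref{e_2 e_3 function lambda mu theta_1}) and using these formulas gives $\nabla^M e_2=(\beta-d\theta_1)\, e_3$; hence the parallelism of $(e_2,e_3)$ is equivalent to $\beta=d\theta_1$, that is
\begin{equation*}
\frac{\partial_y\lambda}{\mu}=-\partial_x\theta_1,\qquad \frac{\partial_x\mu}{\lambda}=\partial_y\theta_1.
\end{equation*}

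It remains to replace the derivatives of $\theta_1$ by those of $\theta_2$. Since $\psi=\theta_1+\sigma\theta_2$ is a conformal map (Theorem~\ref{thm chart A}), equation (\ref{eqn crl}) gives $\partial_y\theta_1=\partial_x\theta_2$ and $\partial_x\theta_1=\partial_y\theta_2$ (equivalently, this follows from (\ref{def psi1 psi2}) since the coordinates $s,t$ of (\ref{def s t}) are $x+y$ and $x-y$). Substituting into the two equations above turns them into $\partial_x\mu=\lambda\,\partial_x\theta_2$ and $\partial_y\lambda=-\mu\,\partial_y\theta_2$, which is exactly (\ref{systeme hyperbolique}).

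I do not expect a real obstacle here: the whole argument is a short computation. The only steps that require attention are the sign conventions in Cartan's structure equations and in the behaviour of the connection $1$-form under the rotation of frame relating $(f_1,f_2)$ to $(e_2,e_3)$, so the signs in the final system should be cross-checked against the parallelism condition.
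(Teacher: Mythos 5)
Your proof is correct and follows the same two-step strategy as the paper's: read off the metric from orthonormality of $(e_2,e_3)$, then express the parallelism of $(e_2,e_3)$ as a PDE system for $(\lambda,\mu)$ and finish with the Cauchy--Riemann relations $\partial_y\theta_1=\partial_x\theta_2$, $\partial_x\theta_1=\partial_y\theta_2$. The only difference is presentational: the paper computes the Christoffel symbols of $\lambda^2dx^2+\mu^2dy^2$ and imposes $\nabla e_i=0$ componentwise, whereas you package the same computation in the connection $1$-form via Cartan's structure equations, obtaining $\nabla^M e_2=(\beta-d\theta_1)\,e_3$; the signs you were worried about check out ($\beta=\omega^2{}_1=-\tfrac{\partial_y\lambda}{\mu}\,dx+\tfrac{\partial_x\mu}{\lambda}\,dy$ and $\beta=d\theta_1$ gives exactly the system \eqref{systeme hyperbolique2}).
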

\begin{rem}
Note that
$$\partial_x\theta_2=\partial_y\theta_1\hspace{.5cm}\mbox{ and }\hspace{.5cm}\partial_y\theta_2=\partial_x\theta_1$$
(since $\psi=\theta_1+\sigma\theta_2$ is a conformal map), and thus that (\ref{systeme hyperbolique}) is equivalent to
\begin{equation}\label{systeme hyperbolique2}
\left\{\begin{array}{lcr}\partial_x\mu&=&\lambda\ \partial_y\theta_1\\
\partial_y\lambda&=&-\mu\ \partial_x\theta_1.
\end{array}\right.
\end{equation}
Hyperbolic systems similar to (\ref{systeme hyperbolique}) and  (\ref{systeme hyperbolique2}) appear in \cite{GM}.
\end{rem}
\begin{proof}
We just write that the basis $(e_2,e_3),$ given in $(\partial_x,\partial_y)$ by (\ref{e_2 e_3 function lambda mu theta_1}), is orthonormal and parallel: since the basis $(e_2,e_3)$ is orthonormal, we get $I=P^t\ A P,$ where $A$ is the matrix of the metric in $(\partial_x,\partial_y),$ and where 
$$P=\left(\begin{array}{cc}\frac{1}{\lambda}\sin\theta_1&-\frac{1}{\lambda}\cos\theta_1\\\frac{1}{\mu}\cos\theta_1&\frac{1}{\mu}\sin\theta_1\end{array}\right)$$
is the matrix of change of bases, and thus
$$A=\left(\begin{array}{cc}\lambda^2&0\\0&\mu^2\end{array}\right),$$
which is (\ref{metric lambda mu}). We then compute the Christoffel symbols using the Christoffel formulas, and easily get
$$\Gamma_{xx}^x=\frac{1}{\lambda}\partial_x\lambda,\hspace{.5cm} \Gamma_{yx}^x=\frac{1}{\lambda}\partial_y\lambda,\hspace{.5cm} \Gamma_{xy}^y=\frac{1}{\mu}\partial_x\mu,\hspace{.5cm} \Gamma_{yy}^y=\frac{1}{\mu}\partial_y\mu$$
and
$$ \Gamma_{xx}^y=-\frac{\lambda}{\mu^2}\partial_y\lambda,\hspace{.5cm} \Gamma_{yy}^x=-\frac{\mu}{\lambda^2}\partial_x\mu.$$
Writing that $(e_2,e_3),$ given by (\ref{e_2 e_3 function lambda mu theta_1}), is parallel with respect to the metric (\ref{metric lambda mu}), we get (\ref{systeme hyperbolique2}), and thus also (\ref{systeme hyperbolique}).
\end{proof}
\begin{rem}\label{rmk ff} (Relation to the second fundamental form). If we consider the orthonormal basis normal to the surface
\begin{equation}\label{u_0 u_1 function e_0 e_1}
u_0=\cos\theta_2\ e_0+\sin\theta_2\ e_1\hspace{.5cm}\mbox{ and }\hspace{.5cm}u_1=-\sin\theta_2\ e_0+\cos\theta_2\ e_1,
\end{equation}
and the orthonormal basis tangent to the surface
$$u_2=\frac{1}{\lambda}\partial_x\hspace{.5cm} \mbox{ and }\hspace{.5cm}u_3= \frac{1}{\mu}\partial_y,$$  
the second fundamental form $B$ is given by
\begin{equation}\label{scd ff}
B(u_2,u_2)=\frac{2}{\lambda}u_1,\hspace{.5cm} B(u_3,u_3)=\frac{2}{\mu}u_0,\hspace{.5cm} B(u_2,u_3)=0.
\end{equation}
This may be proved by direct computations using the relation 
$$B(X,Y)=X\cdot \eta(Y)-\eta(Y)\cdot X$$ 
(see \cite{BLR}) together with (\ref{etap theta}), (\ref{AJ+AK}) and (\ref{e_2 e_3 function lambda mu theta_1}). 
\end{rem}
\begin{rem}\label{asymptotic net}
The special chart in Theorem \ref{thm chart A} generalizes the asymptotic Tchebychef net of a flat surface in $S^3.$ Indeed, we may easily obtain that the metric of the surface is given by
$$\frac{\lambda^2+\mu^2}{4}\ \left(ds^2+dt^2\right)\ +\ \frac{\lambda^2-\mu^2}{2}\ dsdt$$ 
and the second fundamental form by
$$B=\frac{1}{2}\left(\lambda u_1+\mu u_0\right)\left(ds^2+dt^2\right)+\left(\lambda u_1-\mu u_0\right)dsdt$$ 
in the coordinates $(s,t)$ defined in (\ref{def s t}). For a flat surface in $S^3$ these formulas reduce in fact to
$$ds^2+dt^2-2\cos 2\theta_2\ dsdt\hspace{.5cm}\mbox{and}\hspace{.5cm}<B,e_1>=-2\sin 2\theta_2\ ds dt$$
respectively, where, if $e_0$ stands for the outer unit normal vector of $S^3,$ the vector $e_1$ is such that $(e_0,e_1)$ is a positively oriented and orthonormal basis normal to $M;$ this is because $\lambda=-2\sin\theta_2$ and $\mu=2\cos\theta_2$ in that case (see Remark \ref{metric torus S3} below). These last formulas are characteristic of an asymptotic Tchebychef net \cite{Mo,K}.
\end{rem}
We finally obtain the local structure of surfaces with $K=K_N=0$ and regular Gauss map (see \cite{CD,DT} for the first description, and \cite{GM} for another representation):
\begin{thm}\label{thm local description}  Let $\psi:\mathcal{U}\subset\mathcal{A}\rightarrow\mathcal{A}$ be a conformal map, and $\theta_1,\theta_2:\mathcal{U}\rightarrow\R$ be such that 
$$\psi=\theta_1+\sigma\theta_2;$$ 
suppose that $\lambda,\mu$ are solutions of (\ref{systeme hyperbolique}) such that $\lambda\mu>0$, and define 
\begin{equation}\label{def alpha 2 3}
\underline{e_2}=\sin\theta_1\frac{1}{\lambda}+\sigma\cos\theta_1\frac{1}{\mu}\hspace{.5cm}\mbox{and}\hspace{.5cm}\underline{e_3}=-\cos\theta_1\frac{1}{\lambda}+\sigma\sin\theta_1\frac{1}{\mu}. 
\end{equation}
Then, if $g:\mathcal{U}\rightarrow Spin(4)\subset\HA$ is a conformal map solving 
$$g'g^{-1}=\cos\psi\ J+\sin\psi\ K,$$ 
and if we set
\begin{equation}\label{formula g omegai}
\xi:=g^{-1}(\omega_2J+\omega_3K)\hat{g}
\end{equation}
where $\omega_2,\omega_3:T\mathcal{U}\rightarrow\R$ are the dual forms of $\underline{e_2},\underline{e_3}\in\Gamma(T\mathcal{U}),$ the function $F=\int\xi$ defines an immersion $\mathcal{U}\rightarrow\R^{4}$ with $K=K_N=0.$ Reciprocally, the immersions of $M$ such that $K=K_N=0$ and with regular Gauss map are locally of this form.
\end{thm}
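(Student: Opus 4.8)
The plan is to derive both implications from Theorem \ref{th main result}, applied to the spinor field $\varphi=[\tilde s,g]$ written in the parallel frame $\tilde s$: the data listed in the statement is exactly what is needed to reconstruct such a $\varphi$, and the computations of Lemmas \ref{etap1} and \ref{lemma ei partiali}, of Proposition \ref{prop metric system}, and of Remark \ref{rmk ff} are the required ingredients, used in the reverse direction for the direct construction.

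\emph{Direct construction.} After shrinking $\mathcal{U}$ if necessary (so that it is simply connected; such a conformal $g$ exists there, since $g'g^{-1}=\cos\psi\,J+\sin\psi\,K$ together with conformality reduces to the linear ordinary differential equations $g_i'=(\cos\psi_i\,J+\sin\psi_i\,K)g_i$ in the single variables $s,t$), equip $\mathcal{U}$ with the metric $\lambda^2\,dx^2+\mu^2\,dy^2$ and with the trivial oriented rank-two bundle $E$ carrying the flat connection for which $(e_0,e_1)$ is a parallel orthonormal frame; its normal curvature is zero. Since $\lambda,\mu$ solve (\ref{systeme hyperbolique}) with $\lambda\mu>0$, the frame $(e_2,e_3)$ given by (\ref{e_2 e_3 function lambda mu theta_1}) — i.e.\ with coordinates $\underline{e_2},\underline{e_3}$ of (\ref{def alpha 2 3}), dual to $\omega_2,\omega_3$ — is orthonormal and parallel for this metric, exactly by the computation in the proof of Proposition \ref{prop metric system}; hence the metric is flat, and $\Sigma=\Sigma E\otimes\Sigma M$ admits a global parallel section $\tilde s$ adapted to $(e_0,e_1,e_2,e_3)$. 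Set $\varphi:=[\tilde s,g]$; then $H(\varphi,\varphi)=H(g,g)=1$, and, $\tilde s$ being parallel, $\nabla\varphi=[\tilde s,dg]$ with $dg\,g^{-1}=(\cos\psi\,J+\sin\psi\,K)\,da$ as in Lemma \ref{etap1}. A direct Clifford-algebra computation in $\tilde s$ — the computation of the proof of Lemma \ref{lemma ei partiali} read backwards, using $[e_2]=J$, $[e_3]=K$, $J^2=K^2=-\unitquat$ and $JK=I$ — then shows that $\varphi$ solves $D\varphi=\vec H\cdot\varphi$, where $\vec H=h_0e_0+h_1e_1$ is the section of $E$ read off from (\ref{def mu lambda}). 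By Theorem \ref{th main result}, the closed $\R^4$-valued $1$-form $\xi(X)=\langle\langle X\cdot\varphi,\varphi\rangle\rangle$ integrates to an isometric immersion $F=\int\xi$ of $\bigl(\mathcal{U},\lambda^2\,dx^2+\mu^2\,dy^2\bigr)$ with normal bundle identified, with its connection, to $E$ (Remark \ref{rmk ident normal}); the first fundamental form being flat gives $K=0$, and $E$ being flat gives $K_N=0$. Finally, writing $X=\omega_2(X)e_2+\omega_3(X)e_3$ gives $[X]=\omega_2(X)J+\omega_3(X)K$, so
$$\xi(X)=\overline g\,[X]\,\hat g=g^{-1}\bigl(\omega_2(X)J+\omega_3(X)K\bigr)\hat g,$$
which is (\ref{formula g omegai}).

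\emph{Converse.} Let $F:M\hookrightarrow\R^4$ be an immersion with $K=K_N=0$ and regular Gauss map, and work on a simply connected neighborhood. By Remark \ref{rmk general idea} the restriction $\varphi:=\unitquat_{|M}$ of the constant spinor satisfies $D\varphi=\vec H\cdot\varphi$, $H(\varphi,\varphi)=1$, and $F=\int\langle\langle X\cdot\varphi,\varphi\rangle\rangle$ up to a rigid motion. Since $TM$ and $E$ are flat, $\tilde Q$ has a parallel section $\tilde s$, adapted to parallel orthonormal frames $(e_0,e_1)$, $(e_2,e_3)$, and $g:=[\varphi]:M\to Spin(4)$ is the map to which Theorem \ref{thm chart A} applies: it is conformal, there is the arc-length chart $a:\mathcal{U}\subset\mathcal{A}\to M$ compatible with the orientation, and $g'g^{-1}=\cos\psi\,J+\sin\psi\,K$ for a conformal $\psi=\theta_1+\sigma\theta_2$. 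By Lemma \ref{lemma ei partiali} there are $\lambda,\mu$ with $\lambda\mu>0$ satisfying (\ref{def mu lambda}) for which $(e_2,e_3)$ is given in this chart by (\ref{e_2 e_3 function lambda mu theta_1}), i.e.\ with coordinates $\underline{e_2},\underline{e_3}$ as in (\ref{def alpha 2 3}); by Proposition \ref{prop metric system}, $\lambda,\mu$ solve (\ref{systeme hyperbolique}). With $\omega_2,\omega_3$ dual to $\underline{e_2},\underline{e_3}$, the same computation $\xi(X)=\overline g\,[X]\,\hat g=g^{-1}(\omega_2(X)J+\omega_3(X)K)\hat g$ as above shows that $F=\int\xi$ has the form (\ref{formula g omegai}).

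\emph{Main difficulty.} The only step that is not a direct assembly of earlier results is the verification, in the direct construction, that the reconstructed $\varphi=[\tilde s,g]$ does solve $D\varphi=\vec H\cdot\varphi$: one must check that the hyperbolic system (\ref{systeme hyperbolique}) — which makes $(e_2,e_3)$ parallel, hence the metric flat — together with the prescription (\ref{def mu lambda}) for $\vec H$ are precisely the integrability conditions the spinor formalism requires. This is a careful bookkeeping computation with the conjugations $\overline{\;\cdot\;}$, $\widehat{\;\cdot\;}$ and the Clifford relations among $I,J,K$, but it is routine once the earlier lemmas are available.
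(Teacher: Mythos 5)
Your proposal is correct and follows essentially the same route as the paper's proof: build the metric $\lambda^2\,dx^2+\mu^2\,dy^2$ and the trivial flat bundle $E$, use Proposition \ref{prop metric system} (read forward) to get a parallel frame $(e_2,e_3)$ hence flatness, set $\varphi=[\tilde s,g]$, verify the Dirac equation by inverting the computation of Lemma \ref{lemma ei partiali}, invoke Theorem \ref{th main result}, and conclude with the identity $\xi(X)=\overline g\,[X]\,\hat g$; the converse is handled in both cases by Remark \ref{rmk general idea}, Lemma \ref{rmk lift}, Theorem \ref{thm chart A}, Lemma \ref{lemma ei partiali} and Proposition \ref{prop metric system}. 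The only differences are expository — you make explicit the reduction of $g'g^{-1}=\cos\psi\,J+\sin\psi\,K$ to linear ODEs in $s,t$ (not strictly needed since $g$ is given in the hypothesis) and you spell out the simply-connectedness required by Theorem \ref{th main result}, which the paper leaves implicit.
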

\begin{proof}
We first prove the direct statement. We consider the metric $\lambda^2dx^2+\mu^2dy^2$ on $\mathcal{U};$ it is such that $e_2\simeq \underline{e_2},$ $e_3\simeq \underline{e_3}$ $\in\Gamma(T\mathcal{U})$ defined by (\ref{def alpha 2 3}) form an orthonormal frame of $T\mathcal{U}.$ Since $(\lambda,\mu)$ is a solution of (\ref{systeme hyperbolique}), the frame $(e_2,e_3)$ is parallel (see Proposition \ref{prop metric system} and its proof), and the metric is flat. We also consider the trivial bundle $E:=\R^2\times \mathcal{U},$ with its trivial metric and its trivial connection; the canonical basis $(e_0,e_1)$ of $\R^2$ thus defines orthonormal and parallel sections $\in\Gamma(E)$. We set $s:=(e_0,e_1,e_2,e_3)\in Q=(SO(2)\times SO(2))\times \mathcal{U},$ and $\tilde{s}\in\tilde{Q}=(Spin(2)\times Spin(2))\times \mathcal{U}$ such that $\pi(\tilde{s})=s,$ where $\pi:\tilde{Q}\rightarrow Q$ is the natural covering (Section \ref{section splitting}). We then consider $\varphi\in\Sigma=\tilde{Q}\times\HA/\rho$ such that $[\varphi]=g$ in $\tilde{s}.$ The form $\xi=\langle\langle X\cdot\varphi,\varphi\rangle\rangle$ is a closed 1-form (since $\varphi$ is a solution of the Dirac equation $D\varphi=\vec{H}\cdot\varphi,$ where $\vec{H}=h_0e_0+h_1e_1$ is defined by (\ref{def mu lambda}); see the proof of Lemma \ref{lemma ei partiali} and Proposition \ref{prop fundamental xi}), and $F=\int\xi$ is an isometric immersion of $M$ in $\R^4$ whose normal bundle identifies to $E.$ Thus it is a flat immersion in $\R^4,$ with flat normal bundle. 

Reciprocally, if $F:M\rightarrow\R^4$ is the immersion of a flat surface with flat normal bundle and regular Gauss map, we have
$$F=\int\xi,\hspace{1cm}\mbox{with}\hspace{.5cm}\xi(X)=\langle\langle X\cdot\varphi,\varphi\rangle\rangle,$$
where $\varphi$ is the restriction to $M$ of the constant spinor field $\unitquat$ of $\R^4$ (Remark \ref{rmk general idea}). In a parallel frame $\tilde{s},$ we have $\varphi=[\tilde{s},g],$ where $g:M\rightarrow Spin(4)\subset\HA$ is an horizontal and conformal map (Lemma \ref{rmk lift}). In a chart compatible with the Lorentz structure induced by the Gauss map and adapted to $g$ (Theorem \ref{thm chart A}), $\xi$ is of the form (\ref{formula g omegai}) where $(\omega_1,\omega_2)$ is the dual basis of the basis defined by (\ref{def alpha 2 3}) and where in this last expression $\lambda,\mu$ are solutions of (\ref{systeme hyperbolique}) (Proposition \ref{prop metric system}).
\end{proof}
As a corollary, we obtain a new proof of the following result \cite{DT}:
\begin{cor}\label{cor local description}
Locally, a flat surface with flat normal bundle and regular Gauss map depends on 4 functions of one variable.
\end{cor}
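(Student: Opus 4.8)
The plan is to deduce the corollary by counting the free functional parameters in the parametrization furnished by Theorem~\ref{thm local description}, in which every flat surface with flat normal bundle and regular Gauss map is produced, locally and up to rigid motions, from a conformal map $\psi:\mathcal{U}\subset\mathcal{A}\to\mathcal{A}$ together with a solution $(\lambda,\mu)$ of the hyperbolic system (\ref{systeme hyperbolique}) satisfying $\lambda\mu>0$. So I would count separately the functional freedom in $\psi$ and in $(\lambda,\mu)$, check that nothing further (beyond finitely many constants) enters, and finally argue that the resulting four functions are genuinely independent.

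First, the freedom in $\psi$. By the discussion in Section~\ref{section lorentz surfaces}, a conformal map $\psi:\mathcal{U}\subset\mathcal{A}\to\mathcal{A}$ is nothing but a pair of functions of one variable: writing $\psi=\frac{1+\sigma}{2}\psi_1+\frac{1-\sigma}{2}\psi_2$ in the coordinates $(s,t)$ of (\ref{def s t}), conformality forces $\psi_1=\psi_1(s)$ and $\psi_2=\psi_2(t)$, so $\psi$ contributes exactly $2$ functions of one variable. Next, with $\psi$ — hence $\theta_2$ and the coefficients of (\ref{systeme hyperbolique}) — fixed, I would observe that (\ref{systeme hyperbolique}) is a linear first-order hyperbolic system whose characteristics are the coordinate lines $x=\mathrm{const}$ and $y=\mathrm{const}$: its first equation is an ordinary differential equation propagating $\mu$ in the $x$-direction once $\lambda$ is known, and its second propagates $\lambda$ in the $y$-direction once $\mu$ is known. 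The associated characteristic (Goursat) initial value problem — prescribing $\mu$ along a line $x=\mathrm{const}$ and $\lambda$ along a transversal line $y=\mathrm{const}$ — is then well posed by a standard Picard iteration, so $(\lambda,\mu)$ depends on exactly $2$ further functions of one variable, the open condition $\lambda\mu>0$ not affecting the count. Everything else in the construction of Theorem~\ref{thm local description} — the frame $\underline{e_2},\underline{e_3}$ of (\ref{def alpha 2 3}), the lift $g$ solving $g'g^{-1}=\cos\psi\,J+\sin\psi\,K$ (an ODE in $s$ and in $t$ separately, integrated from a choice of $g_1(s_0),g_2(t_0)\in S^3$), and $F=\int\xi$ — is then determined by $(\psi,\lambda,\mu)$ up to finitely many integration constants and the action of $\mathbb{G}$ on the chart, none of which count as functions. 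Combined with the converse half of Theorem~\ref{thm local description}, this shows that locally such a surface is built from, and hence depends on, $2+2=4$ functions of one variable.

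The one point I expect to require genuine care — as opposed to bookkeeping — is showing that these four functions are independent data, i.e. that the assignment $(\psi,\lambda,\mu)\mapsto(\text{surface up to congruence})$ does not collapse the four-function family onto a smaller one. For this I would invoke the geometric meaning of the ingredients: $\psi'$ recovers the geodesic curvatures of the two curves $g_1,g_2$ (Remark~\ref{rmk interpretation psi 1 2}), while $(\lambda,\mu)$ recovers the first fundamental form (\ref{metric lambda mu}) and, via (\ref{scd ff}), the second fundamental form; since congruent immersions have the same first and second fundamental forms in the chart of Theorem~\ref{thm chart A}, distinct admissible data $(\psi,\lambda,\mu)$ yield non-congruent surfaces, so the dependence on $4$ functions of one variable is sharp and cannot be reduced.
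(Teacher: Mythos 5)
Your proof is correct and reaches the same count by essentially the same two-step bookkeeping (two functions from $\psi$, two more from the hyperbolic system), but you set up the second step differently. The paper changes coordinates to $(s,t)$, puts (\ref{systeme hyperbolique}) into the explicit Cauchy form (\ref{systeme hyperbolique s t}), and prescribes both $\lambda(0,t)$ and $\mu(0,t)$ along the non-characteristic line $s=0$. You instead keep the $(x,y)$ coordinates, in which (\ref{systeme hyperbolique}) is already in characteristic (diagonal) form — $\mu$ is transported along $y=\mathrm{const}$ and $\lambda$ along $x=\mathrm{const}$ — and pose the corresponding Goursat problem, prescribing $\mu$ along a line $x=\mathrm{const}$ and $\lambda$ along a line $y=\mathrm{const}$. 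Both initial value problems are classically well posed for a first-order linear $2\times 2$ hyperbolic system and both yield two free functions of one variable, so the count is the same; your version has the small advantage of not needing the coordinate change, while the paper's makes the hyperbolicity and the data line visible in one display. Your closing paragraph on independence — reading off the geodesic curvatures of $g_1,g_2$ from $\psi'$ via Remark~\ref{rmk interpretation psi 1 2} and the first and second fundamental forms from $(\lambda,\mu)$ via (\ref{metric lambda mu}) and (\ref{scd ff}), modulo the finite-dimensional $\mathbb{G}$-action — is a genuine supplement: the paper asserts the dependence but does not address injectivity of the parametrization, and your argument fills that in correctly.
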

\begin{proof}
We first observe that the function $\psi$ depends on two functions of one variable: since $\psi:\mathcal{A}\rightarrow \mathcal{A}$ is a conformal map, writing
$$\psi=\frac{1+\sigma}{2}\ \psi_1+\frac{1-\sigma}{2}\ \psi_2,$$
we have
$$\psi_1=\psi_1(s)\hspace{1cm}\mbox{ and }\hspace{1cm}\psi_2=\psi_2(t)$$
where the coordinates $(s,t)$ are defined by (\ref{def s t}). Now, writing the system (\ref{systeme hyperbolique}) in the coordinates $(s,t)$ we get
\begin{equation}\label{systeme hyperbolique s t}
\partial_s\left(\begin{array}{c}\lambda\\\mu\end{array}\right)=\left(\begin{array}{cc}+1&0\\0&-1\end{array}\right)\partial_t\left(\begin{array}{cc}\lambda\\\mu\end{array}\right)-\frac{1}{2}\left(\begin{array}{cc}0&\psi'_1+\psi'_2\\\psi'_2-\psi'_1&0\end{array}\right)\left(\begin{array}{c}\lambda\\\mu\end{array}\right);
\end{equation}
we may solve a Cauchy problem for this system: once $\psi_1$ and $\psi_2$ are given, a solution of (\ref{systeme hyperbolique s t}) depends on two functions $\mu(0,t),$ $\lambda(0,t)$ of the variable $t.$ By Theorem \ref{thm local description}, the surface depends on $\psi_1(s),\psi_2(t),\mu(0,t)$ and $\lambda(0,t).$
\end{proof}
\begin{rem} We may try to solve the hyperbolic system (\ref{systeme hyperbolique}), prescribing additionally the conformal class of the metric: setting
$$\lambda^2dx^2+\mu^2dy^2=e^{2f}\left(\lambda_o^2dx^2+\mu_o^2dy^2\right)$$
where $\lambda_o$ and $\mu_o$ are given functions, (\ref{systeme hyperbolique}), in the form (\ref{systeme hyperbolique s t}), easily gives 
\begin{equation}\label{systeme hyperbolique f s t}
\partial_s f=A\hspace{.5cm}\mbox{and}\hspace{.5cm}\partial_t f=B
\end{equation}
where $A$ and $B$ depend on the functions $\lambda_o,\mu_o,\psi_1,\psi_2$ and their first derivatives. The existence of a solution thus relies on the compatibility condition $\partial_tA=\partial_sB,$ which depends on the Gauss map $G$ up to its third derivatives (since the second derivatives of $\psi$ depend on $g$ up to its third derivatives, and $g$ is a horizontal lift of $G$). If this condition holds, the solution $f$ is unique, up to a constant: in the given conformal class, the metric is thus unique up to homothety. This is a special case of the general problem concerning the prescription of the Gauss map, whose solvability relies on a condition of order 3 on the prescribed Gauss map; see \cite{HO,W2}. 
\end{rem}

\section{The structure of the flat tori in $S^3$} \label{section flat tori S3}
In this section we obtain a spinorial proof of the description of the flat tori in $S^3$ \cite{B,Sa,S,K}: a flat torus in $S^3$ is necessarily a product of two horizontal curves in $S^3\subset\HH.$ We determine in the first subsection the global Lorentz structure induced on a flat torus in $S^3,$ we then study the global structure of the map which represents a constant spinor field in a parallel frame adapted to the torus, we then give a representation formula for the flat tori in $S^3$ and finally prove that they are always products of two horizontal closed curves in $S^3;$ in the last two subsections, we link this description to the classical Kitagawa representation of flat tori in $S^3$, and also deduce the structure of the Gauss map image. 

\subsection{The Lorentz structure induced by the Gauss map}\label{section flat tori R4}
We suppose here that $M$ is an oriented flat torus immersed in $S^3;$ its normal bundle $E\rightarrow M$ is trivial, and has a natural orientation. We note that the bundles of frames $Q_E$ and $Q_M$ are trivial, since there exist globally defined positively oriented and orthonormal frames $(e_0,e_1)$ and $(e_2,e_3),$ which are respectively normal and tangent to $M;$ in the following, we moreover assume that these frames are parallel ($K=K_N=0$). We then consider two spin structures
$$\tilde{Q}_E\rightarrow Q_E\hspace{1cm}\mbox{and}\hspace{1cm}\tilde{Q}_M\rightarrow Q_M$$ 
such that, setting $\tilde{Q}=\tilde{Q}_E\times_M\tilde{Q}_M,$ the spinor bundle $\Sigma\R^4_{| M}$ identifies to the bundle $\Sigma=\Sigma E\otimes\Sigma M =\tilde{Q}\times\HA/\rho$ where $\rho$ is defined by (\ref{def rho}). We note that the Gauss map of $M$ is regular (since $M$ is in $S^3$), and we suppose that $M$ is endowed with the Lorentz structure induced by its Gauss map (see Section \ref{section lorentz surfaces}) and compatible with its orientation. 
\begin{prop}\label{prop unif} 
Let us denote by $\pi:\tilde{M}\rightarrow M$ the universal covering of $M,$ and consider the Lorentz structure on $\tilde{M}$ such that $\pi$ is a conformal map. Then $\tilde{M}$ is conformal to $\mathcal{A},$ and $M$ is conformal to $\mathcal{A}/\Gamma,$ where  $\Gamma$ is a subgroup of translations of $\mathcal{A}\simeq \R^{2}$ generated by two elements 
\begin{equation}\label{structure gamma 1}
\Gamma=<\tau_1,\tau_2>,
\end{equation}
where the translations $\tau_1=a\mapsto a+(s_1,t_1)$ and $\tau_2=a\mapsto a+(s_2,t_2)$ are such that \begin{equation}\label{structure gamma 2}
s_1\Z\oplus s_2\Z=S\Z \hspace{1cm}\mbox{and}\hspace{1cm} t_1\Z\oplus t_2\Z=T\Z
\end{equation}
for some positive numbers $S,T$ (the coordinates $(s,t)$ are defined in (\ref{def s t})). Moreover, the lines $s$ and $t$ are closed in the quotient $\mathcal{A}/\Gamma.$
\end{prop}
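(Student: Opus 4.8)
The plan is to construct a global conformal chart $D:\tilde M\to\mathcal{A}$, to prove that $D$ is a diffeomorphism, and then to read off the structure of $M\simeq\tilde M/\Gamma$, $\Gamma=\pi_1(M)$, from the fact that the deck transformations act by elements of $\mathbb{G}$. First, the local arc-length charts of $g$ supplied by Theorem \ref{thm chart A}, each unique up to $\mathbb{G}=\{a\mapsto\pm a+b\}$ and defined on all of $\tilde M$ since $\tilde M$ is simply connected, patch into a single conformal local diffeomorphism $D:\tilde M\to\mathcal{A}$, still satisfying $H(g',g')\equiv\unitquat$, with holonomy $\mathrm{hol}:\pi_1(M)\to\mathbb{G}$. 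In the coordinates $(s,t)$ of (\ref{def s t}) the pull-back to $\tilde M$ of the metric of $M$ is $ds^2+dt^2-2\cos2\theta_2\,ds\,dt$ by Remark \ref{asymptotic net}. Positive-definiteness forces $\sin^2 2\theta_2>0$ at every point; moreover $\sin^2 2\theta_2$ descends to a continuous function on the compact surface $M$ (the orthonormal frames $(e_0,e_1)$ and $(e_2,e_3)$ being globally parallel, the $SO(2)\times SO(2)$-holonomy is trivial, so only the $\pm\unitquat$-ambiguity of the parallel spinor frame can intervene), hence is bounded below by a positive constant. Consequently $ds^2+dt^2-2\cos2\theta_2\,ds\,dt$ is uniformly comparable to the Euclidean metric $dx^2+dy^2=\tfrac12(ds^2+dt^2)$ of $\mathcal{A}\simeq\R^2$.

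Next I would prove that $D$ is a diffeomorphism. The metric $D^{*}(dx^2+dy^2)$ on $\tilde M$ is uniformly bi-Lipschitz to the pull-back of the flat metric of $M$, which is complete because $M$ is compact; hence $D^{*}(dx^2+dy^2)$ is complete, and $D$ is a local isometry for it. A local isometry from a complete Riemannian manifold onto a connected one is a covering map, and $\mathcal{A}\simeq\R^2$ is simply connected, so $D$ is a diffeomorphism. Thus $\tilde M$ is conformal to $\mathcal{A}$, and $M\simeq\mathcal{A}/\Gamma$ with $\Gamma=\mathrm{hol}(\pi_1(M))\subset\mathbb{G}$ acting freely. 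Since $\Gamma$ acts freely, it can contain no half-turn $a\mapsto-a+b$ (such a map has a fixed point), so every element of $\Gamma$ is a translation; proper discontinuity and cocompactness then show that $\Gamma$ is a lattice of rank two, $\Gamma=\langle\tau_1,\tau_2\rangle$ with $\tau_i:a\mapsto a+(s_i,t_i)$ and $(s_1,t_1),(s_2,t_2)$ linearly independent (coordinates $(s,t)$).

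It remains to establish the product structure (\ref{structure gamma 2}) and the closedness of the $s$- and $t$-lines, and here I would use the Hopf projections of $g$. Writing $g=\tfrac{1+\sigma}{2}g_1(s)+\tfrac{1-\sigma}{2}g_2(t)$ as in Theorem \ref{thm chart A}, the Gauss map $G=g^{-1}Ig$ splits as $G=\tfrac{1+\sigma}{2}G_1(s)+\tfrac{1-\sigma}{2}G_2(t)$ with $G_1=g_1^{-1}Ig_1$ and $G_2=g_2^{-1}Ig_2$; since $G$ descends to $M$, $G_1:\R\to S^2$ is invariant under $s\mapsto s+s_1$ and $s\mapsto s+s_2$, and $G_1$ is non-constant because $G$ is an immersion (regular, as $M\subset S^3$) and $\partial_sG=\tfrac{1+\sigma}{2}\,\partial_sG_1$. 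A non-constant continuous map $\R\to S^2$ cannot admit a dense group of periods, so the period group of $G_1$ is discrete; therefore $s_1\Z+s_2\Z$ is discrete, i.e.\ equals $S\Z$ for some $S>0$, and symmetrically $t_1\Z+t_2\Z=T\Z$. Finally, the $\Gamma$-saturation of a leaf $\{t=c\}$ of the $s$-foliation is $\bigcup_{k\in\Z}\{t=c+kT\}$, a closed subset of $\R^2$, so the $s$-lines are closed in $\mathcal{A}/\Gamma$; symmetrically the $t$-lines are closed.

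The main obstacle is the second step --- showing that the developing map $D$ is a global diffeomorphism --- and this is precisely where the compactness of $M$ (used to keep $\sin^2 2\theta_2$ bounded away from $0$, hence to control the metric) and the completeness of the flat metric are indispensable. Once $\tilde M\simeq\mathcal{A}$ is known, the remaining assertions reduce to a routine analysis of the $\Z^2$-action together with the single geometric input that the two Hopf curves $g_1$ and $g_2$ are non-constant.
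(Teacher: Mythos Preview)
Your proof is correct and takes a genuinely different route from the paper on the key step. The paper treats the arc-length charts as defining a $(\mathbb{G},X)$-structure on $M$ with $X=\mathcal{A}$ and $\mathbb{G}=\{a\mapsto\pm a+b\}$, and then invokes Thurston's general theory: compactness of $M$ forces completeness of the $(\mathbb{G},X)$-structure, which in turn gives that the developing map is a diffeomorphism. You replace this black-box appeal by a direct metric comparison: in the special chart the induced metric is $4\sin^2\theta_2\,dx^2+4\cos^2\theta_2\,dy^2$ (equivalently $ds^2+dt^2-2\cos 2\theta_2\,ds\,dt$), and since both $D^*(dx^2+dy^2)$ and the pull-back of the torus metric are $\Gamma$-invariant, their ratio descends to the compact $M$ and is therefore uniformly bounded; completeness of the torus metric then gives completeness of $D^*(dx^2+dy^2)$, so $D$ is a Riemannian covering of $\R^2$, hence a diffeomorphism. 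Your argument is more elementary and self-contained, at the cost of using the specific form of the metric for surfaces in $S^3$; the paper's argument is shorter but relies on external machinery. Your justification that $\sin^2 2\theta_2$ descends to $M$ is a bit telegraphic --- the cleanest way to see it is that both metrics are $\Gamma$-invariant, so their eigenvalue ratio is too --- but the conclusion stands.

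The remaining steps also differ slightly in execution but not in substance. To rule out half-turns in $\Gamma$ you use that they have fixed points while deck transformations act freely; the paper instead uses that $\Gamma$ is commutative (a half-turn in $\mathbb{G}$ commutes with no nontrivial translation). For (\ref{structure gamma 2}) and the closedness of the $s$- and $t$-lines, you argue via the period group of $G_1,G_2$ and the $\Gamma$-saturation of a leaf, while the paper exhibits an explicit lattice vector on the $s$-axis; both arguments are equivalent, and the paper's Remark following the proposition essentially spells out your saturation argument in reverse.
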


\begin{proof}
We first consider a local section $\tilde{s}\in\Gamma(\tilde{Q})$ which is a local lift of the parallel frame $s=(e_0,e_1,e_2,e_3)\in\Gamma(Q),$ and the locally defined map $g:M\rightarrow S^3_{\mathcal{A}}$ such that
$$\varphi=[\tilde{s},g]\hspace{.5cm}\in\hspace{.3cm} \Sigma=\tilde{Q}\times\HA/\rho$$
where $\varphi=\unitquat_{|M}\in\Gamma(\Sigma).$ By Theorem \ref{thm chart A}, there exists a chart $a:\mathcal{U}\subset\mathcal{A}\rightarrow M$ (the arc length of $g$) compatible with the orientation and such that
$$g'g^{-1}=\cos\psi\ J+\sin\psi\ K$$
for some conformal map $\psi:\mathcal{U}\subset\mathcal{A}\rightarrow\mathcal{A};$ this chart is moreover unique up to the action of $\mathbb{G}=\{a\mapsto \pm a+b,\ b\in\mathcal{A}\}.$ We note that in fact this chart does not depend on the choice of the local section $\tilde{s},$ since, if we choose the other local lift of $s,$  the spinor field $\varphi$ is represented by $-g$ instead of $g.$ Using these very special charts, we may consider the Lorentz structure on $M$ as a $(\mathbb{G},X)$-structure, with $X=\mathcal{A}$ and $\mathbb{G}=\{a\mapsto \pm a+b,\ b\in\mathcal{A}\}.$ We consider its developping map
$$D:\tilde{M}\rightarrow X=\mathcal{A},$$
its holonomy representation $h:\gamma\in\pi_1(M)\rightarrow g_{\gamma}\in \mathbb{G}$ and its holonomy group $\Gamma=h(\pi_1(M));$ from the general theory of the $(\mathbb{G},X)$-structures (see \cite{T}), there is a map $\overline{D}:M\rightarrow X/\Gamma=\mathcal{A}/\Gamma
$ such that the diagram
$$\xymatrix{
  \tilde{M}\ar[r]^D\ar[d]_\pi  & \mathcal{A} \ar[d]\\
   M\ar[r]^{\overline{D}} & \mathcal{A}/\Gamma
  }$$
commutes. Moreover, if $\tilde{M}$ is endowed with the Lorentz structure induced by $\pi,$ the developing map $D$ is by construction a conformal map. We note that the $(\mathbb{G},X)$-structure is complete since $M$ is compact (Prop. 3.4.10 in \cite{T}). This implies that $D$ identifies $\tilde{M}$ to $\mathcal{A}$ and that $\overline{D}$ identifies $M$ to $\mathcal{A}/\Gamma$ in the diagram above (p. 142 and Prop. 3.4.5 in \cite{T}). We will implicitly do these identifications below.

The group $\Gamma$ is commutative and generated by two elements, since so is $\pi_1(M);$ moreover, rank $\Gamma=2$ since the quotient $\mathcal{A}/\Gamma$ is compact. We then easily deduce that $\Gamma$ is generated by $\tau_1=a\mapsto a+b_1,$ $\tau_2=a\mapsto a+b_2,$ where $b_1=(s_1,t_1)$ and $b_2=(s_2,t_2)$ (in the coordinates $(s,t)$) are independent vectors of $\mathcal{A}\simeq \R^{2}.$ 

We now consider the Gauss map $G:\mathcal{A}/\Gamma\rightarrow S^2_{\mathcal{A}}$ and its lift to the universal covering $\tilde{G}:\mathcal{A}\rightarrow S^2_{\mathcal{A}}.$ Since $\tilde{G}$ is a conformal map, it is of the form
\begin{equation}\label{dec g tilde}
\tilde{G}=\frac{1+\sigma}{2}\ \tilde{G}_1+\frac{1-\sigma}{2}\ \tilde{G}_2
\end{equation}
with $\tilde{G}_1=\tilde{G}_1(s)$ and $\tilde{G}_2=\tilde{G}_2(t)\in S^2.$ Moreover, $\tilde{G}$ is $\Gamma$-invariant and thus satisfies: $\forall\ a\in\mathcal{A},$ $\forall\ p,q\in\Z,$
$$\tilde{G}(a+p(s_1,t_1)+q(s_2,t_2))=\tilde{G}(a),$$
that is
\begin{equation}\label{periodicity G1 G2}
\tilde{G}_1(s+ps_1+qs_2)=\tilde{G}_1(s)\hspace{.5cm}\mbox{and}\hspace{.5cm}\tilde{G}_2(t+pt_1+qt_2)=\tilde{G}_2(t);
\end{equation}
thus the subgroups $s_1\Z\oplus s_2\Z$ and $t_1\Z\oplus t_2\Z$ are not dense in $\R$ (since the maps $\tilde{G}_1$ and  $\tilde{G}_2$ are not constant ($G$ is an immersion, and so is $\tilde{G}$)) and thus are of the form (\ref{structure gamma 2}) for some positive numbers $S$ and $T.$ We now show that the line $s$ is closed in the quotient $\mathcal{A}/\Gamma.$ We consider $(p,q)\in\Z^2\backslash(0,0)$ such that $0=pt_1+qt_2$ (if $t_2\neq0,$ (\ref{structure gamma 2}) implies that $t_1/t_2$ belongs to $\Q$). Since $ps_1+qs_2$ belongs to $S\Z,$ there exists $m\in\Z\backslash\{0\}$ such that  $ps_1+qs_2=mS$  (note that $m\neq 0$ since $(s_1,t_1)$ and $(s_2,t_2)$ are linearly independent). Finally,
$$m(S,0)=p(s_1,t_1)+q(s_2,t_2)$$ 
belongs to $\Gamma,$ and the result follows. Similarly, the line $t$ is closed in $\mathcal{A}/\Gamma.$
\end{proof}
\begin{rem}\label{rmk periodic}
The curves $\tilde{G}_1$ and $\tilde{G}_2$ in (\ref{dec g tilde}) are periodic, with period $S$ and $T$ respectively (see (\ref{periodicity G1 G2}), together with the definition (\ref{structure gamma 2})). 
\end{rem}
\begin{rem}
The form of condition (\ref{structure gamma 2}) exactly means that the lines $s$ and $t$ are closed in the quotient $\mathcal{A}/\Gamma.$ Indeed, let us suppose that the line $s$ is closed in $\mathcal{A}/\Gamma,$ and consider $(p,q)\in\Z^2\backslash(0,0)$ such that 
\begin{equation}\label{cond s1 s2 t1 t2}
ps_1+qs_2=S'\hspace{1cm}\mbox{and}\hspace{1cm}pt_1+qt_2=0,
\end{equation}
for some $S'\in\R\backslash\{0\}.$ We will prove the second condition in (\ref{structure gamma 2}). We may assume without loss of generality that $p$ and $q$ are relatively prime numbers, or equivalently that 
$$\alpha p+\beta q=1$$
for some $\alpha,\beta\in\Z.$ Setting $T=\frac{t_1}{q}=-\frac{t_2}{p}$ (if $p$ or $q$ is 0, then $t_1$ or $t_2$ is 0 (by (\ref{cond s1 s2 t1 t2})) and the second condition in (\ref{structure gamma 2}) is trivial), we have $t_1=qT$ and $t_2=-pT,$ and thus $t_1\Z\oplus t_2\Z\subset T\Z;$ moreover,   
$$T=(\alpha p+\beta q)T=-\alpha t_2+\beta t_1\in t_1\Z\oplus t_2\Z,$$
and thus $t_1\Z\oplus t_2\Z=T\Z,$ which is the second condition in (\ref{structure gamma 2}).
\end{rem}
\subsection{The structure of the map $g$}
Since the bundle $\tilde{Q}\rightarrow M$ introduced at the beginning of the previous section is maybe not trivial, we consider the universal covering $\pi:\tilde{M}\simeq \mathcal{A}\rightarrow M\simeq\mathcal{A}/\Gamma,$ and the pull-backs 
$$\pi^*\tilde{Q}\ \simeq\ \pi^*\tilde{Q}_E\times_{\tilde{M}}\pi^*\tilde{Q}_M\hspace{1cm}\mbox{and}\hspace{1cm}\pi^*\Sigma\ \simeq\ \pi^*\tilde{Q}\times\HA/\rho.$$
The bundle $\pi^*\tilde{Q}\rightarrow\tilde{M}$ is trivial and admits a global section $\tilde{s}$ which is a lift of the parallel frame $(e_0,e_1,e_2,e_3)\in\pi^*Q_E\times_{\tilde{M}}\pi^*Q_M$ (since $\tilde{M}$ is simply connected).  We then define the map $g:\mathcal{A}\rightarrow S^3_{\mathcal{A}}$ such that
$$\varphi=[\tilde{s},g]\hspace{.5cm}\in\hspace{.3cm} \pi^*\Sigma\ \simeq\ \pi^*\tilde{Q}\times\HA/\rho$$
where $\varphi=\pi^*\unitquat_{|M}\in\Gamma(\pi^*\Sigma)$ is the constant spinor field $\unitquat$ of $\R^4,$ restricted to $M$ and pulled-back to the universal covering of $M.$ It appears that the map $g$ is globally an horizontal lift of the Gauss map, with a very simple structure. We consider
$$G:\mathcal{A}/\Gamma\rightarrow S^2_{\mathcal{A}}\hspace{1cm}\mbox{and}\hspace{1cm}\tilde{G}:\mathcal{A}\rightarrow S^2_{\mathcal{A}},$$
the Gauss map of $M\simeq\mathcal{A}/\Gamma,$ and its lift to the universal covering; they are conformal maps by construction. We also consider the Hopf fibration 
\begin{eqnarray*}
p:\hspace{.5cm} S^3_{\mathcal{A}}\hspace{.3cm}\subset\ \HA&\rightarrow& S^2_{\mathcal{A}}\hspace{.3cm}\subset\ \Im m\ \HA\\ 
g&\mapsto& g^{-1}Ig,
\end{eqnarray*} 
with its natural horizontal distribution (see Lemma \ref{rmk lift}). 
\begin{prop}\label{prop diag}
The function $g:\mathcal{A}\rightarrow S^3_{\mathcal{A}}$ is a horizontal lift of $G$ and $\tilde{G}:$ the diagram
$$\xymatrix{
  \mathcal{A}\ar[d]_{\pi}\ar[r]^{g}\ar[rd]^{\tilde{G}}   & S^3_{\mathcal{A}} \ar[d]^p \\
    \mathcal{A}/\Gamma \ar[r]_G & S^2_{\mathcal{A}}
  }$$
commutes. Moreover, there exists a conformal map $\psi:\mathcal{A}\rightarrow\mathcal{A}$ such that
$$g'{g}^{-1}=\cos\psi\ J+\sin\psi\ K.$$
\end{prop}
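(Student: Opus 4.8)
The plan is to obtain the proposition by transcribing to the universal cover the local results already established in Section \ref{section local description}, exploiting that $\tilde M\simeq\mathcal A$ is simply connected so that every local construction there becomes global. First I would record the setup on $\tilde M$: since $\pi^*\tilde Q\to\tilde M$ is trivial, the parallel lift $\tilde s$ and the map $g=[\varphi]$ with $\varphi=\pi^*\unitquat_{|M}$ are globally defined on $\mathcal A$, and $\varphi$ satisfies the (pulled-back) Dirac equation $D\varphi=\vec H\cdot\varphi$, equivalently $\nabla_X\varphi=\eta(X)\cdot\varphi$ with $\eta=-\frac{1}{2}\sum_{j=2,3}e_j\cdot B(\cdot,e_j)$ for the second fundamental form $B$ of $M\hookrightarrow S^3$ (Theorem \ref{th main result}).

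The commutativity of the diagram is then Lemma \ref{etap2} read in the global frame $\tilde s$: there $G=g^{-1}Ig$, that is $p\circ g=g^{-1}Ig$, and $g^{-1}Ig$ is precisely the Gauss map of the immersion pulled back to $\tilde M$, namely $\tilde G=G\circ\pi$. Hence $p\circ g=\tilde G=G\circ\pi$, which is the asserted commutativity. (The sign ambiguity $g\mapsto -g$ coming from the two lifts of $s$ is irrelevant, exactly as in the proof of Proposition \ref{prop unif}, since $p(-g)=p(g)$.) That $g$ is moreover a \emph{horizontal} lift is Lemma \ref{etap1}: from $\nabla_X\varphi=\eta(X)\cdot\varphi$ that lemma gives $dg\,g^{-1}=[\eta]=\eta_1J+\eta_2K$, so $dg(X)\,g^{-1}\in\mathcal A J\oplus\mathcal A K$ for every $X$, i.e. $dg(X)\in\mathcal H_g$ in the notation of Lemma \ref{rmk lift}.

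It remains to produce $\psi$. By Proposition \ref{prop unif}, the chosen identification $\tilde M\simeq\mathcal A$ is exactly the global arc-length chart of $g$ provided by Theorem \ref{thm chart A}(1), so $g:\mathcal A\to S^3_{\mathcal A}$ is conformal with $H(g',g')\equiv\unitquat$. Writing $g=\frac{1+\sigma}{2}g_1(s)+\frac{1-\sigma}{2}g_2(t)$ with $g_i$ valued in $S^3\subset\HH$, horizontality forces $g_i'g_i^{-1}\in\R J\oplus\R K$ and the normalization forces this vector to have unit Euclidean length, so $g_i'g_i^{-1}=\cos\psi_i\,J+\sin\psi_i\,K$ for a smooth $\psi_i:\R\to\R$ (the resulting $S^1$-valued map lifts to $\R$ because its domain is simply connected). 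Then $\psi=\frac{1+\sigma}{2}\psi_1(s)+\frac{1-\sigma}{2}\psi_2(t)$ is a conformal map $\mathcal A\to\mathcal A$ satisfying $g'g^{-1}=\cos\psi\,J+\sin\psi\,K$; this is just the argument proving Theorem \ref{thm chart A}(2), now carried out once and for all on $\mathcal A$.

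The substantive content having already been established, the only point needing care — and hence the main obstacle — is the passage from local to global: one must check that the global constructions are not obstructed, namely that $g$ is a global immersion so that $H(g',g')$ is invertible on all of $\mathcal A$ and the arc-length normalization exists globally (Lemma \ref{gp inversible} together with Proposition \ref{prop unif}), and that the angle functions $\psi_i$ can be chosen globally on $\R$. Both are automatic from simple connectedness, so the proposition follows.
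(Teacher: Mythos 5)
Your proof follows exactly the same route as the paper's: the commutativity of the diagram and the horizontality of $g$ are read off from Lemmas \ref{etap2} and \ref{etap1} applied in the global parallel frame $\tilde{s}$ on $\tilde{M}\simeq\mathcal{A}$, and the existence of $\psi$ is obtained by rerunning the argument that proves (\ref{etap theta}) in Theorem \ref{thm chart A}. Where you are more explicit is in flagging that $\mathcal{A}$ is already the global arc-length chart (because the developing map of Proposition \ref{prop unif} is built from the arc-length atlas, whose transition maps $a\mapsto\pm a+b$ preserve $H(g',g')=1$), and that the angle functions $\psi_i$ lift globally to $\R$ by simple connectedness — both points the paper leaves implicit but which are exactly the reason the local construction globalizes. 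The argument is correct and well aligned with the paper.
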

\begin{proof}
We have $\tilde{G}=g^{-1}Ig$ where $g'g^{-1}$ belongs to $\mathcal{A}J\oplus\mathcal{A}K$ (see Lemmas \ref{etap2} and \ref{etap1}). This is the first part of the proposition.  Moreover, the existence of $\psi:\mathcal{A}\rightarrow\mathcal{A}$ may be proved exactly as in the proof of (\ref{etap theta}) in Theorem \ref{thm chart A}. 
\end{proof}
We finally give the structure of $g$ (recall Proposition \ref{prop unif} and the definition (\ref{structure gamma 2}) of $S$ and $T$):
\begin{prop}\label{prop structure g}
We have
\begin{equation}\label{dec g}
g=\frac{1+\sigma}{2}\ g_1+\frac{1-\sigma}{2}\ g_2
\end{equation}
with $g_1=g_1(s)$ and $g_2=g_2(t)\in S^3.$ Moreover, one of the following two situations occurs:
\begin{enumerate}
\item $g_1$ and $g_2$ are periodic curves with period $S$ and $T$ respectively;
\item $g_1$ and $g_2$ satisfy
\begin{equation}\label{anti periodicity}
g_1(s+kS)=(-1)^kg_1(s)\hspace{1cm}\mbox{and}\hspace{1cm}g_2(t+kT)=(-1)^kg_2(t)
\end{equation}
for all $s,t\in\R$ and $k\in\Z;$ in that case the subgroup $\Gamma$ is such that
\begin{equation}\label{condition Gamma}
\Gamma\subset\{(mS,nT),\ m,n\in\Z,\ m\equiv n\ [2]\}.
\end{equation}
\end{enumerate}
\end{prop}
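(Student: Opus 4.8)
The plan is to exploit the $\Gamma$-equivariance of $g$ on $\tilde M\simeq\mathcal{A}$ and to decouple it into the two variables $s$ and $t$. Since $g:\mathcal{A}\to S^3_{\mathcal{A}}$ is conformal (Proposition \ref{prop diag}) and $S^3_{\mathcal{A}}\simeq S^3\times S^3$ under (\ref{isom HA HH})--(\ref{S3A product}), writing $g=\frac{1+\sigma}{2}g_1+\frac{1-\sigma}{2}g_2$ immediately gives $g_1=g_1(s)\in S^3$ and $g_2=g_2(t)\in S^3$, which is (\ref{dec g}); in particular $g_1$ and $g_2$ never vanish, and they are nonconstant because $g'$ is invertible in $\HA$ (Lemma \ref{gp inversible}), hence so are $g_1'$ and $g_2'$ (Lemma \ref{invertible HA}). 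It remains to analyse the periodicity, i.e. the behaviour of $g$ under the deck transformations $\tau\in\Gamma$.

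The second step is to produce a sign character $\chi:\Gamma\to\{\pm1\}$ with $g\circ\tau=\chi(\tau)\,g$. The frame $(e_0,e_1,e_2,e_3)$ is globally defined and parallel, hence $\Gamma$-invariant, and $\tilde s$ is a global lift of it to $\pi^*\tilde Q$ over the connected, simply connected $\tilde M$; therefore $\tau^*\tilde s$ is another lift of the same frame, so $\tau^*\tilde s=\tilde s\cdot\epsilon(\tau)$ with $\epsilon(\tau)$ a \emph{constant} element of the kernel $\Z_2\times\Z_2$ of $Spin(2)\times Spin(2)\to SO(2)\times SO(2)$, and $\epsilon:\Gamma\to\Z_2\times\Z_2$ a homomorphism. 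From the description of $\rho$ in Remark \ref{rmk repr} one checks that $\rho$ sends $(-1,-1)$ to $\mathrm{id}_{\HA}$ and $(\pm1,\mp1)$ to $-\mathrm{id}_{\HA}$, so $\chi:=\rho\circ\epsilon:\Gamma\to\{\pm1\}$ is well defined; since $\varphi=\pi^*\unitquat_{|M}$ is $\Gamma$-invariant, the coordinate change formula in $\Sigma=\tilde Q\times\HA/\rho$ gives $g\circ\tau=\chi(\tau)\,g$, that is
$$g_1(s+s_0)=\chi(\tau)\,g_1(s)\hspace{.5cm}\mbox{and}\hspace{.5cm}g_2(t+t_0)=\chi(\tau)\,g_2(t)$$
whenever $\tau$ acts by $(s,t)\mapsto(s+s_0,t+t_0)$.

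The third step is the decoupling. Because $g_1$ and $g_2$ are nowhere zero, the displayed relations force $\chi(\tau)=1$ as soon as $s_0=0$, and likewise as soon as $t_0=0$; hence $\chi$ factors through each of the two projections $\Gamma\to S\Z$ and $\Gamma\to T\Z$ of Proposition \ref{prop unif}, and is therefore governed by the two signs $\delta_1:=\chi(\tau')$ for any $\tau'\in\Gamma$ with $s$-component $S$ and $\delta_2:=\chi(\tau'')$ for any $\tau''\in\Gamma$ with $t$-component $T$ (such elements exist since these projections are onto, by (\ref{structure gamma 2})). Picking $\tau'$ with components $(S,n^*T)$ and $\tau''$ with components $(m^*S,T)$ yields $\delta_1=\delta_2^{\,n^*}$ and $\delta_2=\delta_1^{\,m^*}$, so $\delta_1=1\iff\delta_2=1$, and thus either $\delta_1=\delta_2=1$ or $\delta_1=\delta_2=-1$. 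In the first case $g_1(s+S)=g_1(s)$ and $g_2(t+T)=g_2(t)$, which is case (1). In the second case $g_1(s+S)=-g_1(s)$ and $g_2(t+T)=-g_2(t)$, which iterates to (\ref{anti periodicity}); moreover any $\tau\in\Gamma$ has components of the form $(mS,nT)$ (as $\Gamma\subset S\Z\times T\Z$) with $(-1)^m=\chi(\tau)=(-1)^n$, i.e. $m\equiv n\ [2]$, which is (\ref{condition Gamma}).

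I expect the main obstacle to be the bundle-theoretic bookkeeping of the second step: verifying that the ambiguity in the lift $\tilde s$ acts on $g$ as a genuine \emph{global scalar} $\pm1$ rather than as a more complicated frame change. This is exactly where the (possibly non-trivial) spin structure $\tilde Q\to M$ and the precise form of $\rho$ on the covering-group kernel $\Z_2\times\Z_2$ are used; once the character $\chi$ is in hand, the decoupling via the non-vanishing of $g_1,g_2$ and the finite case analysis of $(\delta_1,\delta_2)$ are elementary.
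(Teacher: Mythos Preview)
Your proof is correct, and the overall architecture matches the paper's: establish a sign character $\varepsilon:\Gamma\to\{\pm1\}$ from the $\Gamma$-equivariance of $\varphi$ and of the parallel frame, decompose $g$ into $g_1(s),g_2(t)$ by conformality, and then show that the signs governing $g_1(s+S)$ and $g_2(t+T)$ must coincide.

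The one substantive difference is in how you couple the two signs. The paper introduces the auxiliary $\Gamma$-invariant quantity $\xi(e_0)=\langle\langle e_0\cdot\varphi,\varphi\rangle\rangle=\sigma\,g^{-1}\tch g\simeq\overline{g_1}g_2$ (the position vector of the torus in $S^3$) and reads off $\overline{g_1}(s+kS)\,g_2(t+k'T)=\overline{g_1}(s)\,g_2(t)$ whenever $(kS,k'T)\in\Gamma$; combined with the individual $\pm$-periodicities this forces the dichotomy and, in the anti-periodic case, the parity constraint on $\Gamma$. You instead argue purely from the character: since $g_1,g_2$ are nowhere zero, $\chi$ factors through \emph{each} coordinate projection $\Gamma\to S\Z$ and $\Gamma\to T\Z$, and the surjectivity of both projections (from (\ref{structure gamma 2})) then ties $\delta_1$ to $\delta_2$. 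Your route is a bit more economical---no extra geometric quantity is needed---while the paper's has the virtue of foreshadowing the representation formula $F=\sigma\,\overline g\,\tch g$ that appears immediately afterwards in Theorem~\ref{th formula F g}.
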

\begin{proof}
Identity (\ref{dec g}) is a consequence of the fact that $g$ is a conformal map. Since $\varphi=[\tilde{s},g]$ is $\Gamma$-invariant ($\varphi$ is the pull-back of a section of a bundle on $\mathcal{A}/\Gamma$) and since $\tilde{s}$ is a lift of the $\Gamma$-invariant frame $(e_0,e_1,e_2,e_3),$  we get
$$g(a+\gamma)=\varepsilon_{\gamma}\ g(a)$$
for all $a\in\mathcal{A}$ and all $\gamma\in\Gamma,$ where $\varepsilon:\Gamma\rightarrow\{\pm 1\}.$ In view of the definition (\ref{structure gamma 2}) of $S$ and $T,$ this easily gives
\begin{equation}\label{g1 g2 pm periodic}
g_1(s+S)=\pm g_1(s)\hspace{1cm}\mbox{and}\hspace{1cm}g_2(t+T)=\pm g_2(t)
\end{equation}
for all $s,t\in\R.$ The map $\xi(e_0):=\langle\langle e_0\cdot\varphi,\varphi\rangle\rangle:\mathcal{A}\rightarrow\HA$ is $\Gamma$-invariant (since $e_0$ and $\varphi$ are pull-backs of sections of bundles on $\mathcal{A}/\Gamma$), and reads, in $\tilde{s},$
\begin{equation}\label{e_0 tilde s}
\xi(e_0)=\langle\langle e_0\cdot\varphi,\varphi\rangle\rangle=\overline{[\varphi]}[e_0]\tch{[\varphi]}= \sigma g^{-1}\tch{g},
\end{equation}
since $e_0$ is represented by $[e_0]=\sigma\unitquat$ in $\tilde{s}.$ Using (\ref{dec g}), (\ref{e_0 tilde s}) reads
$$\xi(e_0)=\frac{\sigma}{2}\left(\overline{g_1}g_2+\overline{g_2}g_1\right)+\frac{1}{2}\left(\overline{g_1}g_2-\overline{g_2}g_1\right)\simeq \overline{g_1}g_2,$$
where, for the last identification, we use
\begin{eqnarray}
\R^4\subset\HA &\simeq&\HH\label{ident quat}\\
\sigma q_0\unitquat+q_1I+q_2 J+q_3 K&\simeq &q_0\unitquat+q_1I+q_2 J+q_3 K\nonumber
\end{eqnarray}
which identifies $\frac{\sigma}{2}\left(q+\overline{q}\right)+\frac{1}{2}\left(q-\overline{q}\right)\in\R^4\subset\HA$ to $q\in\HH.$ Writing that $\xi(e_0)$ is $\Gamma$-invariant, we then deduce
\begin{equation*}
\overline{g_1}(s+ps_1+q s_2)\ g_2(t+pt_1+qt_2)\ =\ \overline{g_1}(s)g_2(t)
\end{equation*}
for all $s,t\in\R$ and all $p,q\in\Z,$ and thus
\begin{equation}\label{relation periodicity g1g2}
\overline{g_1}(s+kS)g_2(t+k'T)=\overline{g_1}(s)g_2(t)
\end{equation}
for all $s,t\in\R$ and all $k,k'\in\Z$ s.t. $kS= ps_1+q s_2$ and $k'T= pt_1+q t_2$ for some $p,q\in\Z.$ Taking (\ref{g1 g2 pm periodic}) into account, the only possibilities are then the two cases in the statement of the proposition; finally, in the second case, $\Gamma$ necessarily satisfies (\ref{condition Gamma}): $\Gamma$ is exactly the set of elements of the form $(kS,k'T)$ where $kS=ps_1+qs_2$ and $k'T=pt_1+qt_2,$ and (\ref{relation periodicity g1g2}), together with (\ref{anti periodicity}), implies that $k\equiv k'\ [2].$
\end{proof}

\subsection{Spinor representation of the flat tori in $S^3$}

We obtain here an explicit description of the flat tori in $S^3;$ it follows from the general spinor representation formula of Theorem \ref{th main result}, written in frames adapted to the tori:  
\begin{thm}\label{th formula F g}
Let $\Gamma$ be a subgroup of translations of $\mathcal{A}\simeq\R^{2}$ satisfying the conditions  (\ref{structure gamma 1}) and (\ref{structure gamma 2}), and $g:\mathcal{A}\rightarrow S^3_{\mathcal{A}}$ be a conformal map such that
\begin{equation}\label{g psi th4}
g'\ g^{-1}=\cos\psi\ J+\sin\psi\ K
\end{equation}
for some conformal map $\psi:\mathcal{A}\rightarrow\mathcal{A},$ satisfying one of the two conditions in Proposition \ref{prop structure g}. Writing 
\begin{equation}\label{psi theta th4}
\psi=\theta_1+\sigma\theta_2,\hspace{1cm} \theta_1,\theta_2:\mathcal{A}\rightarrow\R,
\end{equation}
we moreover assume that $\theta_2$ belongs to $(\pi/2,\pi)$ mod. $\pi.$ Then the formula 
\begin{equation}\label{formula F g}
F=\sigma\ \overline{g}\ \tch{g}\hspace{1cm}\in\hspace{.3cm} S^3\subset\R^4\subset\HA
\end{equation}
defines a flat torus immersed in $S^3.$ Conversely, a flat torus immersed in $S^3$ is of that form.
\end{thm}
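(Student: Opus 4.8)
The plan is to prove the two implications separately: the direct statement rests on the local representation of Theorem~\ref{thm local description}, and the converse on the global analysis of the preceding two subsections (Propositions~\ref{prop unif}, \ref{prop diag} and~\ref{prop structure g}).

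For the direct statement, given $\Gamma$, $g$ and $\psi=\theta_1+\sigma\theta_2$ with $\theta_2\in(\pi/2,\pi)$ mod~$\pi$, I would set $\lambda=-2\sin\theta_2$, $\mu=2\cos\theta_2$ and define $\underline{e_2},\underline{e_3}$ by~(\ref{def alpha 2 3}). A one-line differentiation shows this pair solves the hyperbolic system~(\ref{systeme hyperbolique}), and $\lambda\mu=-2\sin2\theta_2>0$ precisely because of the hypothesis on $\theta_2$; hence Theorem~\ref{thm local description} yields a flat immersion $F_o=\int\xi$ of $\mathcal{A}$ into $\R^4$ with flat normal bundle, where $\xi=g^{-1}(\omega_2J+\omega_3K)\widehat{g}$. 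The crux is then to identify $\sigma\,\overline{g}\,\widehat{g}$ with $F_o$, up to a rigid motion of $\R^4$, and to see it lies in $S^3$. That $\sigma\overline{g}\widehat{g}$ takes its values in $S^3\subset\R^4\subset\HA$ follows from two short computations: $\widehat{\overline{\sigma\overline{g}\widehat{g}}}=-\sigma\overline{g}\widehat{g}$, using that $\xi\mapsto\overline{\xi}$ is an anti-automorphism commuting with $\xi\mapsto\widehat{\xi}$ and that $\widehat{\sigma\eta}=-\sigma\widehat{\eta}$, so $\sigma\overline{g}\widehat{g}\in\R^4$ by~(\ref{R4intoHA}); and $H(\sigma\overline{g}\widehat{g},\sigma\overline{g}\widehat{g})=H(\widehat{g},\widehat{g})=\overline{H(g,g)}=\unitquat$, using that $H$ is $\mathcal{A}$-bilinear and left-invariant under $Spin(4)$. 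For the identification with $F_o$, I would differentiate $\sigma\overline{g}\widehat{g}$ and use Lemma~\ref{etap1} together with~(\ref{etap theta}), which gives $dg\,g^{-1}=(\cos\psi\,J+\sin\psi\,K)\,da$; a short calculation then produces $d(\sigma\overline{g}\widehat{g})=\sigma\,\overline{g}(\widehat{\omega}-\omega)\widehat{g}$ with $\omega=\cos\psi\,J+\sin\psi\,K$, and since $\widehat{\omega}-\omega$ is $-2\sigma$ times the $\sigma$-part of $\omega$, expanding $\cos\psi=\cos\theta_1\cos\theta_2-\sigma\sin\theta_1\sin\theta_2$ and $\sin\psi=\sin\theta_1\cos\theta_2+\sigma\cos\theta_1\sin\theta_2$ collapses the right-hand side to $\pm\xi$. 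Thus $F=\sigma\overline{g}\widehat{g}$ is a flat immersion of $\mathcal{A}$ with flat normal bundle, lying in $S^3$. Finally $F$ is $\Gamma$-periodic: writing a generic $\gamma\in\Gamma$ as $(kS,k'T)$, which is admissible by~(\ref{structure gamma 2}), in case~(1) of Proposition~\ref{prop structure g} one has $g(a+\gamma)=g(a)$, while in case~(2) $g(a+\gamma)=(-1)^kg(a)$ with $k\equiv k'$ mod~$2$ by~(\ref{condition Gamma}), so $\sigma\overline{g}\widehat{g}$ is unchanged in both cases; hence $F$ descends to an immersion of the flat torus $\mathcal{A}/\Gamma$ into $S^3$.

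For the converse, let $M$ be a flat torus immersed in $S^3$. Its normal bundle in $\R^4$ is trivial and flat, since the position vector is a parallel unit normal section and a rank-two metric bundle carrying a parallel section is flat; so $K=K_N=0$, and I would fix global parallel orthonormal frames $(e_0,e_1)$ normal to $M$, with $e_0$ the outward unit normal of $S^3$ (that is, the position vector), and $(e_2,e_3)$ tangent. By Theorem~\ref{th main result} and Remark~\ref{rmk general idea}, $F=\int\xi$ with $\xi(X)=\langle\langle X\cdot\varphi,\varphi\rangle\rangle$ and $\varphi=\unitquat_{|M}$; pulling back to the universal cover and writing $\varphi=[\tilde{s},g]$ in a parallel lift $\tilde{s}$ of $(e_0,e_1,e_2,e_3)$, Proposition~\ref{prop unif} identifies $\tilde{M}\simeq\mathcal{A}$ and $M\simeq\mathcal{A}/\Gamma$ with $\Gamma$ as in~(\ref{structure gamma 1})--(\ref{structure gamma 2}), Proposition~\ref{prop diag} provides a conformal $\psi:\mathcal{A}\to\mathcal{A}$ with $g'g^{-1}=\cos\psi\,J+\sin\psi\,K$, and Proposition~\ref{prop structure g} shows $g$ satisfies one of the two stated conditions. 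By~(\ref{e_0 tilde s}), $\xi(e_0)=\sigma\,\overline{g}\,\widehat{g}$, and since the identification of $E$ with the normal bundle (Remark~\ref{rmk ident normal}) sends $e_0$ to the position vector, $F=\xi(e_0)=\sigma\overline{g}\widehat{g}$, which is~(\ref{formula F g}). It remains to check that $\theta_2\in(\pi/2,\pi)$ mod~$\pi$, where $\psi=\theta_1+\sigma\theta_2$: this follows from the fact that for a surface in $S^3$ the chart adapted to $g$ has $\lambda=-2\sin\theta_2$ and $\mu=2\cos\theta_2$ (Remark~\ref{asymptotic net}), combined with the positivity $\lambda\mu>0$ coming from the coherence of orientations in Lemma~\ref{lemma ei partiali}.

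I expect the technical heart to be the identity $d(\sigma\overline{g}\widehat{g})=\pm\xi$, that is, the verification that the closed-form expression $\sigma\overline{g}\widehat{g}$ is a primitive of the representation $1$-form $\xi$. This requires careful bookkeeping of the involutions $\xi\mapsto\overline{\xi}$ and $\xi\mapsto\widehat{\xi}$ of $\HA$ and of how $\sigma$ interacts with them, and it rests on the fact (Lemma~\ref{etap1}) that $dg\,g^{-1}$ has only $J$- and $K$-components, so that the normal part $\sigma\overline{g}\widehat{g}$ and the tangent part $\xi$ extract exactly the $\sigma$-even and $\sigma$-odd pieces of $dg\,g^{-1}$ respectively. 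Once this is settled, the periodicity of $\sigma\overline{g}\widehat{g}$, the norm computation, and the determination of the range of $\theta_2$ are comparatively routine, following from results already established in the paper.
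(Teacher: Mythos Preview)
Your converse argument is essentially the paper's: identify $F$ with $\langle\langle e_0\cdot\varphi,\varphi\rangle\rangle=\sigma\overline g\,\widehat g$ via Remark~\ref{rmk ident normal} (position vector $=e_0$), and invoke Propositions~\ref{prop unif}--\ref{prop structure g}. One caution: your appeal to Remark~\ref{asymptotic net} for $\lambda=-2\sin\theta_2$, $\mu=2\cos\theta_2$ is circular, since that remark points forward to Remark~\ref{metric torus S3}, which is itself a corollary of the present theorem. The paper instead computes $\partial_xF$ and $\partial_yF$ directly from $F=\sigma\overline g\,\widehat g$ and~(\ref{g psi th4}), obtaining
\[
\partial_xF=2\sin\theta_2\,g^{-1}(\sin\theta_1 J-\cos\theta_1 K)\widehat g,\qquad
\partial_yF=-2\cos\theta_2\,g^{-1}(\cos\theta_1 J+\sin\theta_1 K)\widehat g,
\]
and reads off the orientation condition $-4\sin\theta_2\cos\theta_2>0$, i.e.\ $\theta_2\in(\pi/2,\pi)$ mod~$\pi$. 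This is the same computation you need anyway, so just do it directly.

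For the direct statement your route genuinely differs. The paper does not invoke Theorem~\ref{thm local description}: from the two displayed formulas above it computes the induced metric $4(\sin^2\theta_2\,dx^2+\cos^2\theta_2\,dy^2)$, checks $F$ is an immersion when $\theta_2\neq 0\,[\pi/2]$, then computes the Gauss map as $\tilde G=-\tfrac{1}{4\sin\theta_2\cos\theta_2}\partial_xF\cdot\widehat{\partial_yF}=g^{-1}Ig$ and concludes $K=K_N=0$ from Proposition~\ref{prop pull back} since $g$ (hence $\tilde G$) is conformal. Your plan---feed $\lambda=-2\sin\theta_2$, $\mu=2\cos\theta_2$ into Theorem~\ref{thm local description} and then identify $\sigma\overline g\,\widehat g$ with a primitive of $\xi$---also works and has the virtue of explaining \emph{why} these particular $\lambda,\mu$ appear; the paper's approach is shorter and self-contained.

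One slip in your sketch: the identity $d(\sigma\overline g\,\widehat g)=\sigma\overline g(\widehat\omega-\omega)\widehat g$ is not correct as a $1$-form identity. Since $\widehat{\cdot}$ anticommutes with $\sigma$, one has $d\widehat g=\widehat\omega\,\widehat g\,\widehat{da}$ with $\widehat{da}=dx-\sigma\,dy\neq da$, so
\[
d(\sigma\overline g\,\widehat g)=\sigma\,\overline g\bigl(\widehat\omega\,\widehat{da}-\omega\,da\bigr)\widehat g.
\]
Evaluating on $\partial_x$ gives $\sigma(\widehat\omega-\omega)$ as you wrote, but on $\partial_y$ one gets $-(\widehat\omega+\omega)$. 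After expanding $\cos\psi,\sin\psi$ in $\theta_1,\theta_2$ both components do collapse to $-\xi$ with your choice of $\lambda,\mu$, so the conclusion $dF=\pm\xi$ survives; but the intermediate formula needs the $\widehat{da}$.
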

\begin{rem}\label{rmk theta2 gamma periodic}
By (\ref{g psi th4}), the functions $\cos\psi$ and $\sin\psi$ appear to be $\Gamma$-periodic, and the function $\psi$ induces a map $\mathcal{A}/\Gamma\rightarrow \mathcal{A}/\Gamma_{2\pi}$ where $\Gamma_{2\pi}=2\pi\Z\oplus\sigma\ 2\pi\Z\subset\mathcal{A}.$ Moreover, since $\theta_2:\mathcal{A}\rightarrow\R$ induces a map $\mathcal{A}/\Gamma\rightarrow \R/2\pi\Z$ and is assumed to belong to $(\pi/2,\pi)$ mod. $\pi,$ $\theta_2$ is in fact $\Gamma$-periodic.
\end{rem}
\begin{proof}
We first assume that $F:M\rightarrow S^3$ is the immersion of a flat torus, and we fix an orientation on $M.$ Since its Gauss map $G:M\rightarrow\mathcal{Q}\subset\Lambda^2\R^4$ is regular (as it is for every surface in $S^3$), we may consider the Lorentz structure induced on $M$ by $G$ and compatible with the orientation; we have $M\simeq \mathcal{A}/\Gamma,$ where $\Gamma$ is a subgroup of translations of $\mathcal{A}\simeq\R^{2}$ which satisfies (\ref{structure gamma 1}) and (\ref{structure gamma 2}). We now consider $\varphi\in\Gamma(\Sigma)$ the restriction to $M$ of the constant spinor field $\unitquat\in\HA$ of $\R^4.$ As above, we consider the pull-backs of the bundles and sections to the universal covering $\pi:\tilde{M}\simeq\mathcal{A}\rightarrow M\simeq \mathcal{A}/\Gamma.$ We fix $\tilde{s},$ a global section of $\pi^*\tilde{Q}$ which is a lift of a parallel frame $(e_0,e_1,e_2,e_3),$ where $e_0$ is here the unit outer normal vector of the sphere and $(e_0,e_1)$ and $(e_2,e_3)$ are respectively normal and tangent to the torus; we have
$$F\simeq\langle\langle e_0\cdot\varphi,\varphi\rangle\rangle,$$
since, for a surface in $S^3,$ the position vector $F$ coincides with the unit outer normal vector of the sphere (see (\ref{ident trivial})), and thus 
$$F\simeq\overline{[\varphi]}[e_0]\tch{[\varphi]}=\sigma\ \overline{g}\ \tch{g}$$
where $g:\tilde{M}\simeq\mathcal{A}\rightarrow S^3_{\mathcal{A}}\subset\HA$ represents the spinor field $\varphi$ in $\tilde{s}.$ Easy computations using (\ref{g psi th4}) and (\ref{psi theta th4}) then yield
\begin{equation}\label{formule dxF}
\partial_xF\ =\ 2\sin\theta_2\ g^{-1}\left(\sin\theta_1J-\cos\theta_1K\right)\tch{g}
\end{equation}
and
\begin{equation}\label{formule dyF}
\partial_yF\ =\ -2\cos\theta_2\ g^{-1}\left(\cos\theta_1J+\sin\theta_1K\right)\tch{g};
\end{equation}
using these formulas, the basis $(\partial_xF,\partial_yF)$ has the orientation of the basis $(dF(e_2),$ $dF(e_3))$ if and only if the determinant 
$$-4\sin\theta_2\cos\theta_2\left|\begin{array}{rr}\sin\theta_1&\cos\theta_1\\-\cos\theta_1&\sin\theta_1\end{array}\right|=-4\sin\theta_2\cos\theta_2$$
is positive ($dF(e_2)\simeq\langle\langle e_2\cdot\varphi,\varphi\rangle\rangle $ is $g^{-1}J\tch{g}$ and $dF(e_3)\simeq\langle\langle e_3\cdot\varphi,\varphi\rangle\rangle$ is $g^{-1}K\tch{g}$ (see (\ref{ident trivial}))), which reads
\begin{equation}\label{theta_2 pi 1}
\theta_2\in(\pi/2,\pi)\ mod\ \pi.
\end{equation}
We now prove the direct statement: we assume that $F$ is given by (\ref{formula F g}); using (\ref{formule dxF}) and (\ref{formule dyF}) we get 
\begin{equation}\label{formulas partial F square}
\overline{ \partial_xF}\ \partial_xF=4\sin^2\theta_2,\hspace{1cm}\overline{ \partial_yF} \ \partial_yF=4\cos^2\theta_2
\end{equation}
and
\begin{equation}\label{formulas partial F cross}
\frac{1}{2}\left(\overline{ \partial_xF}\ \partial_yF+\overline{ \partial_yF}\ \partial_xF\right)=0.
\end{equation}
This implies that $F$ is an immersion if $\theta_2\neq 0\ [\pi/2]$ and that 
$$\frac{1}{2\sin\theta_2}\partial_xF\hspace{.5cm}\mbox{and}\hspace{.5cm}-\frac{1}{2\cos\theta_2}\partial_yF$$ 
form an orthonormal basis tangent to the immersion in that case, which is moreover positively oriented if $\theta_2$ belongs to $(\pi/2,\pi)\mod \pi$ (on the torus, we choose the orientation induced by $F,$ i.e. such that $(\partial_xF,\partial_yF)$ is positively oriented). Thus the Gauss map is given by
$$\tilde{G}=\frac{1}{2\sin\theta_2}\partial_xF\wedge-\frac{1}{2\cos\theta_2}\partial_yF\simeq-\frac{1}{4\sin\theta_2\cos\theta_2}\ \partial_xF\ \tch{\partial_yF}$$
(see (\ref{ident clifford})), which, by (\ref{formule dxF}) and  (\ref{formule dyF}), easily gives $\tilde{G}=g^{-1}Ig.$ Since $g$ is a conformal map, so is $\tilde{G},$ which implies that the immersion is flat with flat normal bundle (see Proposition \ref{prop pull back}). 
\end{proof}
\begin{rem} \label{metric torus S3}
By (\ref{formulas partial F square})-(\ref{formulas partial F cross}), the metric of a flat torus in $S^3$ is given by
$$4(\sin^2\theta_2\ dx^2 +\cos^2\theta_2\ dy^2)$$
in the special chart $a=x+\sigma y.$
\end{rem}
\subsection{Consequence: the description of the flat tori in $S^3$}\label{csq flat tori}

We consider here the Hopf fibration 
\begin{eqnarray}
S^3\hspace{.3cm}\subset\ \HH&\rightarrow& S^2\hspace{.3cm}\subset\ \Im m\ \HH\label{hopf fibration classic}\\
q&\mapsto& q^{-1}Iq;\nonumber
\end{eqnarray}
we recall that a unit speed curve $\gamma:I\rightarrow S^3$ is said to be  horizontal if \
$$\gamma'\gamma^{-1}=\cos\psi\ J+\sin\psi\ K$$ 
for some function $\psi:I\rightarrow\R.$ As a corollary of Theorem \ref{th formula F g}, we obtain the following description of the flat tori in $S^3$ \cite{B,Sa,S,K}:
\begin{thm}\label{thm bianchi} 
A flat torus immersed in $S^3$ may be written as a product
\begin{equation}\label{F g1 g2}
F=\overline{g_1}\ g_2
\end{equation}
in the quaternions, where $g_1$ and $g_2$ are two horizontal closed curves in $S^3\subset\HH.$  
\end{thm}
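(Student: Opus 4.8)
The plan is to deduce Theorem \ref{thm bianchi} directly from the representation formula in Theorem \ref{th formula F g}. Starting from a flat torus $F:M\hookrightarrow S^3$, Theorem \ref{th formula F g} provides $g=\frac{1+\sigma}{2}g_1+\frac{1-\sigma}{2}g_2$ with $g_1=g_1(s)$, $g_2=g_2(t)\in S^3$, and $F\simeq\sigma\ \overline{g}\ \tch{g}$. First I would unwind what $\sigma\ \overline g\ \tch g$ becomes under the identification (\ref{ident quat}) $\R^4\subset\HA\simeq\HH$: as in the computation of $\xi(e_0)$ in the proof of Proposition \ref{prop structure g}, writing $g=\frac{1+\sigma}{2}g_1+\frac{1-\sigma}{2}g_2$ one gets $\overline g\tch g=\frac{1+\sigma}{2}\overline{g_1}g_2+\frac{1-\sigma}{2}\overline{g_2}g_1$ (using that $\tch{}$ acts on $\HA$ by conjugating the Lorentz-number coefficients, so $\tch g=\frac{1-\sigma}{2}g_1+\frac{1+\sigma}{2}g_2$), hence $\sigma\overline g\tch g=\frac{\sigma}{2}(\overline{g_1}g_2+\overline{g_2}g_1)+\frac12(\overline{g_1}g_2-\overline{g_2}g_1)$, which is identified with $\overline{g_1}g_2\in\HH$. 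This yields (\ref{F g1 g2}).

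Next I would verify that $g_1$ and $g_2$ are horizontal unit-speed curves in $S^3\subset\HH$. Unit speed: by Theorem \ref{thm chart A} part 1 (inherited by the global situation of Proposition \ref{prop diag}) we have $H(g',g')\equiv\unitquat$; decomposing $g'g^{-1}=\frac{1+\sigma}{2}g_1'g_1^{-1}+\frac{1-\sigma}{2}g_2'g_2^{-1}$ as in the proof of Theorem \ref{thm chart A}, the identity $H(g',g')=1$ splits into $H(g_1'g_1^{-1},g_1'g_1^{-1})=H(g_2'g_2^{-1},g_2'g_2^{-1})=1$, i.e. $|g_1'|=|g_2'|=1$ in $\HH$. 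Horizontality: from (\ref{g psi th4}) and $\psi=\frac{1+\sigma}{2}\psi_1(s)+\frac{1-\sigma}{2}\psi_2(t)$ we read off $g_1'g_1^{-1}=\cos\psi_1\,J+\sin\psi_1\,K$ and $g_2'g_2^{-1}=\cos\psi_2\,J+\sin\psi_2\,K$, with $\psi_1,\psi_2$ real-valued, which is precisely the horizontality condition for the classical Hopf fibration (\ref{hopf fibration classic}).

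It remains to show $g_1$ and $g_2$ are \emph{closed}. This is where Proposition \ref{prop structure g} and the structure of $\Gamma$ from Proposition \ref{prop unif} enter. By Proposition \ref{prop unif}, $s$ ranges over $\R/S\Z$ and $t$ over $\R/T\Z$ on the quotient torus, and by Proposition \ref{prop structure g} either (i) $g_1,g_2$ are genuinely $S$- and $T$-periodic, in which case $\gamma_1:=g_1:\R/S\Z\to S^3$ and $\gamma_2:=g_2:\R/T\Z\to S^3$ are closed and we are done; or (ii) $g_i$ is only anti-periodic, $g_i(\cdot+\text{period})=-g_i(\cdot)$, in which case it becomes closed after passing to the doubled period $2S$, resp.\ $2T$ — note $(-1)^2=1$, so $g_1$ has period $2S$ and $g_2$ period $2T$, both still horizontal unit-speed closed curves, and (\ref{F g1 g2}) is unaffected since $(-g_1)$ paired with $(-g_2)$ over the index set (\ref{condition Gamma}) (where $m\equiv n\bmod 2$) still reproduces $F=\overline{g_1}g_2$ consistently on $M$. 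The main obstacle is bookkeeping case (ii): one must check that replacing the period by its double is compatible with the lattice $\Gamma$, i.e.\ that $\Gamma\subset\{(mS,nT):m\equiv n\ [2]\}$ (exactly the content of (\ref{condition Gamma})) guarantees the product $\overline{g_1}g_2$ is well defined on $\mathcal A/\Gamma$ and equals $F$; once this is granted, the two curves $g_1,g_2$ (with the appropriate period $S$ or $2S$, $T$ or $2T$) are the desired horizontal closed curves and Theorem \ref{thm bianchi} follows. Conversely, given two such curves one defines $g$ by (\ref{dec g}), checks the hypotheses of Theorem \ref{th formula F g} hold (after possibly reparametrizing so that $\theta_2\in(\pi/2,\pi)$ mod $\pi$, using the freedom in the choice of normal frame), and obtains the torus via (\ref{formula F g}).
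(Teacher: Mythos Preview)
Your proof is correct and follows essentially the same route as the paper: invoke Theorem~\ref{th formula F g} to get $F=\sigma\,\overline{g}\,\tch{g}$, decompose $g$ into $g_1(s),g_2(t)$ and compute $\sigma\,\overline{g}\,\tch{g}\simeq\overline{g_1}g_2$ via the identification~(\ref{ident quat}), then read off horizontality from (\ref{g psi th4}) and closedness from Proposition~\ref{prop structure g}. The paper's proof is terser---it simply states that $g_1,g_2$ are horizontal (unit speed being implicit in $g'g^{-1}=\cos\psi\,J+\sin\psi\,K$) and periodic with period $S,T$ or $2S,2T$---whereas you spell out the unit-speed verification and the case analysis for anti-periodicity; this is fine and arguably clearer.

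One remark: your final paragraph sketching a converse is not needed, since the theorem as stated only claims one direction. Your suggestion that one can always arrange $\theta_2\in(\pi/2,\pi)$ mod~$\pi$ ``using the freedom in the choice of normal frame'' is not quite right: this is a genuine condition on the pair $(g_1,g_2)$ (equivalently on $\psi_1,\psi_2$, cf.\ condition~(\ref{condition psi 1 2}) in the remarks following the theorem), not something that can be absorbed by a frame change. This does not affect the proof of the stated theorem, but you should drop or correct that sentence.
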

\begin{proof}
By Theorem \ref{th formula F g}, the torus is represented by a map $g:\mathcal{A}\rightarrow S^3_{\mathcal{A}}$. Writing
$$g\ =\ \frac{1+\sigma}{2}\ g_1\ +\ \frac{1-\sigma}{2}\ g_2$$
with $g_1,g_2\in S^3\subset\HH,$ formula (\ref{formula F g}) gives 
$$F=\frac{\sigma}{2}\left(\overline{g_1}g_2+\overline{g_2}g_1\right)+\frac{1}{2}\left(\overline{g_1}g_2-\overline{g_2}g_1\right)\simeq \overline{g_1}g_2,$$
where we use the identification (\ref{ident quat}). Since $g$ is a conformal and horizontal map, the maps $g_1$ and $g_2$ are curves
$$\begin{array}[t]{rcl}
g_1:\hspace{.3cm}\R&\rightarrow&S^3\\
s&\mapsto &g_1(s)
\end{array}
\hspace{.5cm}\mbox{and}\hspace{.5cm}
\begin{array}[t]{rcl}
g_2:\hspace{.3cm}\R&\rightarrow&S^3\\
t&\mapsto &g_2(t)
\end{array}$$
which are horizontal with respect to the Hopf fibration (\ref{hopf fibration classic}). They are respectively periodic with period $S$ and $T,$ or $2S$ and $2T.$
\end{proof}
\begin{rem}
The curves $g_1$ and $g_2$ in (\ref{F g1 g2}) appear to be the components of the constant spinor field $\varphi=\unitquat_{|M}\ \in\HA$ of $\R^4$ in a frame adapted to the torus in $S^3.$
\end{rem}
\begin{rem}
In the Kitagawa representation \cite{K}, the curves $g_1$ and $g_2$ are constructed as asymptotic curves in $S^3:$ here, they appear as horizontal curves. See Section \ref{section Kitagawa representation} for the precise relation to the Kitagawa representation.
\end{rem}
\begin{rem}
The converse statement of the theorem is true: (\ref{F g1 g2}) defines a flat torus in $S^3$ if it is an immersion, which is guaranteed if $\theta_2\neq 0\ [\pi/2]$ in the statement of Theorem \ref{th formula F g}; assuming that $\theta_2\in(\pi/2,\pi)$ mod. $\pi,$ and since $\theta_2=(\psi_1-\psi_2)/2$ (recall the definition (\ref{def psi1 psi2}) of $\psi_1$ and $\psi_2$), we get equivalently  
$$\psi_1(s)-\psi_2(t)\hspace{.3cm}\in\hspace{.5cm}(\pi,2\pi)\ mod.\ 2\pi$$
for all $s,t\in\R,$ that is, the functions $\psi_1$ and $\psi_2$ take their values in intervals of length $<\pi$ such that
\begin{equation}\label{condition psi 1 2}
(2k+1)\pi<\min\psi_1-\max\psi_2\leq\max\psi_1-\min\psi_2<2(k+1)\pi
\end{equation}
for some $k\in\Z.$ The Kitagawa representation gives a nice interpretation of this condition; see Section \ref{section Kitagawa representation} below.
\end{rem}
\begin{rem}
It may be proved that a horizontal curve $\gamma$ in $S^3,$ parameterized by arc length and biregular, has constant torsion $=\pm 1,$ where the sign depends on the orientation of $S^3.$ Moreover, the curve $\overline{\gamma}$ has the opposite torsion. Thus, if $g_1,$ $g_2$ are biregular curves, they have constant torsion $\pm 1,$ and the surface $F=\overline{g_1}g_2$ is the product of a curve with torsion $1$ by a curve with torsion $-1$; this is the form of the flat tori in $S^3$ constructed in \cite{B,Sa,S}.
\end{rem}

\subsection{The Kitagawa representation of the flat tori in $S^3$}\label{section Kitagawa representation}

We now explain how the previous results are related to the Kitagawa representation of the flat tori in $S^3$ \cite{K}. We assume that $g:\mathcal{A}\rightarrow S^3_{\mathcal{A}}$ satisfies the hypothesis of Theorem \ref{th formula F g}: formula (\ref{formula F g}) (or equivalently (\ref{F g1 g2})) defines a flat torus immersed in $S^3.$ We moreover consider $\psi: \mathcal{A}\rightarrow\mathcal{A}$ such that (\ref{g psi th4}) holds, and we set $\psi_1,\psi_2\in\R$ such that
$$\psi\ =\ \frac{1+\sigma}{2}\ \psi_1\ +\ \frac{1-\sigma}{2}\ \psi_2.$$
We begin with a simple lemma:
\begin{lem}
There exists $\alpha\in\R$ such that 
$$\sin(\psi_1(s)-\alpha)>0\hspace{1cm}\mbox{and}\hspace{1cm}\sin(\psi_2(t)-\alpha)>0$$ 
for all $s,t\in\R.$ In particular we have
$$\alpha\hspace{.3cm}\notin\hspace{.5cm} \psi_1(\R)\cup \psi_2(\R)\hspace{.3cm} \mbox{mod.}\ \pi.$$
\end{lem}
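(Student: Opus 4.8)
\emph{Proof proposal.} The plan is to extract from the immersion hypothesis a quantitative separation between the ranges of $\psi_1$ and $\psi_2$, and then produce $\alpha$ by an elementary interval argument, choosing the $2\pi$-translate of the window for $\psi_2$ correctly.

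First, since $g$ satisfies the hypotheses of Theorem \ref{th formula F g}, the function $\theta_2=(\psi_1-\psi_2)/2$ lies in $(\pi/2,\pi)$ mod.\ $\pi$ at every point; as noted in the Remarks after Theorem \ref{thm bianchi}, this is equivalent to condition (\ref{condition psi 1 2}): there is an integer $k$ with
\[
(2k+1)\pi\ <\ \min\psi_1-\max\psi_2\ \le\ \max\psi_1-\min\psi_2\ <\ 2(k+1)\pi .
\]
Adding the two outer inequalities gives $(\max\psi_1-\min\psi_1)+(\max\psi_2-\min\psi_2)<\pi$, so each of $\psi_1(\R)$ and $\psi_2(\R)$ is contained in an interval of length strictly less than $\pi$.

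Next I would reformulate the goal. Since $\sin(x-\alpha)>0$ exactly when $x$ lies in some interval $(\alpha+2m\pi,\alpha+(2m+1)\pi)$, $m\in\Z$, and since $\psi_1(\R)$ and $\psi_2(\R)$ have length $<\pi$, the two desired inequalities hold for a given $\alpha$ if and only if, for some integer $j$,
\[
\alpha\ \in\ \bigl(\max\psi_1-\pi,\ \min\psi_1\bigr)\ \cap\ \bigl(\max\psi_2-\pi+2j\pi,\ \min\psi_2+2j\pi\bigr).
\]
Each of these two open intervals is nonempty because the corresponding oscillation is $<\pi$, so the point is to pin down an integer $j$ for which they overlap. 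I claim $j=k+1$ works: the overlap is equivalent to $\max\psi_1-\pi<\min\psi_2+2(k+1)\pi$ together with $\max\psi_2-\pi+2(k+1)\pi<\min\psi_1$, i.e.\ to $\max\psi_1-\min\psi_2<(2k+3)\pi$ and $\min\psi_1-\max\psi_2>(2k+1)\pi$; the first is implied by $\max\psi_1-\min\psi_2<2(k+1)\pi$, and the second is precisely the leftmost inequality above.

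Hence the intersection is nonempty; choosing any $\alpha$ in it, one reads off $\psi_1(s)-\alpha\in(0,\pi)$ and $\psi_2(t)-\alpha+2(k+1)\pi\in(0,\pi)$ for all $s,t$, whence $\sin(\psi_1(s)-\alpha)>0$ and, by $2\pi$-periodicity of $\sin$, $\sin(\psi_2(t)-\alpha)>0$. The ``in particular'' claim is then immediate: if $\alpha\equiv\psi_1(s)$ mod.\ $\pi$ for some $s$, then $\psi_1(s)-\alpha\in\pi\Z$ and $\sin(\psi_1(s)-\alpha)=0$, contradicting positivity; likewise for $\psi_2$. The only real obstacle is this alignment of the $2\pi$-periodic window for $\psi_2$ against that of $\psi_1$: the set of admissible integers $j$ forms an open interval of length strictly between $\tfrac12$ and $1$, so containing an integer is not automatic, and it is exactly the two-sided strict inequality of (\ref{condition psi 1 2}) that guarantees the admissible value $j=k+1$ lies in it; everything else is routine bookkeeping with $\min$, $\max$ and the periodicity of $\sin$.
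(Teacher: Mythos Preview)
Your proof is correct and follows essentially the same route as the paper's: both arguments start from condition (\ref{condition psi 1 2}), and the interval you produce with $j=k+1$, namely $(\max\psi_1-\pi,\min\psi_1)\cap(\max\psi_2+(2k+1)\pi,\min\psi_2+2(k+1)\pi)$, coincides with the paper's choice $(\max\psi_2+(2k+1)\pi,\min\psi_1)$ (the two extra endpoint constraints you carry are automatically implied by the other two via (\ref{condition psi 1 2})). Two minor quibbles: your ``if and only if'' for a \emph{given} $\alpha$ should allow a $2\pi\Z$-shift in the $\psi_1$-window as well (harmless, since only the ``if'' direction is used), and the length of the admissible $j$-interval is in $(\tfrac12,1]$ rather than strictly below $1$ (equality when both $\psi_i$ are constant), but neither affects the argument.
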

\begin{proof}
From (\ref{condition psi 1 2}) we get
$$\max\psi_2+(2k+1)\pi<\min\psi_1\hspace{.5cm}\mbox{and}\hspace{.5cm}\max\psi_1<\min\psi_2+2(k+1)\pi.$$
We take $\alpha$ such that
$$\max\psi_2+(2k+1)\pi<\alpha<\min\psi_1.$$
We have
$$0<\psi_1-\alpha<\max\psi_1-(\max\psi_2+(2k+1)\pi),$$
and since
$$\max\psi_1-(\max\psi_2+(2k+1)\pi)<(\min\psi_2+2(k+1)\pi)-(\max\psi_2+(2k+1)\pi)\leq \pi,$$
we get $\sin(\psi_1-\alpha)>0.$ Similarly, we have
$$\psi_2-\alpha\leq\max\psi_2-\alpha<\max\psi_2-(\max\psi_2+(2k+1)\pi)=-(2k+1)\pi$$
and 
$$\psi_2-\alpha\geq\min\psi_2-\alpha>\min\psi_2-\min\psi_1\geq\min\psi_2-\max\psi_1>-2(k+1)\pi,$$
and thus $\sin(\psi_2-\alpha)>0.$ 
\end{proof}
For such a number $\alpha,$ we set
$$J_{\alpha}\ =\ \cos\alpha\ J\ +\ \sin\alpha\ K\hspace{.3cm}\in\ \HH$$
and consider the Hopf fibration
\begin{eqnarray*}
h_\alpha:\hspace{1cm}S^3\hspace{.3cm}\subset\ \HH&\rightarrow& S^2\hspace{.3cm}\subset\ \Im m\ \HH\\
u&\mapsto& u^{-1}\ J_{\alpha}\ u.
\end{eqnarray*}
We then consider the curves
$$\gamma_1=h_{\alpha}(g_1)\hspace{.5cm}\mbox{and}\hspace{.5cm}\gamma_2= h_\alpha(g_2),$$
where $g_1$ and $g_2$ are such that
$$g\ =\ \frac{1+\sigma}{2}\ g_1\ +\ \frac{1-\sigma}{2}\ g_2.$$
\begin{lem}
The curves $\gamma_1,\gamma_2:S^1\rightarrow S^2$ are immersions with geodesic curvatures
\begin{equation}\label{geod curv gamma}
k_{\gamma_1}=cotan (\psi_1-\alpha)\hspace{.5cm}\mbox{and}\hspace{.5cm}k_{\gamma_2}=cotan(\psi_2-\alpha).
\end{equation}
They moreover satisfy
\begin{equation}\label{geod curv vide}
k_{\gamma_1}(S^1)\ \cap\ k_{\gamma_2}(S^1)\ =\ \emptyset.
\end{equation}
We assume here that $S^2$ is oriented w.r.t. its inner unit normal vector.
\end{lem}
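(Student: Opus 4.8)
The plan is to compute the two curves $\gamma_i=h_\alpha(g_i)$, $i=1,2$, explicitly from the relation $g_i'g_i^{-1}=\cos\psi_i\,J+\sin\psi_i\,K$, which holds because $g=\frac{1+\sigma}{2}g_1+\frac{1-\sigma}{2}g_2$ satisfies (\ref{g psi th4})--(\ref{psi theta th4}). First I would pass to the rotated orthonormal frame of $\Im m\ \HH$ given by $J_\alpha=\cos\alpha\,J+\sin\alpha\,K$ and $K_\alpha=-\sin\alpha\,J+\cos\alpha\,K$. One checks at once that $(I,J_\alpha,K_\alpha)$ still obeys the quaternionic rules $IJ_\alpha=K_\alpha$, $J_\alpha K_\alpha=I$, $K_\alpha I=J_\alpha$, and has the same orientation as $(I,J,K)$, and that, writing $\bar\psi_i:=\psi_i-\alpha$, one has $g_i'g_i^{-1}=\cos\bar\psi_i\,J_\alpha+\sin\bar\psi_i\,K_\alpha$.

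Since $g_i\in S^3$, so that $g_i^{-1}=\overline{g_i}$ and $\gamma_i=g_i^{-1}J_\alpha g_i$, differentiating and using $g_i'=(g_i'g_i^{-1})g_i$ gives $\gamma_i'=g_i^{-1}\,[J_\alpha,\,g_i'g_i^{-1}]\,g_i$; with $[J_\alpha,J_\alpha]=0$ and $[J_\alpha,K_\alpha]=2I$ this collapses to
$$\gamma_i'=2\sin\bar\psi_i\;g_i^{-1}Ig_i .$$
Because $\alpha$ was chosen (in the previous lemma) so that $\sin\bar\psi_1>0$ and $\sin\bar\psi_2>0$ everywhere, $\gamma_i'$ never vanishes, so $\gamma_i$ is an immersion and $T_i:=g_i^{-1}Ig_i$ is its unit tangent. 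Moreover, by Proposition \ref{prop structure g}, $g_1$ is either $S$-periodic or satisfies $g_1(s+S)=-g_1(s)$, and similarly for $g_2$ with $T$; since $h_\alpha(-u)=h_\alpha(u)$, the maps $\gamma_1,\gamma_2$ descend to $S^1=\R/S\Z$ and $\R/T\Z$ respectively. This gives the first assertion.

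Next I would differentiate once more: $T_i'=g_i^{-1}[I,\,g_i'g_i^{-1}]g_i$, and using $[I,J_\alpha]=2K_\alpha$, $[I,K_\alpha]=-2J_\alpha$ this equals $2\cos\bar\psi_i\,g_i^{-1}K_\alpha g_i-2\sin\bar\psi_i\,\gamma_i$. Subtracting the component along the unit normal $\gamma_i$ of $S^2\subset\Im m\ \HH$ and dividing by the speed $|\gamma_i'|=2\sin\bar\psi_i$, the geodesic curvature vector in the arc-length parameter $\ell$ is
$$\frac{D}{d\ell}T_i=\cot\bar\psi_i\;g_i^{-1}K_\alpha g_i .$$
Here $g_i^{-1}K_\alpha g_i$ is a unit vector orthogonal to $T_i$ and to $\gamma_i$; since $(I,K_\alpha,-J_\alpha)$ is positively oriented and conjugation by the unit quaternion $g_i$ lies in $SO(3)$, the triple $(T_i,\,g_i^{-1}K_\alpha g_i,\,-\gamma_i)$ is positively oriented. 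As $-\gamma_i$ is the inner unit normal of $S^2$, this says exactly that $g_i^{-1}K_\alpha g_i$ is the unit normal to $\gamma_i$ making $(T_i,\,\cdot\,)$ a positively oriented frame for the chosen orientation of $S^2$; hence $k_{\gamma_i}=\cot\bar\psi_i=\cot(\psi_i-\alpha)$, which is (\ref{geod curv gamma}).

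Finally, for (\ref{geod curv vide}): the periodicity of $g_i$ makes $g_i'g_i^{-1}$, hence $\cos\psi_i,\sin\psi_i$, periodic, and the constraint $\sin(\psi_i-\alpha)>0$ forces $\psi_i$ to stay in an interval of length $<\pi$, so $\psi_i(\R)$ is a compact interval. From the choice of $\alpha$ and the inequalities $(2k+1)\pi<\min\psi_1-\max\psi_2$ and $\max\psi_1-\min\psi_2<2(k+1)\pi$ of (\ref{condition psi 1 2}) one gets $\psi_1(\R)-\alpha\subset(0,\pi)$ and $\psi_2(\R)-\alpha\subset(-2(k+1)\pi,\,-(2k+1)\pi)$, whence $\psi_1(\R)-\alpha$ and $(\psi_2(\R)-\alpha)+2(k+1)\pi$ are compact subintervals of $(0,\pi)$ that are disjoint, the separation being precisely the strict inequality $\max\psi_1-\alpha<\min\psi_2-\alpha+2(k+1)\pi$. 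Since $\cot$ has period $\pi$ and is injective on $(0,\pi)$, and $k_{\gamma_1}(S^1)=\cot(\psi_1(\R)-\alpha)$, $k_{\gamma_2}(S^1)=\cot(\psi_2(\R)-\alpha)=\cot\big((\psi_2(\R)-\alpha)+2(k+1)\pi\big)$, these two sets are disjoint, which is (\ref{geod curv vide}). I expect the main obstacle to be the orientation bookkeeping in the geodesic-curvature step — one has to be careful with the sign in $[I,K_\alpha]=-2J_\alpha$ and with the orientation of $(I,K_\alpha,-J_\alpha)$ in order to land on $+\cot$ rather than $-\cot$; everything else is routine quaternion algebra.
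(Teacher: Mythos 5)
Your proof is correct and follows essentially the same route as the paper: differentiate $\gamma_i = g_i^{-1}J_\alpha g_i$ to get $\gamma_i' = 2\sin(\psi_i-\alpha)\,g_i^{-1}Ig_i$, differentiate the unit tangent once more, isolate the tangential part, and read off the geodesic curvature coefficient, then use the range constraint on $\theta_2 = (\psi_1-\psi_2)/2$ for the disjointness. The small differences — rewriting $g_i'g_i^{-1}$ in the rotated frame $(J_\alpha, K_\alpha)$ from the outset so that $\bar\psi_i=\psi_i-\alpha$ appears directly, using the commutator identities $[J_\alpha,K_\alpha]=2I$, $[I,J_\alpha]=2K_\alpha$, $[I,K_\alpha]=-2J_\alpha$, spelling out explicitly why $(T_i, g_i^{-1}K_\alpha g_i, -\gamma_i)$ is positively oriented via $(I,K_\alpha,-J_\alpha)$ and $SO(3)$-conjugation, and replacing the paper's one-line argument ``$\cot a=\cot b$ iff $a-b\equiv 0\ [\pi]$'' by a translated-interval argument on $(0,\pi)$ — are just cleaner bookkeeping of the same computation and are all sound.
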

\begin{proof}
We have 
\begin{eqnarray*}
{\gamma'}_1&=&\overline{g_1'}J_{\alpha}g_1+\overline{g_1}J_\alpha g_1'\\
&=&-\overline{g_1}(\cos\psi_1 J+\sin\psi_1K)J_{\alpha}g_1+\overline{g_1}J_\alpha(\cos\psi_1 J+\sin\psi_1K)g_1\\
&=&2\sin(\psi_1-\alpha)\ \overline{g_1}Ig_1,
\end{eqnarray*}
which is not 0, since $\alpha\notin\psi_1(\R)\ [\pi].$ Thus (and since $\sin(\psi_1-\alpha)>0$)
$$\frac{\gamma'_1}{|\gamma'_1|}=\overline{g_1}Ig_1.$$
By an analogous computation, we get
$$\left(\frac{\gamma'_1}{|\gamma'_1|}\right)'=\left(\overline{g_1}Ig_1\right)'=2\cos\psi_1\overline{g_1}Kg_1-2\sin\psi_1\overline{g_1}Jg_1;$$
inserting finally
$$J_{\alpha}\ =\ \cos\alpha\ J\ +\ \sin\alpha\ K\hspace{.5cm}\mbox{and}\hspace{.5cm}K_{\alpha}\ =-\ \sin\alpha\ J\ +\ \cos\alpha\ K$$
in the last formula, we get
\begin{equation}\label{geod curve gamma1}
\left(\frac{\gamma'_1}{|\gamma'_1|}\right)'=2\cos(\psi_1-\alpha)\overline{g_1}K_{\alpha}g_1-2\sin(\psi_1-\alpha)\overline{g_1}J_{\alpha}g_1.
\end{equation}
The last term is normal to $S^2$ at the point $\gamma_1=\overline{g_1}J_{\alpha}g_1,$ and the tangent frame $\overline{g_1}Ig_1,$ $\overline{g_1}K_{\alpha}g_1$ is positively oriented if $S^2$ is oriented by the inner normal vector $-\overline{g_1}J_{\alpha}g_1.$ The geodesic curvature of $\gamma_1$ in $S^3$ is thus given by the coefficient of the first term, divided by $2\sin(\psi_1-\alpha)$ (the derivative in (\ref{geod curve gamma1}) has to be taken with respect to arc length), which is the first formula in (\ref{geod curv gamma}). Finally, (\ref{geod curv vide}) is a consequence of the two formulas in (\ref{geod curv gamma}) and of the condition
$$2\theta_2=\psi_1-\psi_2\hspace{.3cm} \in\hspace{.5cm}(\pi,2\pi)\ [2\pi]$$
together with the property
$$cotan(a)=cotan(b)\hspace{.5cm}\mbox{iff}\hspace{.5cm}a-b=0\ [\pi]$$
for all $a,b\neq 0\ [\pi].$
\end{proof}
We now consider the double covering
\begin{eqnarray*}
p_\alpha:\hspace{1cm}S^3&\rightarrow& US^2\\
u&\mapsto& (u^{-1}\ J_{\alpha}\ u\ ,\ u^{-1}\ I\ u),
\end{eqnarray*}
where $US^2$ denotes the unit tangent bundle of $S^2.$ If $c:\R\rightarrow S^2$ is an immersion, a curve $\hat{c}:\R\rightarrow S^3$ such that $p\circ \hat{c}=(c,c'/|c'|)$ is said to be an asymptotic lift of $c,$ which means that the curve $\hat{c}$ belongs to the Hopf cylinder $h_{\alpha}^{-1}(c)\subset S^3,$ and is such that $B(\hat{c}',\hat{c}')=0,$ where $B$ is the second fundamental form of $h_{\alpha}^{-1}(c)$ in $S^3;$ we refer to \cite{K} for more information concerning asymptotic lifts.
\\

We finally note that the curves $g_1$ and $g_2$ are asymptotic lifts of $\gamma_1$ and $\gamma_2:$ 
\begin{lem}
We have
$$p_{\alpha}(g_1)=\left(\gamma_1,\frac{\gamma'_1}{|\gamma'_1|}\right)\hspace{.5cm}\mbox{and}\hspace{.5cm}p_{\alpha}(g_2)=\left(\gamma_2,\frac{\gamma'_2}{|\gamma'_2|}\right).$$
\end{lem}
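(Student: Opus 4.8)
The plan is to unwind the definition of $p_\alpha$ and to reuse the computation already carried out in the proof of the previous lemma; once that is done the statement reduces to a short verification, and it exhibits $g_1$ and $g_2$ as asymptotic lifts of $\gamma_1$ and $\gamma_2$ in the sense recalled above. Concretely, I would treat the two components of $p_\alpha(g_1)=\bigl(g_1^{-1}J_\alpha g_1,\ g_1^{-1}Ig_1\bigr)$ separately. The first component is, by the very definitions of $h_\alpha$ and of $\gamma_1$,
$$g_1^{-1}J_\alpha g_1\ =\ h_\alpha(g_1)\ =\ \gamma_1,$$
and likewise the first component of $p_\alpha(g_2)$ equals $\gamma_2$; so only the tangent components require an argument.

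For the second component I would invoke the identity established in the proof of the preceding lemma, $\gamma_1'=2\sin(\psi_1-\alpha)\,\overline{g_1}\,I\,g_1$. Since $g_1$ takes values in $S^3\subset\HH$ we have $\overline{g_1}=g_1^{-1}$, so $\overline{g_1}\,I\,g_1$ is a unit vector of $\Im m\,\HH$; and since $\alpha$ was chosen with $\sin(\psi_1-\alpha)>0$, it follows that $|\gamma_1'|=2\sin(\psi_1-\alpha)$ and hence
$$\frac{\gamma_1'}{|\gamma_1'|}\ =\ \overline{g_1}\,I\,g_1\ =\ g_1^{-1}Ig_1,$$
which is precisely the second component of $p_\alpha(g_1)$. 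Thus $p_\alpha(g_1)=\bigl(\gamma_1,\gamma_1'/|\gamma_1'|\bigr)$, and the identical computation with $\psi_2$ and $g_2$ replacing $\psi_1$ and $g_1$ — legitimate since $\sin(\psi_2-\alpha)>0$ as well — gives $p_\alpha(g_2)=\bigl(\gamma_2,\gamma_2'/|\gamma_2'|\bigr)$.

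I do not expect any real obstacle here: the content is pure bookkeeping built on the formula for $\gamma_i'$ proved earlier, and the only point deserving a moment's attention is the sign condition $\sin(\psi_i-\alpha)>0$, which is exactly what guarantees that normalizing $\gamma_i'$ returns $g_i^{-1}Ig_i$ rather than its opposite — i.e. precisely the vector occupying the second slot of $p_\alpha$.
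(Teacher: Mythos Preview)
Your proof is correct and follows exactly the same approach as the paper: the paper's own proof is a one-line reference to the identities $\gamma_1=\overline{g_1}J_{\alpha}g_1$ and $\gamma'_1/|\gamma'_1|=\overline{g_1}Ig_1$ already established in the proof of the previous lemma, which is precisely what you do (with a bit more detail on the sign condition $\sin(\psi_i-\alpha)>0$).
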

\begin{proof}
We already noticed that
$$\gamma_1=\overline{g_1}J_{\alpha}g_1\hspace{1cm}\mbox{and}\hspace{1cm}\frac{\gamma'_1}{|\gamma'_1|}=\overline{g_1}Ig_1.$$
\end{proof}
The results in Theorems \ref{th formula F g} and \ref{thm bianchi} may thus be interpreted as follows:  a flat torus immersed in $S^3$ is a product of the form (\ref{F g1 g2}) in $S^3,$ where $g_1$ and $g_2$ are asymptotic lifts  of two curves $\gamma_1,\gamma_2$ satisfying (\ref{geod curv vide}). This is the Kitagawa representation of the flat tori in $S^3;$ see \cite{K}, and also \cite{GM,W}. 

\subsection{The Gauss map image of a flat torus in $S^3$}

Since the map $g$ appears to be a lift of the Gauss map (Proposition \ref{prop diag}), we easily deduce the structure of the Gauss map image of the flat tori in $S^3$ \cite{E,W}:
\begin{cor}\label{corollary gauss map}
Let us consider the Gauss map 
$$G:\mathcal{A}/\Gamma\rightarrow S^2_{\mathcal{A}}\simeq S^2\times S^2$$
of a flat torus $F:\mathcal{A}/\Gamma\hookrightarrow S^3.$ Its image is a product of closed curves $\gamma_1\times\gamma_2$ whose geodesic curvatures $k_
{\gamma_1}$ and  $k_{\gamma_2}$ satisfy
\begin{equation}\label{ineq geod curv}
-\pi<\int_{I}k_{\gamma_1}-\int_{J}k_{\gamma_2}<\pi
\end{equation}
for all subintervals $I,J$ of $\R,$ and
\begin{equation}\label{total curvature zero}
\int_{\gamma_1}k_{\gamma_1}=\int_{\gamma_2}k_{\gamma_2}=0.
\end{equation}
\end{cor}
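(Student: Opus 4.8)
The plan is to read everything off the two quaternion curves $g_1,g_2$ of Proposition \ref{prop structure g} and the angle functions $\psi_1,\psi_2$, exploiting that $g$ is a horizontal lift of the Gauss map. By Lemma \ref{etap2} and Proposition \ref{prop diag} the Gauss map is $\tilde G=g^{-1}Ig$; writing $g=\frac{1+\sigma}{2}\,g_1+\frac{1-\sigma}{2}\,g_2$ as in Proposition \ref{prop structure g} and using the splitting $S^2_{\mathcal{A}}\simeq S^2\times S^2$ of (\ref{S2A product}), the Gauss map corresponds to the pair $(\tilde G_1,\tilde G_2)$ with $\tilde G_i=g_i^{-1}Ig_i:\R\to S^2$. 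Since $g_1$ is $S$-(anti)periodic and $g_2$ is $T$-(anti)periodic by Proposition \ref{prop structure g}, each $\tilde G_i$ is genuinely periodic, of period $S$ resp. $T$; differentiating and using $g_i'g_i^{-1}=\cos\psi_i\,J+\sin\psi_i\,K$ gives $\tilde G_i'=2\,g_i^{-1}(\cos\psi_i\,K-\sin\psi_i\,J)g_i$, so $|\tilde G_i'|\equiv 2$ and $\tilde G_i$ is an immersion. Hence the image of $G$ is the product $\gamma_1\times\gamma_2$ of the closed immersed curves $\gamma_i$ traced by the $\tilde G_i$.

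Next I would compute the geodesic curvatures of $\gamma_1,\gamma_2$. Differentiating $\tilde G_i'/|\tilde G_i'|$ once more and simplifying with $IJ=K$, $JK=I$, $KI=J$, one finds $\frac{d}{ds}\!\left(\tilde G_1'/|\tilde G_1'|\right)=-\tilde G_1-\psi_1'\,g_1^{-1}(\cos\psi_1\,J+\sin\psi_1\,K)g_1$; the first summand is orthogonal to $S^2$ at $\tilde G_1$ and the second is tangent to it, so with the natural orientation of each $S^2$ factor (as in the Remark after Proposition \ref{prop pull back}) the geodesic curvature of $\gamma_i$ is $k_{\gamma_i}=\tfrac12\psi_i'$. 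Consequently $\int_I k_{\gamma_i}$, integrated against arc length, equals the increment of $\psi_i$ over any subinterval $I$; in particular, integrating over one full period, $\int_{\gamma_i}k_{\gamma_i}=\psi_i(S)-\psi_i(0)$ (resp. with $T$), which lies in $2\pi\Z$ since $\cos\psi_i$ and $\sin\psi_i$ are periodic.

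Finally I would feed in the orientation hypothesis of Theorem \ref{th formula F g}: choosing on the torus the orientation induced by $F$ we may assume $\theta_2\in(\pi/2,\pi)\bmod\pi$, equivalently $\psi_1(s)-\psi_2(t)\in(\pi,2\pi)\bmod 2\pi$ for all $s,t$ (recall $\psi_1=\theta_1+\theta_2$, $\psi_2=\theta_1-\theta_2$). Freezing one variable shows $\psi_1$ and $\psi_2$ are bounded, and in fact inequality (\ref{condition psi 1 2}) holds, so each of $\psi_1,\psi_2$ takes its values in an interval of length $<\pi$. Hence $\psi_i(S)-\psi_i(0)$ (resp. $T$) is an element of $2\pi\Z$ of absolute value $<\pi$, i.e. $0$, which is (\ref{total curvature zero}); and subtracting the two inequalities in (\ref{condition psi 1 2}) gives $(\max\psi_1-\min\psi_1)+(\max\psi_2-\min\psi_2)<\pi$, so for any intervals $I,J$, $\bigl|\int_I k_{\gamma_1}-\int_J k_{\gamma_2}\bigr|\le(\max\psi_1-\min\psi_1)+(\max\psi_2-\min\psi_2)<\pi$, which is (\ref{ineq geod curv}).

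The step I expect to be the main obstacle is the geodesic-curvature computation of the second paragraph: performing the two differentiations of $g_i^{-1}Ig_i$, correctly separating the result into its components normal and tangent to $S^2$, and pinning the normalization so that $k_{\gamma_i}$ comes out exactly $\tfrac12\psi_i'$ — equivalently, so that $\int_I k_{\gamma_i}$ is exactly the increment of $\psi_i$ over $I$, with the right sign relative to the chosen orientations. Everything after that is formal, resting only on the elementary constraint (\ref{condition psi 1 2}) on $\psi_1,\psi_2$.
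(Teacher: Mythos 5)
Your proof is correct and takes essentially the paper's approach: the paper likewise identifies $\int_I k_{\gamma_1}-\int_J k_{\gamma_2}$ with an increment of $2\theta_2=\psi_1-\psi_2$ (delegating the geodesic-curvature identification to Remark \ref{rmk interpretation psi 1 2} and the fact that total geodesic curvature is preserved under Hopf projection) and closes via $\theta_2\in(\pi/2,\pi)\bmod\pi$, whereas you bound the $\psi_1$- and $\psi_2$-increments separately using (\ref{condition psi 1 2}), which is the same arithmetic. One harmless slip: the normal component of $\tfrac{d}{ds}(\tilde G_1'/|\tilde G_1'|)$ is $-2\tilde G_1$, not $-\tilde G_1$ (you have divided by $|\tilde G_1'|$ once but not yet reparametrized by arc length); after also dividing the tangential part by the speed $|\tilde G_1'|=2$ one gets $k_{\gamma_1}=\tfrac12\psi_1'$ exactly as you state.
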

\begin{proof} 
By Theorem \ref{th formula F g}, $F=\sigma \overline{g}\tch{g},$ where $g:\mathcal{A}\rightarrow S^3_{\mathcal{A}}$ is a horizontal and conformal map; moreover, the map $g$ is of the form
$$g=(g_1,g_2)$$
where $s\mapsto g_1(s)$ and $t\mapsto g_2(t)$ are two closed curves, respectively with period $S$ and $T,$ or $2S$ and $2T$ (Proposition \ref{prop structure g}). Since $g$ is a lift of the Gauss map $G$, the image of $G$ is also a product of closed curves $\gamma_1\times\gamma_2$ (precisely, $\gamma_i$ is the projection of $g_i$ by the Hopf fibration (\ref{hopf fibration classic}), $i=1,2$). Moreover, writing
\begin{equation}\label{int psi}
\psi(a_2)-\psi(a_1)=\int_{(a_1,a_2)}d\psi
\end{equation}
for all $a_1,a_2\in\mathcal{A},$ and $\psi(s,t)=\frac{1+\sigma}{2}\ \psi_1(s)+\frac{1-\sigma}{2}\ \psi_2(t),$ we get
\begin{eqnarray*}
d\psi&=&\partial_s\psi\ ds+\partial_t\psi\ dt\\
&=&\frac{1}{2}\left(\psi'_1\ ds+\psi'_2\ dt\right)+\frac{\sigma}{2}\left(\psi'_1\ ds-\psi'_2\ dt\right),
\end{eqnarray*}
and thus, taking the $\sigma$- component of (\ref{int psi}),
\begin{eqnarray}
\theta_2(a_2)-\theta_2(a_1)&=&\frac{1}{2}\left(\int_{(s_1,s_2)}\psi'_1\ ds-\int_{(t_1,t_2)}\psi'_2\ dt\right)\nonumber\\
&=&\frac{1}{2}\left(\int_{I}k_{\gamma_1}-\int_{J}k_{\gamma_2}\right)\label{theta2 geodesic curvatures},
\end{eqnarray}
where $\gamma_1:I\rightarrow S^2$ (resp. $\gamma_2:J\rightarrow S^2$) is the projection of $g_1:(s_1,s_2)\rightarrow S^3$ (resp. $g_2:(t_1,t_2)\rightarrow S^3$) parameterized by arc length. The last equality is a consequence of the fact that $\psi'_1$ and $\psi'_2$ are the geodesic curvatures of $g_1$ and $g_2$ (Remark \ref{rmk interpretation psi 1 2}) and that the integrals with respect to arc length of the geodesic curvatures of $g_1$ and $g_2$ and of their projections on $S^2$ coincide.

Since $\theta_2$ belongs to $(\pi/2,\pi)$ mod. $\pi,$ $\theta_2(a_2)-\theta_2(a_1)$ belongs to $(-\pi/2,\pi/2)$ and (\ref{ineq geod curv}) follows. Equation (\ref{total curvature zero}) is also a consequence of (\ref{theta2 geodesic curvatures}) together with the fact that $\theta_2$ is $\Gamma$-periodic (Remark \ref{rmk theta2 gamma periodic}).
\end{proof}

\appendix
\section{Auxiliary results on Lorentz numbers and quaternions}

\subsection{Invertible elements in the algebra $\HA$}
We describe here the invertible elements in the algebra of quaternions with coefficients in $\mathcal{A}.$ We first note that the set of invertible elements of $\mathcal{A}$ is
\begin{equation}\label{invertible in A}
\mathcal{A}^*=\mathcal{A}\backslash(1\pm\sigma)\R.
\end{equation}
\begin{lem}\label{invertible HA}
Let us define
$$\HA_+=\{\xi\in\HA:\ \sigma\xi=\xi\}\hspace{.5cm}\mbox{and}\hspace{.5cm}\HA_-=\{\xi\in\HA:\ \sigma\xi=-\xi\}.$$
The set of invertible elements of $\HA$ is
$${\HA}^{*}=\HA\backslash(\HA_+\cup \HA_-).$$
\end{lem}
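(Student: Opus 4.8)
The plan is to exploit the two central idempotents $e_{\pm}:=\frac{1\pm\sigma}{2}$ of $\mathcal{A}$, which satisfy $e_+^2=e_+$, $e_-^2=e_-$, $e_+e_-=e_-e_+=0$ and $e_++e_-=\unitquat$. Since $\sigma$ is central in $\HA$, so are $e_+$ and $e_-$; hence the decomposition $\xi=e_+\,\xi_++e_-\,\xi_-$ of (\ref{isom HA HH}) is not merely additive. A one-line computation using the relations above gives $(\xi\xi')_{\pm}=\xi_{\pm}\xi'_{\pm}$, so the map $\xi\mapsto(\xi_+,\xi_-)$ of (\ref{isom HA HH}) is a \emph{ring} isomorphism from $\HA$ onto $\HH\times\HH$ equipped with the component-wise product.

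First I would record, under this identification, what $\HA_+$ and $\HA_-$ become. Since $\sigma e_{\pm}=\pm e_{\pm}$, the operator $\sigma$ acts as $+1$ on the first factor and as $-1$ on the second; thus $\sigma\xi=\xi$ is equivalent to $\xi_-=0$, and $\sigma\xi=-\xi$ to $\xi_+=0$. Consequently $\HA_+=\{(\xi_+,0):\xi_+\in\HH\}$ and $\HA_-=\{(0,\xi_-):\xi_-\in\HH\}$, so $\HA_+\cup\HA_-$ corresponds precisely to the set of pairs $(\xi_+,\xi_-)$ having at least one zero coordinate.

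The conclusion is then immediate: $\HH$ is a division ring, so a pair $(\xi_+,\xi_-)\in\HH\times\HH$ is invertible for the component-wise product if and only if both $\xi_+\neq 0$ and $\xi_-\neq 0$, i.e. if and only if $(\xi_+,\xi_-)\notin\HA_+\cup\HA_-$ under the above identification. Transporting back through the ring isomorphism yields $\HA^{*}=\HA\setminus(\HA_+\cup\HA_-)$, as claimed; moreover the inverse of an invertible element is $e_+\,\xi_+^{-1}+e_-\,\xi_-^{-1}$, which makes the statement constructive.

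There is no genuine obstacle here; the only point deserving a moment's care is that the splitting (\ref{isom HA HH}) is multiplicative — which is exactly the centrality of $\sigma$ — together with the use of the fact that the quaternions form a division algebra. The companion statement (\ref{invertible in A}) for $\mathcal{A}$ itself follows by the same argument with $\HH$ replaced by $\R$.
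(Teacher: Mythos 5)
Your proof is correct, and it takes a genuinely different route from the paper. The paper works in coordinates: writing $\xi=\sum a_i$ with $a_i=u_i+\sigma v_i$, it computes $H(\xi,\xi)=\overline{\xi}\xi=\sum(u_i^2+v_i^2)+2\sigma\sum u_iv_i$, observes that $\xi$ is invertible iff $H(\xi,\xi)\in\mathcal{A}^*$ (the inverse being $\overline{\xi}/H(\xi,\xi)$), and then uses (\ref{invertible in A}) to reduce non-invertibility to $\sum(u_i-\varepsilon v_i)^2=0$, hence $\xi\in\HA_{\mp}$. You instead observe that the central idempotents $e_\pm=\frac{1\pm\sigma}{2}$ make the splitting (\ref{isom HA HH}) a \emph{ring} isomorphism $\HA\simeq\HH\times\HH$, identify $\HA_+$ and $\HA_-$ with the two coordinate ideals, and conclude from $\HH$ being a division ring. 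Your argument is shorter, coordinate-free, and makes the underlying structure visible — it also immediately subsumes the analogous statement (\ref{invertible in A}) for $\mathcal{A}$ itself. What the paper's computation buys is the explicit norm-form expression for $H(\xi,\xi)$ in real coordinates, which it reuses directly in Lemma \ref{lem square} to see that $H(\xi,\xi)$ lies in the interior of the cone $\mathcal{C}$; your proof supplies the inverse $e_+\xi_+^{-1}+e_-\xi_-^{-1}$ instead of $\overline{\xi}/H(\xi,\xi)$, which is equivalent but not what the later lemma references.
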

\begin{proof}
Let $\xi=a_0\unitquat+a_1I+a_2J+a_3K\in\HA,$ with $a_i=u_i+\sigma v_i,$ $u_i,v_i\in\R.$ A straightforward computation yields
\begin{equation}\label{H xi coord}
H(\xi,\xi)=\overline{\xi}\xi=\sum_{i=0}^{3}(u_i^2+v_i^2)+2\sigma\sum_{i=0}^3u_iv_i.
\end{equation}
We note that $\xi$ is invertible in $\HA$ if and only if $H(\xi,\xi)$ is invertible in $\mathcal{A}$ (the inverse of $\xi$ is then $\overline{\xi}/H(\xi,\xi)$). Thus $\xi$ is not invertible in $\HA$ if and only if
$$\sum_{i=0}^3(u_i^2+v_i^2)=2\varepsilon\sum_{i=0}^3u_iv_i,$$
with $\varepsilon=\pm 1$ (by (\ref{invertible in A})). This gives $\sum_i(u_i-\varepsilon v_i)^2=0,$ and thus $\xi\in \HA_+\cup \HA_-,$ since $\HA_+$ and $\HA_-$ are explicitly given by
$$\HA_+=\{(1+\sigma)q,\ q\in\HH\}\hspace{1cm}\mbox{and}\hspace{1cm}\HA_-=\{(1-\sigma)q,\ q\in\HH\}.$$
\end{proof}

\subsection{Square roots in the Lorentz numbers}

\begin{lem}\label{lem square}
Let $b\in\mathcal{A}.$ There exists $a\in\mathcal{A}$ such that
\begin{equation}\label{square}
a^2=b
\end{equation}
if and only if $b$ belongs to the cone 
$$\mathcal{C}=\{u+\sigma v\in\mathcal{A},\ u\geq 0,\ v\in\R:\ -u\leq v\leq u\};$$ 
moreover, equation (\ref{square}) has exactly four solutions if $b$ belongs to the interior of $\mathcal{C}.$ In particular, if $\xi\in\HA$ is invertible, then equation 
$$a^2=H(\xi,\xi)$$
has four solutions in $\mathcal{A},$ which are moreover invertible in $\mathcal{A}.$ 
\end{lem}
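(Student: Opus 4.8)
The plan is to reduce everything to the ring isomorphism $\mathcal{A}\simeq\R\oplus\R$ obtained by writing $a=\frac{1+\sigma}{2}a_++\frac{1-\sigma}{2}a_-$ (see (\ref{isom HA HH}) and the discussion following it), under which the algebra operations are componentwise; in particular $a^2=\frac{1+\sigma}{2}a_+^2+\frac{1-\sigma}{2}a_-^2$. Writing likewise $b=\frac{1+\sigma}{2}b_++\frac{1-\sigma}{2}b_-$, the equation $a^2=b$ is then equivalent to the pair of real equations $a_+^2=b_+$ and $a_-^2=b_-$, which is solvable exactly when $b_+\ge 0$ and $b_-\ge 0$; in that case the solutions are $a_\pm=\pm\sqrt{b_\pm}$, giving a priori four of them.

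The second step is to identify $\{b_+\ge 0,\ b_-\ge 0\}$ with the cone $\mathcal{C}$. Writing $b=u+\sigma v$ with $u,v\in\R$ one has $b_+=u+v$ and $b_-=u-v$, so $b_+\ge 0$ and $b_-\ge 0$ amounts exactly to $-u\le v\le u$ (which already forces $u\ge 0$), i.e. $b\in\mathcal{C}$. The four solutions $a_\pm=\pm\sqrt{b_\pm}$ are pairwise distinct precisely when $b_+>0$ and $b_->0$, that is when $b$ lies in the interior of $\mathcal{C}$; on the boundary one of $b_\pm$ vanishes, leaving two solutions (or a single one if $b=0$). This proves the first two assertions.

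For the last assertion, let $\xi\in\HA$ be invertible. By Lemma \ref{invertible HA}, $\xi\notin\HA_+\cup\HA_-$, so in the decomposition $\xi=\frac{1+\sigma}{2}\xi_++\frac{1-\sigma}{2}\xi_-$ with $\xi_+,\xi_-\in\HH$ both $\xi_+$ and $\xi_-$ are nonzero. Using that $H$ is $\mathcal{A}$-bilinear together with $\left(\frac{1\pm\sigma}{2}\right)^2=\frac{1\pm\sigma}{2}$ and $\frac{1+\sigma}{2}\cdot\frac{1-\sigma}{2}=0$, one gets $H(\xi,\xi)=\frac{1+\sigma}{2}\,H(\xi_+,\xi_+)+\frac{1-\sigma}{2}\,H(\xi_-,\xi_-)$, with $H(\xi_\pm,\xi_\pm)=|\xi_\pm|^2>0$; hence $H(\xi,\xi)$ is in the interior of $\mathcal{C}$ and $a^2=H(\xi,\xi)$ has four solutions $a=\frac{1+\sigma}{2}(\pm|\xi_+|)+\frac{1-\sigma}{2}(\pm|\xi_-|)$, each with both components nonzero and therefore invertible in $\mathcal{A}$ by (\ref{invertible in A}).

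There is no real obstacle here; the only points requiring care are the correct count of solutions (four in the interior, fewer on the boundary) and keeping the sign conventions straight when translating between the coordinates $(u,v)$ and $(b_+,b_-)$ — everything else is an immediate computation in $\R\oplus\R$.
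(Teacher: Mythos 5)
Your proof is correct, and it takes a slightly different route from the paper's. The paper works directly in the rectangular coordinates $a=x+\sigma y$, $b=u+\sigma v$: the equation $a^2=b$ becomes the real system $x^2+y^2=u$, $2xy=v$, which is solved explicitly by $x=\tfrac12(\varepsilon_1\sqrt{u+v}+\varepsilon_2\sqrt{u-v})$, $y=\tfrac12(\varepsilon_1\sqrt{u+v}-\varepsilon_2\sqrt{u-v})$, and for the last claim it invokes the coordinate formula $H(\xi,\xi)=\sum(u_i^2+v_i^2)+2\sigma\sum u_iv_i$ from the previous lemma. You instead pass to the idempotent (null) coordinates given by the ring isomorphism $\mathcal{A}\simeq\R\oplus\R$, which diagonalizes the multiplication and reduces everything to two independent real square-root equations $a_\pm^2=b_\pm$; the cone condition and the count of solutions then fall out immediately, and for the last claim you get $H(\xi,\xi)=\frac{1+\sigma}{2}|\xi_+|^2+\frac{1-\sigma}{2}|\xi_-|^2$ from the $\mathcal{A}$-bilinearity of $H$ and Lemma \ref{invertible HA}. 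Translating via $a_\pm=x\pm y$ shows the two solution sets agree, so the arguments are equivalent; yours is a bit more conceptual and makes the solution count transparent, while the paper's gives closed-form expressions directly in the $(x,y)$ chart.
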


\begin{proof}
Setting $a=x+\sigma y$ and $b=u+\sigma v,$ equation (\ref{square}) reads
$$x^2+y^2=u\hspace{1cm}\mbox{and}\hspace{1cm}2xy=v,$$
which is solvable if and only if $u\geq 0$ and $-u\leq v\leq u.$ If these conditions hold, the solutions are
$$x=\frac{1}{2}\left(\varepsilon_1\sqrt{u+v}+\varepsilon_2\sqrt{u-v}\right)\hspace{.5cm}\mbox{and}\hspace{.5cm}y=\frac{1}{2}\left(\varepsilon_1\sqrt{u+v}-\varepsilon_2\sqrt{u-v}\right),$$
where $\varepsilon_1,\varepsilon_2=\pm 1,$ and the first part of the lemma follows. For the last claim, we note that formula (\ref{H xi coord}) implies that $H(\xi,\xi)$ belongs to the interior of the cone $\mathcal{C}$ whenever it is invertible; the solutions are then moreover clearly invertible in $\mathcal{A}.$
\end{proof}


\begin{thebibliography}{}
\bibitem{Bay} P. Bayard, \textit{On the spinorial representation of surfaces into 4-dimensional Minkowski space}, to appear in J. Geometry and Physics.
\bibitem{BLR} P. Bayard, M.-A. Lawn and J. Roth, \textit{Spinorial representation of surfaces into 4-dimensional space forms}, to appear in Ann. Global Analysis and Geometry.
\bibitem{B} L. Bianchi, \textit{Sulle superficie a curvatura nulla in geometria ellittica}, Ann. Mat. Pura Appl. 24 (1896), 93-129.
\bibitem{CMMS} V. Cort\'es, C. Mayer, T. Mohaupt, F. Saueressig, \textit{Special Geometry of Euclidean Supersymmetry I: Vector Multiplets}, J. High Energy Phys. 03 (2004), 028.
\bibitem{CD} M. do Carmo and M. Dajczer, \textit{Local isometric immersions of $\R^2$ into $\R^4$}, J. reine angew. Math. 442 (1993), 205-219. 
\bibitem{DT} M. Dajczer and R. Tojeiro, \textit{On flat surfaces with flat normal bundle in space forms}, Houston Math. J. 21 (1995), 319-338.
\bibitem{E} K. Enomoto, \textit{Global properties of the Gauss image of flat surfaces in $\R^4$}, Kodai Math. J. 10 (1987), 272-284.
\bibitem{EKW} K. Enomoto, Y. Kitagawa, J. L. Weiner, \textit{A rigidity theorem for the Clifford tori in $S^3$}, Proc. Amer. Math. Soc. 124 (1996), 265-268.
\bibitem{GM} J. G\'alvez and P. Mira, \textit{Isometric immersions of $\R^2$ into $\R^4$ and perturbation of Hopf tori}, Math. Z. 266 (2010), 207-227.
\bibitem{HO} D. Hoffmann and R. Ossermann, \textit{The Gauss map of surfaces in $\R^n$}, J. Diff. Geometry 18 (1983), 733-754.
\bibitem{K} Y. Kitagawa, \textit{Periodicity of the asymptotic curves on flat tori in $S^3$}, J. Math. Soc. Japan 40 (1988), 457-476.
\bibitem{L} M.-A. Lawn, \textit{M\'ethodes spinorielles et g\'eom\'etrie para-complexe et para-quaternionique en th\'eorie des sous-vari\'et\'es}, Th\`ese de Doctorat, Universit\'e Henri Poincar\'e, Nancy-I (2006).
\bibitem{Mo} J.D. Moore, \textit{Isometric immersions of space forms in space forms}, Pacific J. Math. 40 (1972), 157-166.
\bibitem{Sa} S. Sasaki, \textit{On complete surfaces with Gaussian curvature zero in 3-sphere}, Coll. Math. 26 (1972), 165-174.
\bibitem{S} M. Spivak, \textit{A comprehensive introduction to differential geometry}, Vol. IV. Publish or Perish, Inc., Boston, Mass., 1975.
\bibitem{T} W. Thurston, \textit{Three dimensional geometry and topology}, Vol 1, Princeton mathematical series 35, Princeton University Press, 1997. 
\bibitem{W2} J.L. Weiner, \textit{The Gauss map for surfaces in 4-space}, Math. Ann. 269 (1984), 541-560.
\bibitem{W} J.L. Weiner, \textit{Flat tori in $S^3$ and their Gauss map}, Proc. London Math. Soc. 62:3 (1991), 54-76.
\bibitem{Weinstein} T. Weinstein, \textit{An introduction to Lorentz surfaces}, de Gruyter expositions in mathematics 22, Walter de Gruyter, 1996. 
\end{thebibliography}
\end{document}